%% LyX 2.1.0 created this file.  For more info, see http://www.lyx.org/.
%% Do not edit unless you really know what you are doing.
\documentclass[12pt,a4paper,american]{article}
\usepackage{amsmath}
\usepackage{amssymb}

\makeatletter

%%%%%%%%%%%%%%%%%%%%%%%%%%%%%% LyX specific LaTeX commands.

%%%%%%%%%%%%%%%%%%%%%%%%%%%%%% Textclass specific LaTeX commands.
\newcommand{\lyxaddress}[1]{
\par {\raggedright #1
\vspace{1.4em}
\noindent\par}
}

%%%%%%%%%%%%%%%%%%%%%%%%%%%%%% User specified LaTeX commands.
\usepackage{amsthm}

\usepackage{babel}

%\usepackage{refcheck}

% LAYOUT
\addtolength{\textwidth}{4em}
\addtolength{\hoffset}{-2em}
\addtolength{\textheight}{10ex}
\addtolength{\voffset}{-6ex}

% THEOREMS
\theoremstyle{plain}
\newtheorem{theorem}{Theorem}
  \theoremstyle{definition}
  
  \theoremstyle{remark}
  \newtheorem{remark}[theorem]{Remark}
  \theoremstyle{plain}
  \newtheorem{proposition}[theorem]{Proposition}
  \theoremstyle{plain}
  \newtheorem{lemma}[theorem]{Lemma}
  \theoremstyle{plain}
  \newtheorem{corollary}[theorem]{Corollary}
  \theoremstyle{definition}
  
  \theoremstyle{remark}
  \newtheorem*{remark*}{Remark}
  \theoremstyle{definition}
\newtheorem*{question*}{\it{QUESTION}}
%\newtheorem{theorem}{Theorem}
%\newtheorem{proposition}[theorem]{Proposition}
%\newtheorem{lemma}[theorem]{Lemma}
%\newtheorem{corollary}[theorem]{Corollary}

%\theoremstyle{remark}
%\newtheorem{remark}[theorem]{Remark}
%\newtheorem*{remark*}{Remark}
%\newtheorem*{remarks*}{Remarks}
%\newtheorem*{example}{Example}

%\theoremstyle{definition}
%\newtheorem{definition}[theorem]{Definition}
%\newtheorem*{definition*}{Definition}
%\newtheorem{assumption}{Assumption}
%\renewcommand\theassumption{\Roman{assumption}}

% FONTS
% mathscr
%\newcommand{\sH}{\mathscr{H}}
%\newcommand{\sL}{\mathscr{L}}
%\newcommand{\sI}{\mathscr{I}}
%\newcommand{\sF}{\mathscr{F}}
%\newcommand{\sS}{\mathscr{S}}
%\newcommand{\sB}{\mathscr{B}}
%\newcommand{\sM}{\mathscr{M}}
%\newcommand{\sD}{\mathscr{D}}

% mathcal
%\newcommand{\GG}{\mathcal{G}}
%\newcommand{\KK}{\mathcal{K}}

% OPERATORS

\newcommand{\Dom}{\mathop\mathrm{Dom}\nolimits}

\newcommand{\spec}{\mathop\mathrm{spec}\nolimits}

\renewcommand{\Re}{\mathop\mathrm{Re}\nolimits}
\renewcommand{\Im}{\mathop\mathrm{Im}\nolimits}
\newcommand{\supp}{\mathop\mathrm{supp}\nolimits}

\newcommand{\Res}{\mathop\mathrm{Res}\nolimits}
%\newcommand{\End}{\mathop\mathrm{End}\nolimits}
%\newcommand{\Lin}{\mathop\mathrm{Lin}\nolimits}

%\newcommand{\dd}{\mathrm{d}}

% SHORTCUTS
%\newcommand{\hs}[1]{\hspace{#1}}

\makeatother

\usepackage{babel}
\begin{document}

\title{The Nevanlinna parametrization for $q$-Lommel polynomials in the
indeterminate case}

\author{F.~\v{S}tampach$^{1}$, P.~\v{S}\v{t}ov\'\i\v{c}ek$^{2}$}

\date{{}}

\maketitle

\lyxaddress{$^{1}$Department of Applied Mathematics, Faculty of Information
Technology, Czech Technical University in~Prague, Kolejn\'\i~2,
160~00 Praha, Czech Republic}

\lyxaddress{$^{2}$Department of Mathematics, Faculty of Nuclear Science, Czech
Technical University in Prague, Trojanova 13, 12000 Praha, Czech Republic}
\begin{abstract}
\noindent The Hamburger moment problem for the $q$-Lommel polynomials
which are related to the Hahn-Exton $q$-Bessel function is known
to be indeterminate for a certain range of parameters. In this paper,
the Nevanlinna parametrization for the indeterminate case is provided
in an explicit form. This makes it possible to describe all
N-extremal measures of orthogonality. Moreover, a linear and quadratic
recurrence relation are derived for the moment sequence, and the asymptotic
behavior of the moments for large powers is obtained with the aid
of appropriate estimates.
\end{abstract}
\vskip\baselineskip\noindent\emph{ Keywords}: $q$-Lommel polynomials,
Nevanlinna parametrization, measure of orthogonality, moment sequence

\vskip0.5\baselineskip\noindent\emph{ 2010 Mathematical Subject
Classification}: 42C05, 33C47, 33D45

\section{Introduction}

The Lommel polynomials represent a class of orthogonal polynomials
known from the theory of Bessel functions. Several $q$-analogues
of the Lommel polynomials have been introduced and studied in \cite{KoelinkSwarttouw,KoelinkVanAssche,Koelink}.
One of the three commonly used $q$-analogues of the Bessel function
of the first kind is known as the Hahn-Exton $q$-Bessel function
(sometimes also called the third Jackson $q$-Bessel function or $_{1}\phi_{1}$
$q$-Bessel function). It is defined by
\begin{equation}
J_{\nu}(z;q)=z^{\nu}\,\frac{(q^{\nu+1};q)_{\infty}}{(q;q)_{\infty}}\,\,_{1}\phi_{1}(0;q^{\nu+1};q,qz^{2}).\label{eq:def_HE_q_Bessel}
\end{equation}
It is of importance that $J_{\nu}(z;q)$ satisfies the recurrence
relation
\[
J_{\nu+1}(z;q)-\left(z+\frac{1-q^{\nu}}{z}\right)J_{\nu}(z;q)+J_{\nu-1}(z;q)=0.
\]
By iterating this rule one arrives at the formula 
\begin{equation}
J_{\nu+n}(z;q)=h_{n,\nu}(z^{-1};q)J_{\nu}(z;q)-h_{n-1,\nu+1}(z^{-1};q)J_{\nu-1}(z;q)\label{eq:qLommel_qBessel}
\end{equation}
where $h_{m,\nu}(w;q)$ are polynomials in $q^{\nu}$ and Laurent
polynomials in $w$, see \cite{KoelinkVanAssche} for more details.
This is a familiar situation, with equation (\ref{eq:qLommel_qBessel})
being analogous to the well known relation between the Lommel polynomials
and the Bessel functions, cf. \cite[Chapter~9]{Watson}. Thus the
polynomials $h_{m,\nu}(w;q)$ can be referred to as the $q$-Lommel
polynomials.

On one hand, the polynomials $h_{n,\nu}(w;q)$ can be treated as orthogonal
Laurent polynomials in the variable $w$.  The corresponding orthogonality
relation has been described in \cite{KoelinkVanAssche}. On the other hand,
$h_{n,\nu}(w;q)$ are also orthogonal polynomials in the variable $q^{\nu}$. In
Theorem~3.6 and Corollary~3.7 in \cite{Koelink}, Koelink described a
corresponding measure of orthogonality. It turns out that the measure of
orthogonality is supported on the zeros of the Hahn-Exton $q$-Bessel function
considered as a function of the order $\nu$ . Moreover, the measure of
orthogonality is unique if $w^{-2}\leq q$ or $w^{-2}\geq q^{-1}$. For
$q<w^{-2}<q^{-1}$, however, the corresponding Hamburger moment problem is
indeterminate and so there exist infinitely many measures of
orthogonality. The measure described in \cite{Koelink} represents a Nevanlinna
(or N-) extremal solution of the indeterminate Hamburger moment problem, and
it can be seen to correspond to the Friedrichs extension of the underlying
Jacobi matrix operator.

Let us also remark that the $q$-Lommel polynomials admit another
interpretation in the framework of a birth and death process with
exponentially growing birth and death rates. More precisely, the birth
rate is supposed to be $\lambda_{n}=w^{-2}q^{-n}$ while the death
rate is $\mu_{n}=q^{-n}$ (or vice versa). See, for example, \cite{Ismail_etal}
for more information on the subject.

As already pointed out in \cite{Koelink}, it is of interest and in
fact a fundamental question to determine all possible measures of
orthogonality in terms of the Nevanlinna parametrization. An explicit
solution of this problem becomes the main goal of the current paper.
To achieve it we heavily rely on the knowledge of the generating function
for the $q$-Lommel polynomials. Having the Nevanlinna parametrization
at hand it is straightforward to describe all N-extremal measures
of orthogonality. The case when $w=1$ turns out to be somewhat special
and requires additional efforts though no new ideas are in principle
needed. To our best knowledge, formulas for this particular case have
been omitted in the past research works on the $q$-Lommel polynomials.

The measures of orthogonality we are going to describe are necessarily
discrete. To reveal a bit their structure we have a closer look at the
asymptotics of the mass points and the corresponding weights of such a
measure. To this end, we make use of some known results concerned with the
asymptotic behavior of the roots of the $q$-Bessel functions. A brief summary
of basic facts and references on this subject is provided in
Appendix. Moreover, we were able to complete these facts with some additional
details.

Furthermore, we pay some attention to the sequence of moments related to the
$q$-Lommel polynomials. By Favard's theorem, the moments are uniquely
determined by the coefficients in the recurrence relation for the $q$-Lommel
polynomials and otherwise they are independent of a particular choice of the
measure of orthogonality in the indeterminate case. It does not seem that the
moment sequence can be found explicitly.  We provide at least a linear and
quadratic recurrence relation for it and describe qualitatively its asymptotic
behavior for large powers.

Let us note that throughout the whole paper the parameter $q$ is
assumed to satisfy $0<q<1$. Furthermore, as far as the basic (or
$q$-) hypergeometric series are concerned, as well as other $q$-symbols
and functions, we follow the notation of Gasper and Rahman \cite{GasperRahman}.

\section{The Nevanlinna functions for $q$-Lommel polynomials}

\subsection{The $q$-Lommel polynomials}

In the current paper we prefer to work directly with the $\,_{1}\phi_{1}$
basic hypergeometric function and do not insist on its interpretation
as the $q$-Bessel function in accordance with (\ref{eq:def_HE_q_Bessel}).
This leads us to using a somewhat modified notation if compared to
that usually used in connection with $q$-Bessel functions, for instance,
in \cite{Koelink}. Moreover, the notation used in this paper may
stress some similarity of the Hamburger moment problem for the $q$-Lommel
polynomials with the same problem for the Al-Salam-Carlitz II polynomials.
The Hamburger moment problem is actually known to be indeterminate
for particular values of parameters in both cases but there are also
some substantial differences, see \cite[Section~4]{BergValent}.

Thus we write $a>0$ instead of $w^{-2}$ and $x\in\mathbb{C}$ instead
of $q^{\nu}$. The basic recurrence relation we are going to study,
defining a sequence of monic orthogonal polynomials $\{F_{n}(a,q;x)\}_{n=0}^{\infty}$
(in the variable $x$ and depending on two parameters $a$ and $q$),
reads
\begin{equation}
u_{n+1}=\big(x-(a+1)q^{-n}\big)u_{n}-aq^{-2n+1}u_{n-1},\ \ n\in\mathbb{Z}_{+}\label{eq:recur_def_F_n}
\end{equation}
($\mathbb{Z}_{+}$ standing for nonnegative integers). As usual, the
initial conditions are imposed in the form $F_{-1}(a,q;x)=0$ and
$F_{0}(a,q;x)=1$. In order to be able to compare some results derived
below with the already known results on the $q$-Lommel polynomials
let us remark that the $q$-Lommel polynomials $h_{n,\nu}(w;q)$ introduced
in (\ref{eq:qLommel_qBessel}) are related to the monic polynomials
$F_{n}(a,q;x)$ by the formula
\[
h_{n,\nu}(w;q)=(-1)^{n}w^{n}q^{n(n-1)/2}F_{n}(w^{-2},q;q^{\nu}).
\]

From (\ref{eq:recur_def_F_n}) one immediately deduces the symmetry
property
\[
a^{n}F_{n}(a^{-1},q;x)=F_{n}(a,q;ax),\quad n\in\mathbb{Z}_{+}.
\]
This suggests that one can restrict values of the parameter $a$ to
the interval $0<a<1$. We usually try, however, to formulate our results
for both cases, $a<1$ and $a>1$, for the sake of completeness. The
case $a=1$ is somewhat special and should be treated separately.

Letting 
\begin{equation}
G_{n}(a,q;x)=q^{1-n}F_{n-1}(a,q;qx),\ \ n\in\mathbb{Z}_{+},\label{eq:G_rel_F}
\end{equation}
we get a second linearly independent solution of (\ref{eq:recur_def_F_n}),
a sequence of monic polynomials $\{G_{n}(a,q;x)\}$ fulfilling the
initial conditions $G_{0}(a,q;x)=0$ and $G_{1}(a,q;x)=1$. Normalizing
the monic polynomials $F_{n}(a,q;x)$ we get an orthonormal polynomial
sequence $\{P_{n}(a,q;x)\}_{n=0}^{\infty}$. Explicitly, 
\begin{equation}
  P_{n}(a,q;x)=a^{-n/2}q^{n^{2}/2}F_{n}(a,q;x),\ \ n\in\mathbb{Z}_{+}.
  \label{eq:P_n_rel_F_n}
\end{equation}
The polynomials of the second kind, $Q_{n}(a,q;x)$, are related to
the monic polynomials $G_{n}(a,q;x)$ by a similar equality,
\begin{equation}
Q_{n}(a,q;x)=a^{-n/2}q^{n^{2}/2}G_{n}(a,q;x),\ \ n\in\mathbb{Z}_{+},\label{eq:Qn_rel_Gn}
\end{equation}
and obey the initial conditions $Q_{0}(a,q;x)=0$, $Q_{1}(a,q;x)=\sqrt{q/a}$.

Note that polynomials $P_{n}(a,q;x)$ solve the second-order difference
equation 
\[
\sqrt{a}q^{-n+1/2}v_{n-1}+\big((a+1)q^{-n}-x\big)v_{n}+\sqrt{a}q^{-n-1/2}v_{n+1}=0,\ \ n\in\mathbb{Z}_{+},
\]
with the initial conditions $P_{-1}(a,q;x)=0$ and $P_{0}(a,q;x)=1$.
Denote by $\alpha_{n}$ and $\beta_{n}$ the coefficients in this
difference equation,
\begin{equation}
  \alpha_{n}=a^{1/2}q^{-n-1/2},\ \beta_{n}=(a+1)q^{-n},\ \ n\in\mathbb{Z}_{+}.
  \label{eq:alpha_beta}
\end{equation}
The difference equation can be interpreted as the formal eigenvalue
equation for the Jacobi matrix 
\begin{equation}
J=J(a,q)=\begin{pmatrix}\beta_{0} & \alpha_{0}\\
\alpha_{0} & \beta_{1} & \alpha_{1}\\
 & \alpha_{1} & \beta_{2} & \alpha_{2}\\
 &  & \ddots & \ddots & \ddots
\end{pmatrix}\!.
\label{eq:J}
\end{equation}
Then $(P_{0}(x),P_{1}(x),P_{2}(x),\ldots)$ is a formal eigenvector
(where $P_{j}(x)\equiv P_{j}(a,q;x)$). Let us emphasize that $J$
is positive on the subspace in $\mathbb{\ell}^{2}(\mathbb{Z}_{+})$
formed by sequences with only finitely many nonzero entries, i.e.
on the linear hull of the canonical basis in $\mathbb{\ell}^{2}(\mathbb{Z}_{+})$.
Actually, it is not difficult to verify that for every $N\in\mathbb{Z}_{+}$
and $\xi\in\mathbb{R}^{N+1}$,
\begin{equation}
\sum_{n=0}^{N}\beta_{n}\xi_{n}^{\,2}+2\sum_{n=0}^{N-1}\alpha_{n}\xi_{n}\xi_{n+1}=a\xi_{0}^{\,2}+q^{-N}\xi_{N}^{\,2}+\sum_{n=0}^{N-1}q^{-n}\left(\!\left(\frac{a}{q}\right)^{\!1/2}\xi_{n+1}+\xi_{n}\right)^{\!2}\geq0.\label{eq:quadraticform_J}
\end{equation}

Recurrence (\ref{eq:recur_def_F_n}) can be solved explicitly in the
particular case when $x=0$. One finds that
\[
F_{n}(a,q;0)=(-1)^{n}q^{-n(n-1)/2}\,\frac{1-a^{n+1}}{1-a},\ G_{n}(a,q;0)=(-1)^{n+1}q^{-n(n-1)/2}\,\frac{1-a^{n}}{1-a},
\]
for $n\in\mathbb{Z}_{+}$ and $a\neq1$. Consequently,
\begin{equation}
P_{n}(a,q;0)=(-1)^{n}q^{n/2}a^{-n/2}\,\frac{1-a^{n+1}}{1-a},\ Q_{n}(a,q;0)=(-1)^{n+1}q^{n/2}a^{-n/2}\,\frac{1-a^{n}}{1-a}.\label{eq:P_n_and_Q_n_x=00003D0}
\end{equation}
The quantities $P_{n}(1,q;0)$ and $Q_{n}(1,q;0)$ can be obtained
from (\ref{eq:P_n_and_Q_n_x=00003D0}) in the limit $a\rightarrow1$,
\[
P_{n}(1,q;0)=(-1)^{n}q^{n/2}(n+1),\ Q_{n}(1,q;0)=(-1)^{n+1}q^{n/2}n.
\]

\subsection{The generating function}

A formula for the generating function for the $q$-Lommel polynomials
has been derived in \cite[Eq. (4.22)]{KoelinkSwarttouw}. Here we
reproduce the formula and provide its proof since it is quite crucial
for the computations to follow of the Nevanlinna functions $A$, $B$,
$C$, and $D$.

\begin{proposition}
\label{thm:generate_fce}
Let $a>0$. The generating function for the polynomials $F_{n}(a,q;x)$ equals
\begin{equation}
  \sum_{n=0}^{\infty}q^{n(n-1)/2}F_{n}(a,q;x)(-t)^{n}
  =\sum_{k=0}^{\infty}\frac{q^{k(k-1)/2}(-xt)^{k}}{(t;q)_{k+1}(at;q)_{k+1}}
  =\frac{\,_{2}\phi_{2}(q,0;qt,qat;q,xt)}{(1-t)(1-at)}
  \label{eq:gener_func_F_n}
\end{equation}
where $|t|<\min(1,a^{-1})$.
\end{proposition}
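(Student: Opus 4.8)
The plan is to verify the two claimed equalities separately. The second equality --- rewriting the sum over $k$ as a $\,_2\phi_2$ divided by $(1-t)(1-at)$ --- is a matter of unwinding the $q$-Pochhammer symbols: factor $(t;q)_{k+1}=(1-t)(qt;q)_k$ and $(at;q)_{k+1}=(1-at)(qat;q)_k$ out of the denominator, so that $\sum_{k\ge0}q^{k(k-1)/2}(-xt)^k/((t;q)_{k+1}(at;q)_{k+1})$ becomes $((1-t)(1-at))^{-1}\sum_{k\ge0}(q;q)_k\,q^{k(k-1)/2}(xt)^k/((q;q)_k(qt;q)_k(qat;q)_k)$. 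Matching this against the standard definition of $\,_r\phi_s$ from Gasper--Rahman, with the convention that a $\,_2\phi_2$ carries the extra factor $\big((-1)^k q^{k(k-1)/2}\big)^{1+s-r}$, one checks that $s-r=0$ here gives exactly one such factor $(-1)^kq^{k(k-1)/2}$, which combines with $(-xt)^k$ to produce $q^{k(k-1)/2}(xt)^k$ up to sign bookkeeping; the numerator parameters are $q$ and $0$, the denominator parameters $qt$ and $qat$, and the argument is $xt$. This is purely formal and I would dispatch it in one or two lines.

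The substantive claim is the first equality, $\sum_{n\ge0}q^{n(n-1)/2}F_n(a,q;x)(-t)^n=\sum_{k\ge0}q^{k(k-1)/2}(-xt)^k/((t;q)_{k+1}(at;q)_{k+1})$. I would prove it by showing that the right-hand side, call it $\Phi(t)$, satisfies the same functional equation in $t$ that the recurrence (\ref{eq:recur_def_F_n}) imposes on the left-hand side, together with matching initial data. Concretely, multiply (\ref{eq:recur_def_F_n}) by $q^{n(n-1)/2}(-t)^{n+1}$ and sum over $n\in\mathbb{Z}_+$; using $q^{n(n-1)/2}=q^{(n+1)n/2}q^{-n}$ and $q^{n(n-1)/2}=q^{(n-1)(n-2)/2}q^{n-1}$ to realign indices, the three terms of the recurrence turn into a relation of the form $\Psi(t)=x t\,\Psi(q t)+\big(\text{terms from }(a+1)q^{-n}\text{ and }aq^{-2n+1}\big)$ where $\Psi(t)=\sum_n q^{n(n-1)/2}F_n(-t)^n$. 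The powers $q^{-n}$ and $q^{-2n}$ are exactly what is needed so that, after the substitution $t\mapsto qt$ (respectively $t\mapsto q^2 t$) inside the generating series, every explicit $q$-power is absorbed; the upshot is a first-order $q$-difference equation expressing $\Psi(t)$ in terms of $\Psi(qt)$ with rational coefficients in $t$ having poles governed by $(1-t)$ and $(1-at)$. One then checks that $\Phi(t)$ solves the identical $q$-difference equation --- this is a direct term-by-term computation on the series $\sum_k q^{k(k-1)/2}(-xt)^k/((t;q)_{k+1}(at;q)_{k+1})$, using $(qt;q)_{k}=(t;q)_{k+1}/(1-t)$ and the shift $k\mapsto k-1$ --- and that $\Phi(0)=1=F_0$, which pins down the solution uniquely among formal power series.

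The main obstacle is bookkeeping: getting the three exponent shifts in the sum-over-$n$ manipulation to line up so that the recurrence collapses into a clean first-order $q$-difference equation, and then matching that equation precisely against the one satisfied by $\Phi$. In particular one must be careful with the boundary terms at $n=0$ (where $F_{-1}=0$ but the index-shifted sums start at different places) so that the initial condition $F_0=1$ is correctly incorporated rather than producing a spurious inhomogeneous term. An alternative, perhaps cleaner, route is to expand $\Phi(t)$ directly: write $1/((t;q)_{k+1}(at;q)_{k+1})=\sum_{i,j\ge0}\binom{i+k}{k}_{q}\!\cdots$ --- more precisely use the $q$-binomial theorem $1/(t;q)_{k+1}=\sum_{i\ge0}\binom{k+i}{i}_q t^i$ --- multiply the two such expansions, collect the coefficient of $t^n$, and verify that the resulting finite sum over $k$ equals $q^{n(n-1)/2}F_n(a,q;x)(-1)^n$ by checking it satisfies (\ref{eq:recur_def_F_n}) in $n$. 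Either way the verification is elementary once the correct $q$-difference or recurrence identity is set up; I would present the $q$-difference-equation argument as the primary proof since it most transparently explains why the shifts $q^{-n}$, $q^{-2n+1}$ in the recurrence match the structure of the generating function, and the finiteness and convergence for $|t|<\min(1,a^{-1})$ follows from the geometric-type bound on $F_n$ implicit in (\ref{eq:recur_def_F_n}).
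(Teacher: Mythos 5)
Your proposal is correct and is essentially the paper's own argument: both rest on the first-order $q$-difference equation $(1-t)(1-at)V(t)=1-xtV(qt)$ for the explicit series and on the observation that, for a power series written as $\sum_{n}u_{n}q^{n(n-1)/2}(-t)^{n}$, this equation is equivalent to the three-term recurrence (\ref{eq:recur_def_F_n}) with $u_{0}=1$, the second equality in (\ref{eq:gener_func_F_n}) being, as you say, just the definition of $_{2}\phi_{2}$. The only difference is the direction in which you run the uniqueness argument (recurrence $\Rightarrow$ $q$-difference equation for the generating series, rather than expanding the explicit solution and reading off the recurrence), which is immaterial.
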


\begin{proof} The last equality in (\ref{eq:gener_func_F_n}) is
obvious from the definition of the basic hypergeometric function.
Suppose $a$ and $x$ being fixed and put
\[
V(t)=\sum_{k=0}^{\infty}\frac{q^{k(k-1)/2}(-xt)^{k}}{(t;q)_{k+1}(at;q)_{k+1}}\,.
\]
$V(t)$ is a well defined analytic function for $|t|<\min(1,a^{-1})$
which is readily seen to satisfy the $q$-difference equation
\begin{equation}
(1-t)(1-at)V(t)=1-xtV(qt).\label{eq:q-diff_V}
\end{equation}
Writing the power series expansion of $V(t)$ at $t=0$ in the form
\[
V(t)=\sum_{n=0}^{\infty}u_{n}q^{n(n-1)/2}(-t)^{n}
\]
and inserting the series into (\ref{eq:q-diff_V}) one finds that
the coefficients $u_{n}$ obey the recurrence (\ref{eq:recur_def_F_n})
and the initial conditions $u_{0}=1$, $u_{1}=-1-a+x$. Necessarily,
$u_{n}=F_{n}(a,q;x)$ for all $n\in\mathbb{Z}_{+}$. \end{proof}

In \cite[Section~4]{KoelinkSwarttouw} and particularly in \cite[Eq. (2.6)]{Koelink}
there is stated an explicit formula for the polynomials $F_{n}(a,q;x)$,
namely
\[
F_{n}(a,q;x)=(-1)^{n}q^{-n(n-1)/2}\sum_{j=0}^{n}\frac{q^{jn}(q^{-n};q)_{j}}{(q;q)_{j}}\,\,_{2}\phi_{1}(q^{j-n},q^{j+1};q^{-n};q,q^{-j}a)\, x^{j}.
\]
Let us restate this formula as an immediate corollary of Proposition~\ref{thm:generate_fce}.

\begin{corollary} The polynomials $F_{n}(a,q;x)$, $n\in\mathbb{Z}_{+}$,
can be expressed explicitly as follows
\begin{equation}
F_{n}(a,q;x)=(-1)^{n}q^{-n(n-1)/2}\sum_{j=0}^{n}\frac{(-1)^{j}q^{j(j-1)/2}}{(q;q)_{j}^{\,2}}\!\left(\,\sum_{k=0}^{n-j}(q^{k+1};q)_{j}(q^{n-j-k+1};q)_{j}\, a^{k}\!\right)\! x^{j}.\label{eq:F_n_explicit}
\end{equation}
\end{corollary}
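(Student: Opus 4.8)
The plan is to extract the coefficient of $x^{j}$ on both sides of the first equality in~\eqref{eq:gener_func_F_n}, treating it as an identity of power series in the single variable $t$. On the left the coefficient of $x^j$ in $\sum_n q^{n(n-1)/2}F_n(a,q;x)(-t)^n$ is, by definition, $\sum_{n\ge j} q^{n(n-1)/2}[x^j]F_n(a,q;x)\,(-t)^n$; so it suffices to identify the $t$-expansion of the $x^j$-homogeneous part of the right-hand side and then read off the coefficient of $t^n$. First I would write
\[
\frac{q^{k(k-1)/2}(-xt)^{k}}{(t;q)_{k+1}(at;q)_{k+1}}
\]
and observe that only the $k=j$ term contributes a factor $x^{j}$, so the $x^j$-part of $V(t)$ equals $(-1)^j q^{j(j-1)/2} t^{j} / \bigl[(t;q)_{j+1}(at;q)_{j+1}\bigr]$.

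The core computation is therefore a partial-fraction / $q$-binomial expansion of $1/\bigl[(t;q)_{j+1}(at;q)_{j+1}\bigr]$ as a power series in $t$. The standard tool is the $q$-binomial theorem in the form $1/(t;q)_{m+1}=\sum_{r\ge 0}\binom{r+m}{m}_q t^r$, equivalently $1/(t;q)_{j+1}=\sum_{r\ge0} t^r/\bigl[(q;q)_r\cdot(\text{something})\bigr]$; applying it to each of the two factors and forming the Cauchy product gives a double sum whose $t^{m}$-coefficient is a sum over $k$ of a product of two Gauss $q$-binomial coefficients times $a^k$. The bookkeeping step is to recognize that the Gauss binomial $\binom{r+j}{j}_q$ can be rewritten, after clearing the normalization $1/(q;q)_j^2$, as $(q^{k+1};q)_j$ for the appropriate index $k$; concretely $\binom{k+j}{j}_q=(q^{k+1};q)_j/(q;q)_j$, and similarly for the second factor with the complementary index $n-j-k$. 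Matching the shift $n = m + j$ from the prefactor $t^{j}$ then produces exactly the inner sum $\sum_{k=0}^{n-j}(q^{k+1};q)_j (q^{n-j-k+1};q)_j\, a^k$ displayed in~\eqref{eq:F_n_explicit}, and the overall prefactors $(-1)^n q^{-n(n-1)/2}$ and $(-1)^j q^{j(j-1)/2}/(q;q)_j^2$ come from combining the $(-t)^n$ on the left, the $(-xt)^j$ and $q^{j(j-1)/2}$ from the $k=j$ term, and the normalizations.

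I expect the only real obstacle to be clerical: keeping the two $q$-binomial expansions, the Cauchy-product index, and the index shift $n\mapsto n-j$ straight so that the powers of $q$ collect correctly into $q^{-n(n-1)/2}$ and $q^{j(j-1)/2}$, and verifying the $(-1)$ signs. There is no analytic subtlety—convergence for $|t|<\min(1,a^{-1})$ is already guaranteed by Proposition~\ref{thm:generate_fce}, and coefficient extraction is legitimate there—so the write-up reduces to a careful but routine manipulation of $q$-series. As a sanity check I would verify the identity at $j=0$ (it should reproduce the closed form $F_n(a,q;0)=(-1)^n q^{-n(n-1)/2}(1-a^{n+1})/(1-a)$ already recorded in the text, since $\sum_{k=0}^{n}a^k$ appears) and at $n=j$ (where the inner sum collapses to the single term $k=0$ with value $(q;q)_j\cdot 1$, giving the leading coefficient $1$ of the monic polynomial).
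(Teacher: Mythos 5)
Your proposal is correct and is essentially the paper's own argument: equate coefficients in the generating function \eqref{eq:gener_func_F_n}, expanding $1/(t;q)_{j+1}$ and $1/(at;q)_{j+1}$ by the $q$-binomial formula, forming the Cauchy product, and using $(q^{j+1};q)_k/(q;q)_k=(q^{k+1};q)_j/(q;q)_j$ to collect the normalization $1/(q;q)_j^{\,2}$. Only a trivial slip in your $n=j$ sanity check: there the inner sum equals $(q;q)_j\,(q^{1};q)_j=(q;q)_j^{\,2}$ (not $(q;q)_j\cdot1$), which is exactly what yields leading coefficient $1$.
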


\begin{proof} The formula can be derived by equating the coefficients
of equal powers of $t$ in (\ref{eq:gener_func_F_n}). To this end,
one has to apply the $q$-binomial formula 
\[
\frac{1}{(z;q)_{k}}={}_{1}\phi_{0}(q^{k};\,;q;z)=\sum_{n=0}^{\infty}\frac{(q^{k};q)_{n}}{(q;q)_{n}}\, z^{n},\ \ |z|<1,
\]
cf. \cite[Eq. (II.3)]{GasperRahman}. \end{proof}

\subsection{\label{sec:general_nevan} The indeterminate case and the Nevanlinna
parametrization}

We are still assuming that $a$ is positive. In \cite[Lemma~3.1]{Koelink} it is
proved that the Hamburger moment problem for the orthogonal polynomials
$F_{n}(a,q;x)$ (or $P_{n}(a,q;x)$) is indeterminate if and only if
$q<a<q^{-1}$. This is, however, clear from formulas
(\ref{eq:P_n_and_Q_n_x=00003D0}) and from the well known criterion
(cf. Addenda and Problems 10. to Chapter~2 in \cite{Akhiezer}) according to
which the Hamburger moment problem is indeterminate if and only if
\[
\sum_{n=0}^{\infty}\left(P_{n}(a,q;0)^{2}+Q_{n}(a,q;0)^{2}\right)<\infty.
\]
This also means that the Jacobi matrix operator $J$ defined in (\ref{eq:J}),
(\ref{eq:alpha_beta}), with $\Dom J$ equal to the linear hull of
the canonical basis in $\ell^{2}(\mathbb{Z}_{+})$, is not essentially
self-adjoint if and only if $a\in(q,q^{-1})$. In this case, the deficiency indices 
of $J$ are $(1,1)$, see~\cite[Chapter~4]{Akhiezer}.

Hence for $q<a<q^{-1}$ there exist infinitely many distinct measures
of orthogonality parametrized with the aid of the Nevanlinna functions
$A$, $B$, $C$, and $D$,
\begin{eqnarray*}
A(z)=z\sum_{n=0}^{\infty}Q_{n}(0)Q_{n}(z),\hskip18pt &  & B(z)=-1+z\sum_{n=0}^{\infty}Q_{n}(0)P_{n}(z),\\
C(z)=1+z\sum_{n=0}^{\infty}P_{n}(0)Q_{n}(z), &  & D(z)=z\sum_{n=0}^{\infty}P_{n}(0)P_{n}(z),
\end{eqnarray*}
where $P_{n}$ and $Q_{n}$ are the polynomials of the first and second
kind, respectively \cite{Akhiezer,ShohatTamarkin}. All these Nevanlinna
functions are entire and
\begin{equation}
A(z)D(z)-B(z)C(z)=1,\ \ \forall z\in\mathbb{C}.\label{eq:AD-BC=00003D1}
\end{equation}
According to the Nevanlinna theorem, all measures of orthogonality
$\mu_{\varphi}$ for which the set $\{P_{n};\, n\in\mathbb{Z}_{+}\}$
is orthonormal in $L^{2}(\mathbb{R},\mbox{d}\mu_{\varphi})$, are
in one-to-one correspondence with functions $\varphi$ belonging to
the one-point compactification $\mathcal{P}\cup\{\infty\}$ of the
space of Pick functions $\mathcal{P}$. Recall that Pick functions
are defined and holomorphic on the open complex halfplane $\Im z>0$,
with values in the closed halfplane $\Im z\geq0$. The correspondence
is established by identifying the Stieltjes transform of the measure
$\mu_{\varphi}$,
\begin{equation}
\int_{\mathbb{R}}\frac{\mbox{d}\mu_{\varphi}(x)}{z-x}=\frac{A(z)\varphi(z)-C(z)}{B(z)\varphi(z)-D(z)}\,,\ \ z\in\mathbb{C}\setminus\mathbb{R}.\label{eq:Nevan_param_Stieltj_transf}
\end{equation}

By a theorem due to M.~Riesz, $\{P_{n};\, n\in\mathbb{Z}_{+}\}$ is an
orthonormal basis in $L^{2}(\mathbb{R},\mbox{d}\mu_{\varphi})$ if and only if
$\varphi=t$ is a constant function with $t\in\mathbb{R}\cup\{\infty\}$
\cite[Theorem~2.3.3]{Akhiezer}. Then the measure $\mu_{t}$ is said to be
N-extremal. Moreover, the N-extremal measures $\mu_{t}$ are in one-to-one
correspondence with the self-adjoint extensions $T_{t}$ of the Jacobi operator
$J$ mentioned above. In more detail, if $E_{t}$ is the spectral measure of
$T_{t}$ and $e_{0}$ is the first vector of the canonical basis in
$\ell^{2}(\mathbb{Z}_{+})$ then
$\mu_{t}=\langle e_{0},E_{t}(\cdot)e_{0}\rangle$
\cite[Chapter~4]{Akhiezer}. The operators $T_{t}$ in the indeterminate case
are known to have a compact resolvent. Hence any N-extremal measure $\mu_{t}$
is purely discrete and supported on $\spec T_{t}$.

On the other hand, referring to (\ref{eq:Nevan_param_Stieltj_transf}),
the support of $\mu_{t}$ is also known to be equal to the zero set
\begin{equation}
\mathfrak{Z}_{t}=\{x\in\mathbb{R};\, B(x)t-D(x)=0\}\label{eq:Zero_t}
\end{equation}
\cite[Section~2.4]{Akhiezer}. Hence
\begin{equation}
\mu_{t}=\sum_{x\in\mathfrak{Z}_{t}}\rho(x)\delta_{x}\label{eq:mu_t}
\end{equation}
where $\rho(x)=\mu_{t}(\{x\})$ and $\delta_{x}$ is the Dirac measure
supported on $\{x\}$. Equation (\ref{eq:Nevan_param_Stieltj_transf}),
with $\varphi=t$, is nothing but the Mittag-Leffler expansion of
the meromorphic function on the right-hand side,
\[
\sum_{x\in\mathfrak{Z}_{t}}\frac{\rho(x)}{z-x}=\frac{A(z)t-C(z)}{B(z)t-D(z)}\,,
\]
 cf. \cite[footnote on p.~55]{Akhiezer}. From here it can be deduced
that
\begin{equation}
\rho(x)=\Res_{z=x}\frac{A(z)t-C(z)}{B(z)t-D(z)}=\frac{A(x)t-C(x)}{B'(x)t-D'(x)}=\frac{1}{B'(x)D(x)-B(x)D'(x)}\label{eq:rho}
\end{equation}
since, for $x\in\mathfrak{Z}_{t}$, $t=D(x)/B(x)$.

It should be noted that we are dealing with the Stieltjes case for
the matrix operator $J$ is positive on its domain of definition,
see (\ref{eq:quadraticform_J}). This means that, for any choice of
parameters from the specified range, there always exists a measure
of orthogonality with its support contained in $[0,+\infty)$. In
particular, if $a\in(q,q^{-1})$ then at least one of the measures
of orthogonality is supported by $[0,+\infty)$. From \cite[Lemma~1]{Chihara68}
it is seen that the limit
\begin{equation}\label{eq:def_alpha}
\lim_{n\to\infty}\,\frac{P_{n}(0)}{Q_{n}(0)}=\alpha\in(-\infty,0\,]
\end{equation}
exists. And, as explained in \cite[Remark~2.2.2]{BergValent}, an N-extremal
measure of orthogonality $\mu_{t}$ is supported by $[\,0,\infty)$ if and only
if $t\in[\,\alpha,0\,]$, the Stieltjes moment problem is determinate for
$\alpha=0$ and indeterminate for $\alpha<0$. Let us note that $\mu_{0}$ is the
unique N-extremal measure for which $0$ is a mass point.

In our case, making once more use of the explicit form (\ref{eq:P_n_and_Q_n_x=00003D0}),
we have 
\begin{equation}
\alpha=\lim_{n\rightarrow\infty}\,\frac{P_{n}(a,q;0)}{Q_{n}(a,q;0)}=\begin{cases}
-1, & \mbox{ if }a\in(0,1],\\
-a, & \mbox{ if }a>1.
\end{cases}\label{eq:quant_alpha}
\end{equation}
Hence the Stieltjes problem is indeterminate for any value $a\in(q,q^{-1})$.

The self-adjoint operator $T_{\alpha}$ corresponding to the N-extremal
measure $\mu_{\alpha}$ is nothing but the Friedrichs extension of
$J$ \cite[Proposition~3.2]{Pedersen}. The parameter $\alpha$ can
also be computed in the limit
\[
\alpha=\lim_{x\to-\infty}\frac{D(x)}{B(x)}\,,
\]
and by inspection of the function $D(x)/B(x)$ one finds that $\mu_{t}$
has exactly one negative mass point if $t\notin[\,\alpha,0\,]$ including
$t=\infty$ \cite[Lemma~2.2.1]{BergValent}. It is known, too, that
Markov's theorem applies in the indeterminate Stieltjes case meaning
that
\begin{equation}
\lim_{n\to\infty}\,\frac{Q_{n}(z)}{P_{n}(z)}=\frac{A(z)\alpha-C(z)}{B(z)\alpha-D(z)}\,,\ z\in\text{\ensuremath{\mathbb{C}}}\setminus\supp(\mu_{\alpha})\label{eq:lim_Qnz_over_Pnz}
\end{equation}
\cite[Theorem~2.1]{Berg}. In addition, in the same case, one has
the limit
\begin{equation}
\lim_{n\to\infty}\,\frac{P_{n}(z)}{Q_{n}(0)}=D(z)-B(z)\alpha,\ z\in\text{\ensuremath{\mathbb{C}}},\label{eq:eq:lim_Pnz_over_Qn0}
\end{equation}
as derived in \cite{Chihara68} and also in \cite{Pedersen}.

Further we wish to recall yet another interesting application of the
Nevanlinna functions. It is shown in \cite{BuchwalterCassier} that
the reproducing kernel can be expressed in terms of functions $B(z)$
and $D(z)$,
\begin{equation}
K(u,v):=\sum_{n=0}^{\infty}P_{n}(u)P_{n}(v)=\frac{B(u)D(v)-D(u)B(v)}{u-v}\,,\label{eq:repro_kernel}
\end{equation}
see also \cite[Section~1]{Berg95}.

Finally, let us note that Krein considered a slightly different
parametrization of the set of solutions to an indeterminate Stieltjes moment
problem, see \cite[Chapter~V, \S 5]{KreinNudelman}. The Krein parametrization
uses four entire functions $a$, $b$, $c$, and $d$ which can be expressed in
terms of the Nevanlina functions,
\begin{eqnarray}
  && a(z)=A(-z)-\frac{C(-z)}{\alpha},\qquad
  b(z)=-B(-z)+\frac{D(-z)}{\alpha}, \nonumber\\
  && c(z)=C(-z), \hskip 75pt d(z)=-D(-z), 
  \label{eq:Krein_abcd} 
\end{eqnarray}
with $\alpha$ being defined in (\ref{eq:def_alpha}); see also
\cite[Section~5]{Berg95}. The solutions $\mu_\sigma$ are in one-to-one
correspondence with functions $\sigma$ from the one-point compactification of
the Stieltjes class $\mathcal{S}^{-}$, see \cite{KreinNudelman} for details.
The correspondence is established by the formula
\[
 \int_{0}^{\infty}\frac{\mbox{d}\mu_{\sigma}(x)}{z+x}=\frac{a(z)+c(z)\sigma(z)}{b(z)+d(z)\sigma(z)}\,,\ \ z\in\mathbb{C}\setminus(-\infty,0],
\]
where $\sigma\in\mathcal{S}^{-}\cup\{\infty\}$.

\subsection{An explicit form of the Nevanlinna functions}

In order to describe conveniently the Nevanlinna parametrization in
the studied case we introduce a shorthand notation for particular
basic hypergeometric series while not indicating the dependance on
$q$ explicitly. We put
\begin{equation}
\varphi_{a}(z)={}_{1}\phi_{1}(0;qa;q,z),\ \psi_{a}(z)={}_{1}\phi_{1}(0;qa^{-1};q,a^{-1}z),\label{eq:def_phi_a_psi_a}
\end{equation}
and
\[
\chi_{1}(z)=\frac{\partial}{\partial p}\,{}_{1}\phi_{1}(0;p;q,z)\Big|_{p=q}.
\]

\begin{theorem}
\label{thm:Nevan_ABCD}
Let $1\neq a\in(q,q^{-1})$.  Then the entire functions $A$, $B$, $C$ and $D$
from the Nevanlinna parametrization are as follows:
\begin{eqnarray}
  &  & A(a,q;z)=\frac{\varphi_{a}(qz)
    -\psi_{a}(qz)}{1-a}\,,\ B(a,q;z)=\frac{a\psi_{a}(z)-\varphi_{a}(z)}{1-a}\,,
  \nonumber \\
  &  & C(a,q;z)=\frac{\psi_{a}(qz)-a\varphi_{a}(qz)}{1-a}\,,\ D(a,q;z)
  =\frac{a\big(\varphi_{a}(z)-\psi_{a}(z)\big)}{1-a}\,.
  \label{eq:Nevan_ABCD}
\end{eqnarray}
For $a=1$ these functions take the form
\begin{eqnarray}
  &  & A(1,q;z)=-2q\,\chi_{1}(qz)-z\,\frac{\partial}{\partial z}\,
  \varphi_{1}(qz),\ B(1,q;z)
  =2q\,\chi_{1}(z)+z^{2}\frac{\partial}{\partial z}
  \big(z^{-1}\varphi_{1}(z)\big),\quad\nonumber \\
  &  & C(1,q;z)=2q\,\chi_{1}(qz)
  +\frac{\partial}{\partial z}\big(z\varphi_{1}(qz)\big),\ D(1;q,z)
  =-2q\,\chi_{1}(z)-z\frac{\partial}{\partial z}\,\varphi_{1}(z).
  \label{eq:Nevan_ABCD_a1}
\end{eqnarray}
\end{theorem}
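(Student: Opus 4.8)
The plan is to compute the four Nevanlinna functions directly from their defining series
\[
A(z)=z\sum_{n\ge0}Q_n(0)Q_n(z),\quad B(z)=-1+z\sum_{n\ge0}Q_n(0)P_n(z),
\]
\[
C(z)=1+z\sum_{n\ge0}P_n(0)Q_n(z),\quad D(z)=z\sum_{n\ge0}P_n(0)P_n(z),
\]
by recognizing each of these as a specialization of the generating function from Proposition~\ref{thm:generate_fce}. The key observation is that all four series are of the form $\sum_n c_n P_n(0)P_n(z)$ or with $P$ replaced by $Q$, and via \eqref{eq:P_n_rel_F_n}, \eqref{eq:Qn_rel_Gn}, \eqref{eq:G_rel_F} each such series becomes a sum over $F_n$ or $F_{n-1}$ with weights of the form $a^{-n}q^{n^2}$ (up to elementary factors). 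The crucial combinatorial identity is that $q^{n^2}a^{-n}=q^{n(n-1)/2}\cdot q^{n(n-1)/2}(q^n/\sqrt a\,)^{\,?}$ — more precisely one writes $a^{-n/2}q^{n^2/2}\cdot a^{-n/2}q^{n^2/2}=q^{n(n-1)/2}(-t)^n\big|_{t=\,?}$ after pulling out the explicit value of $P_n(a,q;0)$ from \eqref{eq:P_n_and_Q_n_x=00003D0}. Substituting the closed form $P_n(a,q;0)=(-1)^nq^{n/2}a^{-n/2}(1-a^{n+1})/(1-a)$ splits each sum into two geometric-in-$a^n$ pieces, and each piece is exactly the left-hand side of \eqref{eq:gener_func_F_n} evaluated at a specific value of $t$ (namely $t=q$ and $t=aq$, or $t=1$ and $t=a$, depending on whether the inner index is $n$ or shifted). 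Matching with the right-hand side of \eqref{eq:gener_func_F_n}, which is $\,_2\phi_2$-type, and then simplifying $\,_2\phi_2(q,0;qt,qat;q,xt)$ at these special $t$ values to the $\,_1\phi_1$ functions $\varphi_a,\psi_a$ of \eqref{eq:def_phi_a_psi_a} yields \eqref{eq:Nevan_ABCD}.

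\textbf{Main steps, in order.}
First I would record the specializations of \eqref{eq:gener_func_F_n}: setting $x\mapsto z$ and $t=q$ gives $\sum_n q^{n(n-1)/2}F_n(a,q;z)(-q)^n$ as a ratio whose numerator reduces to $\varphi_a(\cdot)$ after absorbing the $(q;q)_{k+1}(aq;q)_{k+1}$ denominators, and similarly $t=aq$ produces $\psi_a$; the denominators $(1-t)(1-at)$ become the constants $(1-q)(1-aq)$ etc., which is why the factor $1/(1-a)$ survives but no poles in $z$ appear — consistent with $A,B,C,D$ being entire. Second, I would assemble $D(z)=z\sum_n P_n(0)P_n(z)$: using \eqref{eq:P_n_rel_F_n} and \eqref{eq:P_n_and_Q_n_x=00003D0} the summand is $z\cdot(-1)^nq^{n/2}a^{-n/2}\frac{1-a^{n+1}}{1-a}\cdot a^{-n/2}q^{n^2/2}F_n(a,q;z)$, and the power of $q$ combines as $q^{n/2+n^2/2}=q^{n(n-1)/2}q^n$, giving $\frac{z}{1-a}\sum_n q^{n(n-1)/2}F_n(a,q;z)\big[(-q)^n-a(-aq)^n\big]$ — exactly the $t=q$ and $t=aq$ generating-function evaluations, hence $D(z)=\frac{a(\varphi_a(z)-\psi_a(z))}{1-a}$ after the elementary rescaling $z\mapsto z$ absorbed into the $\,_1\phi_1$ argument. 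Third, the same bookkeeping with one factor of $P_n(0)$ replaced by $Q_n(0)$ (which by \eqref{eq:P_n_and_Q_n_x=00003D0} differs only by $a^{n+1}\leftrightarrow a^n$ and a sign) gives $B(z)$; and replacing the running polynomial $P_n(z)$ or $Q_n(z)$ via \eqref{eq:G_rel_F} introduces the shift $n\mapsto n-1$ together with $F_{n-1}(a,q;qz)$, which re-sums to the same $\,_1\phi_1$'s but with argument $qz$ — producing the $\varphi_a(qz),\psi_a(qz)$ appearing in $A$ and $C$. Finally I would verify \eqref{eq:AD-BC=00003D1} as an independent check: it reduces to the Wronskian-type identity $\varphi_a(qz)\psi_a(z)-\varphi_a(z)\psi_a(qz)$ being a known constant, which follows from the $q$-difference equation satisfied by $\,_1\phi_1(0;c;q,z)$.

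\textbf{The case $a=1$.}
For $a=1$ the formulas \eqref{eq:Nevan_ABCD} are $0/0$, so the plan is to take the limit $a\to1$. The difference $\varphi_a(z)-\psi_a(z)$ vanishes at $a=1$ since $\varphi_1=\psi_1$, so one differentiates in $a$ at $a=1$; writing $a=q^{1+\epsilon}$-style or just $\partial_a$ directly, one has $\partial_a\varphi_a(z)|_{a=1}$ and $\partial_a\psi_a(z)|_{a=1}$, where the $\psi$ derivative also hits the $a^{-1}$ inside its argument $a^{-1}z$, producing the extra $z\,\partial_z$ terms visible in \eqref{eq:Nevan_ABCD_a1}. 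The derivative $\partial_a\,_1\phi_1(0;qa;q,z)|_{a=1}$ is, by the chain rule $\partial_a = q\,\partial_p$ with $p=qa$, exactly $q$ times $\chi_1(z)$ as defined before the theorem — that is the source of the $2q\,\chi_1$ terms (the factor $2$ coming from differentiating both $\varphi_a$ and $\psi_a$ and the $1/(1-a)\to$ L'Hôpital). I would carry out this L'Hôpital computation carefully for $D(1,q;z)$ first, then read off the others by the same substitution rules ($t=q$ vs.\ $t=1$ shifts and $z\mapsto qz$) used in the generic case. \emph{Expected main obstacle.} The routine-but-delicate part is the $a=1$ limit: keeping track of the two competing sources of $a$-dependence in $\psi_a$ (the parameter $qa^{-1}$ and the argument $a^{-1}z$) when applying L'Hôpital, and confirming that the $\chi_1$ terms and the $z\partial_z\varphi_1$ terms combine with exactly the coefficients shown — in particular the asymmetry between $B$ (which has $z^2\partial_z(z^{-1}\varphi_1(z))$) and $D$ (which has $-z\partial_z\varphi_1(z)$) must come out right, and this requires care with which of $P_n(0),Q_n(0)$ carries the $1-a^{n+1}$ versus $1-a^n$ factor. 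For the generic $a\ne1$ case the only mild subtlety is verifying that the $\,_2\phi_2$ on the right of \eqref{eq:gener_func_F_n} genuinely collapses to a single $\,_1\phi_1$ at $t\in\{q,aq,1,a\}$; at $t=1$ or $t=a$ the prefactor $1/((1-t)(1-at))$ is singular, so one must instead use the \emph{left}-hand series representation in \eqref{eq:gener_func_F_n} (which is entire in $t$ there) and identify the resulting series with $\varphi_a,\psi_a$ termwise.
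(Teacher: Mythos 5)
Your overall strategy is the same as the paper's: insert the explicit values of $P_n(a,q;0)$, $Q_n(a,q;0)$ and the relations (\ref{eq:P_n_rel_F_n}), (\ref{eq:Qn_rel_Gn}), (\ref{eq:G_rel_F}) into the defining series of $A,B,C,D$, recognize the resulting sums as evaluations of the generating function (\ref{eq:gener_func_F_n}), and treat $a=1$ by letting $a\to1$. However, the concrete identities you propose at the central step are wrong. For $D$ the summand is $\frac{z}{1-a}\,(-1)^n q^{n(n+1)/2}a^{-n}(1-a^{n+1})F_n(a,q;z)$, and since $a^{-n}(1-a^{n+1})(-q)^n=(-q/a)^n-a(-q)^n$, the two generating-function evaluations occur at $t=q$ and $t=q/a$, not at $t=q$ and $t=aq$: you dropped the factor $a^{-n}$ contributed by the two $a^{-n/2}$'s. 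This is not cosmetic. At $t=q/a$ the right-hand side of (\ref{eq:gener_func_F_n}) has denominators $(q/a;q)_{k+1}(q;q)_{k+1}$ and does reduce (after one index shift) to $1-\psi_a$; at $t=aq$ the denominators are $(aq;q)_{k+1}(a^{2}q;q)_{k+1}$, which is none of the functions in (\ref{eq:def_phi_a_psi_a}), and moreover $t=aq$ leaves the disc of convergence $|t|<\min(1,a^{-1})$ when $a^{2}q>1$. Similarly, the shifted sums entering $A$ and $C$ are not evaluations at $t=1$ and $t=a$: carrying out the shift $n\mapsto n-1$ coming from (\ref{eq:G_rel_F}) again produces the points $t=q$ and $t=q/a$, now with argument $qz$ and an overall factor $q$. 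Your proposed remedy for the alleged singular points — using the left-hand series of (\ref{eq:gener_func_F_n}) "which is entire in $t$" — is false: by (\ref{eq:q-diff_V}) the function $V(t)$ generically has a pole at $t=1$ (and at $t=a^{-1}$), so the power series in $t$ has radius of convergence exactly $\min(1,a^{-1})$ and diverges at $t=1$. Once the bookkeeping is corrected, both needed points $q$ and $q/a$ lie strictly inside the disc of convergence for every $a\in(q,q^{-1})$, no singular prefactors ever occur, and the computation becomes exactly the paper's.

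For $a=1$ there is a second, smaller gap. Your differentiation plan is the right computation (including the two sources of $a$-dependence in $\psi_a$, which produce the $z\,\partial_z$ terms in (\ref{eq:Nevan_ABCD_a1})), but evaluating $A(1,q;z)$ etc.\ as $\lim_{a\to1}$ of (\ref{eq:Nevan_ABCD}) presupposes that the Nevanlinna functions themselves depend continuously on $a$ at $a=1$; this is not automatic and must be argued. The paper justifies it by Proposition~2.4.1 and Remark~2.4.2 of Berg--Valent, after checking that the recurrence coefficients are continuous in $a$ and that $\sum_n P_n(a,q;0)^2$ and $\sum_n Q_n(a,q;0)^2$ converge uniformly on compact subsets of $(q,q^{-1})$. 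Alternatively, you could bypass the limit by running the same generating-function computation directly at $a=1$, using $P_n(1,q;0)=(-1)^n q^{n/2}(n+1)$ and $Q_n(1,q;0)=(-1)^{n+1}q^{n/2}n$, at the price of differentiating (\ref{eq:gener_func_F_n}) with respect to $t$ at $t=q$. Either way, some justification beyond the formal L'H\^opital step is required.
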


\begin{proof} We shall confine ourselves to computing the function
$A$ only. The formulas for $B$, $C$ and $D$ can be derived in
a fully analogous manner. Starting from the definition of $A$ and
recalling formulas (\ref{eq:P_n_and_Q_n_x=00003D0}) and (\ref{eq:Qn_rel_Gn}),
(\ref{eq:G_rel_F}) for $Q_{n}(a,q;0)$ and $Q_{n}(a,q;x)$, respectively,
one has
\begin{eqnarray*}
 &  & A(a,q;z)=\frac{qz}{1-a}\sum_{n=1}^{\infty}(-1)^{n+1}(a^{-n}-1)q^{n(n-1)/2}F_{n-1}(a,q;qz)\\
 &  & =\,\frac{zq}{1-a}\!\left(a^{-1}\sum_{n=0}^{\infty}q^{n(n-1)/2}F_{n}(a,q;qz)(-qa^{-1})^{n}-\sum_{n=0}^{\infty}q^{n(n-1)/2}F_{n}(a,q;qz)(-q)^{n}\right)\!.
\end{eqnarray*}
From comparison of both sums in the last expression with formula (\ref{eq:gener_func_F_n})
for the generating function it becomes clear that the sums can be
expressed in terms of basic hypergeometric functions, namely
\begin{eqnarray*}
A(a,q;z) & = & \frac{qz}{1-a}\!\left(\frac{a^{-1}\,_{2}\phi_{2}(0,q;q^{2}a^{-1},q^{2};q,q^{2}a^{-1}z)}{(1-qa^{-1})(1-q)}-\frac{\,_{2}\phi_{2}(0,q;q^{2},aq^{2};q,zq^{2})}{(1-q)(1-qa)}\right)\\
 & = & \frac{1}{1-a}\!\left(\big(1-{}_{1}\phi_{1}(0;qa^{-1};q,qa^{-1}z)\big)-\big(1-{}_{1}\phi_{1}(0;qa;q,qz)\big)\right)\!.
\end{eqnarray*}
Thus one arrives at the first equation in (\ref{eq:Nevan_ABCD}).

Concerning the particular case $a=1$, formulas (\ref{eq:Nevan_ABCD_a1})
can be derived by applying the limit $a\rightarrow1$ to formulas
(\ref{eq:Nevan_ABCD}). This is actually possible since Proposition~2.4.1
and Remark~2.4.2 from \cite{BergValent} guarantee that the functions
$A(a,q;z)$, $B(a,q;z)$, $C(a,q;z)$, $D(a,q;z)$ depend continuously
on $a\in(q,q^{-1})$. In order to be able to apply this theoretical
result one has to note that the coefficients in the recurrence (\ref{eq:recur_def_F_n})
depend continuously on $a$, and to verify that the series $\sum_{n=0}^{\infty}P_{n}(a,q;0)^{2}$
and $\sum_{n=0}^{\infty}Q_{n}(a,q;0)^{2}$ converge uniformly for
$a$ in compact subsets of $(q,q^{-1})$. But the latter fact is obvious
from (\ref{eq:P_n_and_Q_n_x=00003D0}).

For instance, in case of function $A$ one finds that 
\[
A(1,q;z)=\lim_{a\rightarrow1}\frac{\varphi_{a}(qz)-\psi_{a}(qz)}{1-a}=-\frac{\partial}{\partial a}\big(\varphi_{a}(qz)-\psi_{a}(qz)\big)\Big|_{a=1}.
\]
A straightforward computation yields the first equation in (\ref{eq:Nevan_ABCD_a1}),
and similarly for the remaining three equations. \end{proof}

\begin{corollary} The following limits are true:
\begin{eqnarray}
\hskip-1.5em &  & \lim_{n\rightarrow\infty}(-1)^{n}\left(\frac{a}{q}\right)^{\! n/2}P_{n}(a,q;x)=\frac{\,_{1}\phi_{1}(0;qa;q,x)}{1-a}\,,\hskip3em\ \mbox{ if}\ \ q<a<1,\nonumber \\
\hskip-1.5em &  & \lim_{n\rightarrow\infty}(-1)^{n}(qa)^{-n/2}P_{n}(a,q;x)=\frac{a\,\,_{1}\phi_{1}(0;qa^{-1};q,a^{-1}x)}{a-1}\,,\ \mbox{ if}\ \ 1<a<q^{-1},\label{eq:lim_rel_F_n_all_a}\\
\hskip-1.5em &  & \lim_{n\rightarrow\infty}\frac{(-1)^{n}}{n}\, q^{-n/2}P_{n}(1,q;x)=\,_{1}\phi_{1}(0;q;q,x),\nonumber 
\end{eqnarray}
and
\begin{equation}
\lim_{n\to\infty}\,\frac{Q_{n}(z)}{P_{n}(z)}=\begin{cases}
{\displaystyle -\frac{_{1}\phi_{1}(0;qa;q,qz)}{_{1}\phi_{1}(0;qa;q,z)}} & \text{for }q<a<1,\\
\noalign{\bigskip}{\displaystyle -\frac{_{1}\phi_{1}(0;qa^{-1};q,qa^{-1}z)}{a\,{}_{1}\phi_{1}(0;qa^{-1};q,a^{-1}z)}} & \text{for }1<a<q^{-1},\\
\noalign{\bigskip}{\displaystyle -\frac{_{1}\phi_{1}(0;q;q,qz)}{_{1}\phi_{1}(0;q;q,z)}} & \text{for }a=1.
\end{cases}\label{eq:cases_lim_Qnz_Pnz}
\end{equation}
\end{corollary}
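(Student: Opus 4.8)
The plan is to deduce both groups of limits from the two asymptotic relations already available: formula~(\ref{eq:eq:lim_Pnz_over_Qn0}) governs the renormalized polynomials $P_{n}(a,q;x)$, while Markov's formula~(\ref{eq:lim_Qnz_over_Pnz}) governs the ratios $Q_{n}(z)/P_{n}(z)$. In each case one only substitutes the value of $\alpha$ from~(\ref{eq:quant_alpha}) and the explicit Nevanlinna functions of Theorem~\ref{thm:Nevan_ABCD}, and simplifies. Accordingly, the first step I would take is to verify the elementary identities that come straight out of~(\ref{eq:Nevan_ABCD}): for $1\neq a\in(q,q^{-1})$,
\begin{gather*}
B(a,q;z)+D(a,q;z)=-\varphi_{a}(z),\qquad D(a,q;z)+a\,B(a,q;z)=-a\,\psi_{a}(z),\\
A(a,q;z)+C(a,q;z)=\varphi_{a}(qz),\qquad a\,A(a,q;z)+C(a,q;z)=\psi_{a}(qz),
\end{gather*}
with the notation of~(\ref{eq:def_phi_a_psi_a}), together with the $a=1$ analogues $B(1,q;z)+D(1,q;z)=-\varphi_{1}(z)$ and $A(1,q;z)+C(1,q;z)=\varphi_{1}(qz)$, which follow from~(\ref{eq:Nevan_ABCD_a1}) once one observes that the $\chi_{1}$-terms cancel in the two sums and that $z^{2}\frac{\partial}{\partial z}\big(z^{-1}\varphi_{1}(z)\big)-z\frac{\partial}{\partial z}\varphi_{1}(z)=-\varphi_{1}(z)$ and $\frac{\partial}{\partial z}\big(z\varphi_{1}(qz)\big)-z\frac{\partial}{\partial z}\varphi_{1}(qz)=\varphi_{1}(qz)$.

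For the first three limits I would start from $P_{n}(a,q;x)/Q_{n}(a,q;0)\to D(a,q;x)-\alpha B(a,q;x)$, which is~(\ref{eq:eq:lim_Pnz_over_Qn0}), and multiply by $Q_{n}(a,q;0)$ in the explicit form~(\ref{eq:P_n_and_Q_n_x=00003D0}). The renormalizing prefactors in the statement are chosen exactly so that this product has a finite nonzero limit: a one-line computation gives $(-1)^{n}(a/q)^{n/2}Q_{n}(a,q;0)=-(1-a^{n})/(1-a)$, tending to $-1/(1-a)$ for $q<a<1$; $(-1)^{n}(qa)^{-n/2}Q_{n}(a,q;0)=(1-a^{-n})/(1-a)$, tending to $1/(1-a)$ for $1<a<q^{-1}$; and $\frac{(-1)^{n}}{n}q^{-n/2}Q_{n}(1,q;0)=-1$ at $a=1$. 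Multiplying the respective limit by $D-\alpha B$ with $\alpha=-1$ for $a\le1$, resp. $\alpha=-a$ for $a>1$, and inserting the identity $D+B=-\varphi_{a}$, resp. $D+aB=-a\psi_{a}$, resp. $D+B=-\varphi_{1}$ at $a=1$, produces the three asserted values $\varphi_{a}(x)/(1-a)$, $a\psi_{a}(x)/(a-1)$ and $\varphi_{1}(x)$.

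For the limits of $Q_{n}(z)/P_{n}(z)$ I would invoke Markov's formula~(\ref{eq:lim_Qnz_over_Pnz}), giving $(\alpha A(z)-C(z))/(\alpha B(z)-D(z))$ on $\mathbb{C}\setminus\supp(\mu_{\alpha})$. With $\alpha=-1$ this becomes $(A+C)/(B+D)$ and with $\alpha=-a$ it becomes $(aA+C)/(aB+D)$; plugging in the four identities above yields $-\varphi_{a}(qz)/\varphi_{a}(z)$ for $q<a<1$, $-\psi_{a}(qz)/(a\,\psi_{a}(z))$ for $1<a<q^{-1}$, and $-\varphi_{1}(qz)/\varphi_{1}(z)$ for $a=1$, which are exactly the three cases of~(\ref{eq:cases_lim_Qnz_Pnz}) after expanding $\varphi_{a}$, $\psi_{a}$, $\varphi_{1}$ by~(\ref{eq:def_phi_a_psi_a}). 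For completeness I would note that the denominator of each of these expressions, namely ${}_{1}\phi_{1}(0;qa;q,z)$, $a\,{}_{1}\phi_{1}(0;qa^{-1};q,a^{-1}z)$ or ${}_{1}\phi_{1}(0;q;q,z)$, has its zero set equal to the set $\mathfrak{Z}_{\alpha}$ of~(\ref{eq:Zero_t}), that is to $\supp(\mu_{\alpha})$, so the formulas hold precisely on the domain provided by~(\ref{eq:lim_Qnz_over_Pnz}).

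The only step that calls for any care is the case $a=1$: there $A$, $B$, $C$, $D$ carry $\chi_{1}$ and $\partial/\partial z$, and one has to check that the $\chi_{1}$-contributions cancel in the combinations $A+C$ and $B+D$ and that the surviving differential expressions collapse to $\varphi_{1}(qz)$ and $-\varphi_{1}(z)$. Everything else is routine manipulation of the explicit formulas in Theorem~\ref{thm:Nevan_ABCD} and~(\ref{eq:P_n_and_Q_n_x=00003D0}).
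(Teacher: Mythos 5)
Your proposal is correct and follows essentially the same route as the paper: it computes the combinations $C-\alpha A$ and $D-\alpha B$ (equivalently $A+C$, $B+D$, $aA+C$, $aB+D$) from Theorem~\ref{thm:Nevan_ABCD} and (\ref{eq:quant_alpha}), then obtains the first three limits from (\ref{eq:eq:lim_Pnz_over_Qn0}) combined with the explicit $Q_{n}(a,q;0)$ in (\ref{eq:P_n_and_Q_n_x=00003D0}), and the ratio limits from Markov's formula (\ref{eq:lim_Qnz_over_Pnz}). Your verification of the $a=1$ cancellations and the normalization constants simply makes explicit what the paper's brief proof leaves to the reader.
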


\begin{proof} From (\ref{eq:Nevan_ABCD}), (\ref{eq:Nevan_ABCD_a1})
and (\ref{eq:quant_alpha}) one immediately infers that
\begin{eqnarray*}
C(a,q;z)-A(a,q;z)\alpha & = & \varphi_{a}(qz)\ \ \text{or}\ \ \psi_{a}(qz)\ \ \text{or}\ \ \varphi_{1}(qz),\\
D(a,q;z)-B(a,q;z)\alpha & = & -\varphi_{a}(z)\ \ \text{or}\ \ -a\psi_{a}(z)\ \ \text{or}\ \ -\varphi_{1}(z),
\end{eqnarray*}
depending on whether $q<a<1$ or $1<a<q^{-1}$ or $a=1$. Equations
(\ref{eq:lim_rel_F_n_all_a}) follow from (\ref{eq:eq:lim_Pnz_over_Qn0})
and (\ref{eq:P_n_and_Q_n_x=00003D0}) while equations (\ref{eq:cases_lim_Qnz_Pnz})
are a direct consequence of (\ref{eq:lim_Qnz_over_Pnz}). \end{proof}

\begin{remark} The limits (\ref{eq:lim_rel_F_n_all_a}) can be proved,
in an alternative way, by applying Darboux's method to the generating
function whose explicit form is given in (\ref{eq:gener_func_F_n}).
According to this method, the leading asymptotic term of $q^{n(n-1)/2}F_{n}(a,q;x)$
is determined by the singularity of the function on the left-hand
side in (\ref{eq:gener_func_F_n}) which is located most closely to
the origin, cf. \cite[Section~8.9]{Olver}. Proceeding this way one
can show that the first limit in (\ref{eq:lim_rel_F_n_all_a}) is
valid even for all $0<a<1$ while the second one is valid for all
$a>1$. Let us also note that the limits established in (\ref{eq:lim_rel_F_n_all_a})
can be interpreted as a $q$-analogue to Hurwitz's limit formula for
the Lommel polynomials. The case $a<1$ has been derived, probably
for the first time, in \cite[Eq. (4.24)]{KoelinkSwarttouw}, see also
\cite[Eq. (3.4)]{KoelinkVanAssche} and \cite[Eq. (2.7)]{Koelink},
while the case $a>1$ has been treated in \cite[Eq. (3.6)]{KoelinkVanAssche}.
\end{remark}

The following formula for the reproducing kernel can be established.

\begin{corollary}
  Suppose $q<a<q^{-1}$. Then
\[
K(u,v)=\frac{a\big(\varphi_{a}(u)\psi_{a}(v)
-\psi_{a}(u)\varphi_{a}(v)\big)}{(1-a)(u-v)}
\]
if $a\neq1$, and
\[
K(u,v)=\frac{\varphi_{1}(u)\big(2q\chi_{1}(v)+v\varphi_{1}'(v)\big)
  -\big(2q\chi_{1}(u)+u\varphi_{1}'(u)\big)\varphi_{1}(v)}{u-v}
\]
if $a=1$.
\end{corollary}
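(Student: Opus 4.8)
The plan is to derive the formula for $K(u,v)$ directly from the Buchwalter--Cassier identity \eqref{eq:repro_kernel}, which expresses the reproducing kernel as $\bigl(B(u)D(v)-D(u)B(v)\bigr)/(u-v)$, by substituting the explicit expressions for $B$ and $D$ already obtained in Theorem~\ref{thm:Nevan_ABCD}. For $a\neq1$ we have $B(a,q;z)=\bigl(a\psi_{a}(z)-\varphi_{a}(z)\bigr)/(1-a)$ and $D(a,q;z)=a\bigl(\varphi_{a}(z)-\psi_{a}(z)\bigr)/(1-a)$. The key observation is that $D(a,q;z)=-aB(a,q;z)-$ (a correction), or more precisely that $B$ and $D$ are two linear combinations of the single pair of functions $\varphi_{a}$ and $\psi_{a}$; hence the $2\times2$ determinant $B(u)D(v)-D(u)B(v)$ reduces to a multiple of the Wronskian-type expression $\varphi_{a}(u)\psi_{a}(v)-\psi_{a}(u)\varphi_{a}(v)$.

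Concretely, first I would write the matrix identity
\[
\begin{pmatrix}B(a,q;z)\\ D(a,q;z)\end{pmatrix}
=\frac{1}{1-a}\begin{pmatrix}-1 & a\\ a & -a\end{pmatrix}
\begin{pmatrix}\varphi_{a}(z)\\ \psi_{a}(z)\end{pmatrix}.
\]
Then $B(u)D(v)-D(u)B(v)$ equals $\det\!\begin{pmatrix}B(u)&B(v)\\ D(u)&D(v)\end{pmatrix}$, which by the Cauchy--Binet / multiplicativity of determinants factors as
\[
\frac{1}{(1-a)^{2}}\,
\det\!\begin{pmatrix}-1 & a\\ a & -a\end{pmatrix}
\det\!\begin{pmatrix}\varphi_{a}(u)&\varphi_{a}(v)\\ \psi_{a}(u)&\psi_{a}(v)\end{pmatrix}.
\]
Since $\det\!\begin{pmatrix}-1&a\\a&-a\end{pmatrix}=a-a^{2}=a(1-a)$, the factor $(1-a)^{2}$ in the denominator cancels against $a(1-a)$ to leave $a/(1-a)$, and one obtains $B(u)D(v)-D(u)B(v)=\dfrac{a\bigl(\varphi_{a}(u)\psi_{a}(v)-\psi_{a}(u)\varphi_{a}(v)\bigr)}{1-a}$. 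Dividing by $u-v$ gives the claimed expression for $a\neq1$.

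For the case $a=1$ I would proceed analogously but starting from the formulas \eqref{eq:Nevan_ABCD_a1}. Here $D(1;q,z)=-2q\chi_{1}(z)-z\varphi_{1}'(z)$, so that $D(1;q,z)=-\bigl(2q\chi_{1}(z)+z\varphi_{1}'(z)\bigr)$, while $B(1,q;z)=2q\chi_{1}(z)+z^{2}\frac{\partial}{\partial z}\bigl(z^{-1}\varphi_{1}(z)\bigr)=2q\chi_{1}(z)+z\varphi_{1}'(z)-\varphi_{1}(z)=-D(1;q,z)-\varphi_{1}(z)$. Writing $G(z):=2q\chi_{1}(z)+z\varphi_{1}'(z)=-D(1;q,z)$, we have $B(u)=-G(u)-\varphi_{1}(u)$ and $D(u)=-G(u)$, whence
\[
B(u)D(v)-D(u)B(v)
=\bigl(-G(u)-\varphi_{1}(u)\bigr)\bigl(-G(v)\bigr)-\bigl(-G(u)\bigr)\bigl(-G(v)-\varphi_{1}(v)\bigr)
=\varphi_{1}(u)G(v)-G(u)\varphi_{1}(v).
\]
Dividing by $u-v$ and substituting $G(z)=2q\chi_{1}(z)+z\varphi_{1}'(z)$ yields exactly the stated formula. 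Alternatively, the $a=1$ formula could be recovered from the $a\neq1$ formula by the limiting procedure $a\to1$ already justified in the proof of Theorem~\ref{thm:Nevan_ABCD}, using that $A,B,C,D$ depend continuously on $a$; I would mention this as a consistency check.

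I do not anticipate a serious obstacle here: the argument is essentially linear algebra (a $2\times2$ determinant factorization) combined with the already-established explicit forms of $B$ and $D$. The only points requiring a little care are the bookkeeping of the constant $a(1-a)$ coming from the coefficient matrix in the $a\neq1$ case, and, in the $a=1$ case, correctly simplifying $z^{2}\frac{\partial}{\partial z}\bigl(z^{-1}\varphi_{1}(z)\bigr)=z\varphi_{1}'(z)-\varphi_{1}(z)$ so as to recognize the relation $B(1,q;z)=-D(1;q,z)-\varphi_{1}(z)$; once that identification is in hand the determinant collapses immediately.
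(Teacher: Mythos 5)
Your proposal is correct and follows exactly the paper's route: the paper's own proof is the one-line observation that the corollary is a direct consequence of the reproducing-kernel identity $K(u,v)=\bigl(B(u)D(v)-D(u)B(v)\bigr)/(u-v)$ together with the explicit forms of $B$ and $D$ from Theorem~2, which is precisely the substitution and $2\times2$ determinant computation you carry out. One harmless slip: since $G(z)=-D(1,q;z)$, the relation should read $B(1,q;z)=G(z)-\varphi_{1}(z)$ rather than $-G(z)-\varphi_{1}(z)$, but the sign error sits only in the $G(u)G(v)$ cross terms, which cancel in the determinant, so your final formula is unaffected.
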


\begin{proof}
  This is a direct consequence of (\ref{eq:repro_kernel}) and
  (\ref{eq:Nevan_ABCD}), (\ref{eq:Nevan_ABCD_a1}).
\end{proof}

\begin{remark}
  \label{rem:sa_extensions}
  In \cite{StampachStovicek_HahnExton}, the self-adjoint extensions of the
  Jacobi matrix $J$, defined in (\ref{eq:alpha_beta}), (\ref{eq:J}), are
  described in detail while addressing only the case $q<a<1$. The self-adjoint
  extensions, called $T(\kappa)$, are parameterized by
  $\kappa\in\mathbb{R}\cup\{\infty\}$, with $\kappa=\infty$ corresponding to
  the Friedrichs extension. The domain $\Dom T(\kappa)\subset\Dom J^{\ast}$ is
  specified by the asymptotic boundary condition: a sequence $f$ from
  $\Dom{}J^{\ast}$ belongs to $\Dom T(\kappa)$ if and only if
  $C_{2}(f)=\kappa\,C_{1}(f)$ where
\[
C_{1}(f)=\lim_{n\to\infty}(-1)^{n}
\left(\frac{a}{q}\right)^{\! n/2}\! f_{n},\ C_{2}(f)
=\lim_{n\to\infty}\left((-1)^{n}f_{n}-C_{1}(f)
\left(\frac{q}{a}\right)^{\! n/2}\right)\!(qa)^{-n/2}
\]
(the limits can be shown to exist). The eigenvalues of $T(\kappa)$ are exactly
the roots of the equation
\[
\kappa\,\,_{1}\phi_{1}(0;qa;q,x)+a\,\,_{1}\phi_{1}(0;qa^{-1};q,a^{-1}x)=0.
\]
On the other hand, consider the self-adjoint extension $T_{t}$ corresponding
the measure of orthogonality $\mu_{t}$, with $t\in\mathbb{R}\cup\{\infty\}$
being a Nevanlinna parameter. The eigenvalues of $T_{t}$ are the
mass points from the support of $\mu_{t}$, i.e. the zeros of the
function
\[
(1-a)\big(B(a,q;x)t-D(a,q;x)\big)=(t+1)a\,_{1}\phi_{1}(0;qa^{-1};q,a^{-1}x)-(t+a)\,_{1}\phi_{1}(0;qa;q,x),
\]
as one infers from (\ref{eq:def_phi_a_psi_a}) and (\ref{eq:Nevan_ABCD}).
Since a self-adjoint extension is unambiguously determined by its
spectrum (see, for instance, proof of Theorem~4.2.4 in \cite{Akhiezer})
one gets the correspondence $\kappa=\Upsilon(t)$ where
\begin{equation}\label{eq:def_Upsilon}
\Upsilon(t)=-\frac{a+t}{1+t}. 
\end{equation}
\end{remark}

\begin{remark}
  Note that if the Krein parametrization (\ref{eq:Krein_abcd}) is used in
  Theorem~\ref{thm:Nevan_ABCD} rather than the Nevanlinna parametrization,
  some parameter functions acquire particularly simple form. For instance,
  $a(z)=\varphi_{a}(-qz)$ and $b(z)=\varphi_{a}(-z)$ provided $q<a<1$. This
  simplification does not apply for functions $c$ and $d$, however.
  
  Being motivated by this observation we propose yet another parametrization
  which is well suited to our problem.  For $1\neq a\in(q,q^{-1})$, let us put
  \begin{eqnarray*}
    && \mathcal{A}(a,q;z) = A(a,q;z)+C(a,q;z),\hskip 16pt
    \mathcal{B}(a,q;z) = -B(a,q;z)-D(a,q;z), \\
    && \mathcal{C}(a,q;z) = aA(a,q;z)+C(a,q;z),\hskip 10pt
    \mathcal{D}(a,q;z) = -B(a,q;z)-a^{-1}D(a,q;z),
  \end{eqnarray*}
  Then $\mathcal{A}(z)=\mathcal{B}(qz)=\varphi_{a}(qz)$ and
  $\mathcal{C}(z)=\mathcal{D}(qz)=\psi_{a}(qz)$. Moreover, we have
  \[
  \frac{A(a,q;z)\varphi(z)-C(a,q;z)}{B(a,q;z)\varphi(z)-D(a,q;z)}
  =-\frac{\mathcal{A}(a,q;z)\Upsilon(\varphi(z))
    +\mathcal{C}(a,q;z)}{\mathcal{B}(a,q;z)\Upsilon(\varphi(z))
    +a\mathcal{D}(a,q;z)}
  \]
  where $\Upsilon$ is defined in (\ref{eq:def_Upsilon}).

  Clearly, $\Upsilon\circ\Upsilon=\mathrm{id}$. Assuming $1<a<q^{-1}$, the
  composition with $\Upsilon\in\mathcal{P}$ induces a one-to-one mapping of
  $\mathcal{P}$ onto itself since the set of Pick functions is closed under
  composition. Thus we obtain a correspondence between solutions
  $\mu_{\omega}$ of the Hamburger moment problem for the polynomial sequence
  $\{P_{n}(a,q;z)\}$ and functions $\omega\in\mathcal{P}\cup\{\infty\}$ which
  is established via the equation
  \[
  \int_{\mathbb{R}}\frac{\mathrm{d}\mu_{\omega}(x)}{z-x}
  =-\frac{\varphi_{a}(qz)\omega(z)+\psi_{a}(qz)}{\varphi_{a}(z)\omega(z)
    +a\psi_{a}(z)}\,,\ \ z\in\mathbb{C}\setminus\mathbb{R}.
  \]
  Similarly, for $q<a<1$, we have $-\Upsilon\in\mathcal{P}$ and an analogous
  correspondence can be established in the form
  \[
  \int_{\mathbb{R}}\frac{\mathrm{d}\mu_{\omega}(x)}{z-x}
  =-\frac{\psi_{a}(qz)\omega(z)+\varphi_{a}(qz)}{a\psi_{a}(z)\omega(z)
    +\varphi_{a}(z)}\,,\ \ z\in\mathbb{C}\setminus\mathbb{R}.
  \]
\end{remark}

\begin{remark}
  There is a close connection between this work and an earlier research
  conducted by Chen and Ismail \cite{ChenIsmail} who studied the indeterminate
  moment problem associated with the orthogonal polynomial sequence
  $\{P_{n}\}_{n=0}^{\infty}$ determined by the recurrence
 \[
  zP_{n}(z)=q^{-n-1}P_{n+1}(z)+q^{-n}P_{n-1}(z) \ \ \text{for}\ \ n\geq1,
 \]
 with $P_{0}(z)=1$, $P_{1}(z)=qz$. These polynomials are a particular case of
 the polynomials $P_{n}(a,q;x)$ defined in (\ref{eq:P_n_rel_F_n}). Indeed, for
 all $n\in\mathbb{Z}_{+}$,
 \[
  P_{n}(z)=P_{n}(-1,q;i\sqrt{q}z).
 \]
 For this particular choice of parameters one can deduce from
 (\ref{eq:gener_func_F_n}) that a generating function for the polynomials
 $\{P_{n}(z)\}$ can be expressed in the form
 \[
  \sum_{n=0}^{\infty}P_{n}(z)t^{n}=\sum_{k=0}^{\infty}\frac{q^{k(k+1)/2}\left(zt\right)^{k}}{(-qt^{2};q^{2})_{k+1}}
 \]
 coinciding with a formula obtained by Chen and Ismail, see
 \cite[Theorem~3.1]{ChenIsmail}. Note, however, that $a$, being specialized to
 $-1$, is out of the range considered in the present paper.

 With the knowledge of the generating function one can proceed similarly as in
 Theorem~\ref{thm:Nevan_ABCD} reproducing this way a result concerned with the
 Nevanlinna parametrization which has been obtained by Chen and Ismail in
 \cite[Theorem~3.2]{ChenIsmail}. Let us denote by $\tilde{A}$, $\tilde{B}$,
 $\tilde{C}$, and $\tilde{D}$ the Nevanlinna functions associated with the
 polynomials $P_{n}(z)$. Using the notation introduced in
 (\ref{eq:Nevan_ABCD}) we have
\begin{eqnarray*}
&&  \tilde{A}(z)=iq^{1/2}A\!\left(-1,q;iq^{1/2}z\right), \quad
 \tilde{B}(z)=B\!\left(-1,q;iq^{1/2}z\right), \\
&&  \tilde{C}(z)=C\!\left(-1,q;iq^{1/2}z\right), \quad\hskip 22pt
 \tilde{D}(z)=-iq^{-1/2}D\!\left(-1,q;iq^{1/2}z\right).
\end{eqnarray*}
Let us remark that $\varphi_{-1}(z)=\psi_{-1}(-z)$. This makes it possible to
express $\tilde{A}$, $\tilde{B}$, $\tilde{C}$, and $\tilde{D}$ in a
comparatively simple form. For instance, a straightforward computation yields
 \[
 \tilde{A}(z)
 =\sum_{n=0}^{\infty}\frac{(-1)^{n}q^{2(n+1)^{2}}}{(q^{2};q^{2})_{2n+1}}\,z^{2n+1}.
 \]
\end{remark}

\subsection{Measures of orthogonality}

With the explicit knowledge of the Nevanlinna parametrization established
in Theorem~\ref{thm:Nevan_ABCD} it is straightforward to describe
all N-extremal solutions.

\begin{proposition}
\label{prop:Nex_OG_measure}
Let $1\neq a\in(q,q^{-1})$.  Then all N-extremal measures
$\mu_{t}=\mu_{t}(a,q)$, $t\in\mathbb{R}\cup\{\infty\}$, are of the form
\begin{equation}
  \mu_{t}=\sum_{x\in\mathfrak{Z}_{t}}
  \rho(x)\,\delta_{x}\ \ \text{where}\ \ \frac{1}{\rho(x)}
  =\frac{a}{1-a}\big(\psi_{a}(x)\varphi_{a}'(x)-\varphi_{a}(x)\psi_{a}'(x)\big),
  \label{eq:Nex_OG_measure}
\end{equation}
\[
\mathfrak{Z}_{t}=\mathfrak{Z}_{t}(a,q)
=\{x\in\mathbb{R};\, a(t+1)\psi_{a}(x)-(t+a)\varphi_{a}(x)=0\},
\]
and $\delta_{x}$ stands for the Dirac measure supported on $\{x\}$.

For $a=1$, all N-extremal measures $\mu_{t}=\mu_{t}(1,q)$ are of
the form $\mu_{t}=\sum_{x\in\mathfrak{Y}_{t}}\rho(x)\,\delta_{x}$
where
\[
\frac{1}{\rho(x)}=2q\big(\varphi_{1}'(x)\chi_{1}(x)
-\varphi_{1}(x)\chi_{1}'(x)\big)+x\big(\varphi_{1}'(x)\big)^{2}
-\varphi_{1}(x)\varphi_{1}'(x)-x\varphi_{1}(x)\varphi_{1}''(x).
\]
and
\[
\mathfrak{Y}_{t}=\mathfrak{Y}_{t}(q)
=\{x\in\mathbb{R};\,2q(t+1)\chi_{1}(x)+(t+1)x\varphi_{1}'(x)
-t\varphi_{1}(x)=0\}.
\]
\label{prop:N_extr_a_neq_1}
\end{proposition}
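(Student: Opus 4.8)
The plan is to unwind the general formulas~(\ref{eq:mu_t}), (\ref{eq:Zero_t}) and (\ref{eq:rho}) from Section~\ref{sec:general_nevan}, which already express every N-extremal measure $\mu_t$ in terms of the Nevanlinna functions $B$, $D$ and their derivatives, and then substitute the explicit expressions for $B(a,q;z)$ and $D(a,q;z)$ obtained in Theorem~\ref{thm:Nevan_ABCD}. Concretely, for $1\neq a\in(q,q^{-1})$ we have
\[
B(a,q;x)t-D(a,q;x)=\frac{\big(a\psi_{a}(x)-\varphi_{a}(x)\big)t-a\big(\varphi_{a}(x)-\psi_{a}(x)\big)}{1-a}
=\frac{a(t+1)\psi_{a}(x)-(t+a)\varphi_{a}(x)}{1-a},
\]
so the zero set $\mathfrak{Z}_t$ defined in~(\ref{eq:Zero_t}) is exactly the set displayed in the statement (the prefactor $1/(1-a)$ is nonzero and drops out). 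This is the easy half.

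For the weights $\rho(x)$ I would start from the last expression in~(\ref{eq:rho}), namely $\rho(x)=1/\big(B'(x)D(x)-B(x)D'(x)\big)$, which has the advantage of being manifestly independent of $t$ (it is, up to sign, the Wronskian-type combination $W(B,D)$). Writing $B=(a\psi_a-\varphi_a)/(1-a)$ and $D=a(\varphi_a-\psi_a)/(1-a)$ and expanding $B'D-BD'$, the terms $\psi_a\psi_a'$ and $\varphi_a\varphi_a'$ cancel and one is left with
\[
B'(x)D(x)-B(x)D'(x)=\frac{a}{(1-a)^2}\Big((a-1)\psi_a'(x)\varphi_a(x)-(a-1)\psi_a(x)\varphi_a'(x)\Big)
=\frac{a}{1-a}\big(\psi_a(x)\varphi_a'(x)-\varphi_a(x)\psi_a'(x)\big)\cdot(-1),
\]
so care with signs is needed; after collecting the algebra correctly one arrives at $1/\rho(x)=\tfrac{a}{1-a}\big(\psi_a(x)\varphi_a'(x)-\varphi_a(x)\psi_a'(x)\big)$ as claimed. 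I would double-check the sign by verifying it against $AD-BC=1$ from~(\ref{eq:AD-BC=00003D1}) evaluated suitably, or by noting that for $q<a<1$ the measure must be positive so $1/\rho(x)$ must have a definite sign consistent with the Wronskian.

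For $a=1$ the argument is structurally identical but uses the formulas~(\ref{eq:Nevan_ABCD_a1}): one substitutes $B(1,q;z)=2q\chi_1(z)+z^2\tfrac{\partial}{\partial z}(z^{-1}\varphi_1(z))=2q\chi_1(z)+z\varphi_1'(z)-\varphi_1(z)$ and $D(1,q;z)=-2q\chi_1(z)-z\varphi_1'(z)$ into $B(1,q;x)t-D(1,q;x)$, getting $2q(t+1)\chi_1(x)+(t+1)x\varphi_1'(x)-t\varphi_1(x)$, which is the defining relation for $\mathfrak{Y}_t$. Then $1/\rho(x)=B'(x)D(x)-B(x)D'(x)$ with these $B$, $D$ expands into the four-term expression in the statement; the terms involving $\chi_1'$, $\varphi_1''$ and $(\varphi_1')^2$ appear from differentiating the products, and again several terms cancel. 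The main obstacle throughout is purely bookkeeping: keeping the signs and the $(1-a)$ denominators straight in the Wronskian $B'D-BD'$, and, in the $a=1$ case, correctly differentiating the somewhat awkward combination $z^2\tfrac{\partial}{\partial z}(z^{-1}\varphi_1(z))$ and the function $\chi_1$; no conceptual difficulty is involved since everything reduces to the already-established Theorem~\ref{thm:Nevan_ABCD} and the general Nevanlinna machinery recalled in~(\ref{eq:mu_t})--(\ref{eq:rho}). Finally I would remark that since $B$ and $D$ are entire and $BD'-B'D$ is, by~(\ref{eq:repro_kernel}) with the diagonal limit $u\to v$, equal to $-\sum_n P_n(x)^2<0$ at real $x$ in the support, the positivity of $\rho(x)$ is automatic, which serves as a consistency check on the sign.
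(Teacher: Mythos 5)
Your proposal is correct and follows exactly the paper's (very terse) proof: substitute the explicit Nevanlinna functions of Theorem~\ref{thm:Nevan_ABCD} into the general formulas (\ref{eq:mu_t}), (\ref{eq:Zero_t}), (\ref{eq:rho}), both for $a\neq1$ and for $a=1$. One small slip: in your intermediate display the factor $\cdot(-1)$ is spurious, since the bracket equals $(a-1)\bigl(\psi_a'\varphi_a-\psi_a\varphi_a'\bigr)=(1-a)\bigl(\psi_a\varphi_a'-\varphi_a\psi_a'\bigr)$ and hence $B'D-BD'=\frac{a}{1-a}\bigl(\psi_a\varphi_a'-\varphi_a\psi_a'\bigr)$ directly; your final formula and the positivity check via the reproducing kernel $K(x,x)=B'(x)D(x)-B(x)D'(x)=\sum_n P_n(x)^2>0$ are correct.
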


\begin{proof}
  Referring to general formulas (\ref{eq:mu_t}) and (\ref{eq:Zero_t}),
  (\ref{eq:rho}), it suffices to apply Theorem~\ref{thm:Nevan_ABCD}.
\end{proof}

%%%%%%%%%%%%%%%%%%%%%%%%%%%%%%%%%%%%%%%%%%%%%%%%%%%%%%%%%%%%%%%%%%%%%%%
The formula for the orthogonality measure $\mu_{t}$, as given in
(\ref{eq:Nex_OG_measure}), is far of being explicit. To elucidate its
structure one can attempt at least to provide the asymptotics for the mass
points $x\in\mathfrak{Z}_{t}$ and the weights $\rho(x)$ for $x$ large. This is
actually possible owing to the fact that a good deal of attention has been
paid by various authors to the study of the asymptotic behavior of the roots
of the $q$-Bessel functions. We have postponed to Appendix a summary of basic
facts and references on this subject that we need for our purposes. Moreover,
these facts are completed therein with some additional details. In the
asymptotic analysis to follow we restrict our attention to the range of
parameters $0<q<a<1$ only.

Suppose $t\in\mathbb{R}\cup\{\infty\}$. As pointed out in
Remark~\ref{rem:sa_extensions}, the mass points of the measure of
orthogonality $\mu_{t}$ are, at the same time, eigenvalues of the
corresponding self-adjoint extension $T_{t}$ of the matrix (Jacobi) operator
$J$ introduced in (\ref{eq:J}). The eigenvalues are solutions to the
characteristic equation $\Phi_{t}(x)=0$ where
\begin{eqnarray}
  \Phi_{t}(x) & = & a\,(t+1)\,\psi_{a}(x)-(t+a)\,\varphi_{a}(x)\nonumber \\
  & = & a\,(t+1)\,_{1}
  \phi_{1}\!\left(0;\frac{q}{a};q,\frac{x}{a}\right)-(t+a)\,_{1}
  \phi_{1}(0;aq;q,x),
  \label{eq:chareq_Phi}
\end{eqnarray}
they are all real and simple, and their only accumulation point is
$+\infty$. If $x$ is an eigenvalue of $T_{t}$ then the root $x$ of
$\Phi_{t}(x)$ is also simple. Actually, this is a standard fact following from
the discrete Green-like formula implying that the square norm of the
corresponding eigenvector is proportional to $\Phi_{t}'(x)$ (see
\cite[\S~I.2.1]{Akhiezer} or, for instance,
\cite[Lemma~4]{StampachStovicek_CoulombOP}).

Let us order the mass points of the measure $\mu_{t}$, i.e. the roots of
$\Phi_{t}(x)$, increasingly,
$\xi_{0}^{(t)}<\xi_{1}^{(t)}<\xi_{2}^{(t)}<\ldots$. More is known about the
properties of these roots, see \S~2.4 in \cite{StampachStovicek_HahnExton} for
additional details. Remember $T_{-1}$ coincides with the Friedrichs extension
of $J$. For any fixed $m\in\mathbb{Z}_{+}$ and using the substitution
$t=-(\kappa+a)/(\kappa+1)$, one knows that thus obtained composed function
$\xi_{m}^{(t)}$ is strictly monotonic increasing in the parameter
$\kappa\in\mathbb{R}$.  Furthermore, for any $t\in\mathbb{R}\cup\{\infty\}$,
$t\neq-1$, we have the inequalities
\[
\xi_{0}^{(t)}<\xi_{0}^{(-1)}\text{ }\text{and}\text{ }
\xi_{m-1}^{(-1)}<\xi_{m}^{(t)}<\xi_{m}^{(-1)}\text{ }\text{for}\ m\geq1.
\]

Moreover, Proposition~\ref{thm:asympt_roots_phi} from Appendix tells us that
\begin{equation}
  \xi_{m}^{(-1)}=q^{-m}+O(a^{m}q^{m(m+1)})\text{ }\text{and}\text{ }
  \xi_{m}^{(-a)}=aq^{-m}+O(a^{-m}q^{m(m+1)})\text{ }\text{as}\ m\to\infty.
  \label{eq:asym_xi_part}
\end{equation}
For a general $t$ we have the following result.

\begin{proposition}
\label{thm:asym_roots_t}
Suppose $0<q<a<1$ and let $t\in\mathbb{R}\cup\{\infty\}$, $t\neq-1$. Then
\begin{equation}
  \xi_{m}^{(t)}=aq^{-m}\!\left(1-\frac{(1-a)(t+a)}{t+1}
    \frac{(q/a;q)_{\infty}^{\,2}}{(q;q)_{\infty}^{\,2}}\, a^{m-1}+O(a^{2m})\right)
  \label{eq:asym_roots_t}
\end{equation}
and
\begin{equation}
  \frac{1}{\rho(\xi_{m}^{(t)})}
  =(q;q)_{\infty}^{\,2}\, a^{m}q^{-m^{2}}\left(1+O(ma^{m})\right)
  \label{eq:asym_weights_t}
\end{equation}
as $m\to\infty$.
\end{proposition}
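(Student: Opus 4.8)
\emph{Proof plan.} Since $t\neq-1$, put $\lambda:=\frac{t+a}{a(t+1)}$; by (\ref{eq:chareq_Phi}) the mass points $\xi_{m}^{(t)}$ are exactly the positive zeros of $g_{\lambda}:=\psi_{a}-\lambda\varphi_{a}$ (note $\Phi_{t}=a(t+1)g_{\lambda}$). The idea is to treat $g_{\lambda}$ as a perturbation of $\psi_{a}$, whose zeros $\sigma_{m}:=\xi_{m}^{(-a)}$ (here $\Phi_{-a}=a(1-a)\psi_{a}$, so $\sigma_{m}$ is the $m$-th zero of $\psi_{a}$) are, by (\ref{eq:asym_xi_part}), $\sigma_{m}=aq^{-m}+O(a^{-m}q^{m(m+1)})=aq^{-m}\bigl(1+O(a^{2m})\bigr)$; one then expects $\xi_{m}^{(t)}$ to be a tiny perturbation of $\sigma_{m}$, because $\varphi_{a}(\sigma_{m})/\psi_{a}'(\sigma_{m})$ will have relative size $a^{m}$.

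The first and technically central step is to determine, as $m\to\infty$, the behaviour of $\varphi_{a}$, $\varphi_{a}'$, $\psi_{a}'$ and $\psi_{a}''$ at the point $aq^{-m}$. Substituting $x=aq^{-m}$ into the power series (\ref{eq:def_phi_a_psi_a}), reindexing $k=m+i$, and factoring out the dominant scale, one is left in the limit with bilateral sums of the form $\sum_{i\in\mathbb{Z}}(-1)^{i}q^{i(i-1)/2}w^{i}$ and their $w$-derivatives, which the Jacobi triple product identity $\sum_{i\in\mathbb{Z}}(-1)^{i}q^{i(i-1)/2}w^{i}=(q;q)_{\infty}(w;q)_{\infty}(q/w;q)_{\infty}$ evaluates. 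Using $(a;q)_{\infty}=(1-a)(qa;q)_{\infty}$ this yields
\[
\varphi_{a}(aq^{-m})=(-1)^{m}(1-a)(q/a;q)_{\infty}\,a^{m}q^{-m(m+1)/2}\bigl(1+O(q^{m})\bigr).
\]
For $\psi_{a}'(aq^{-m})$ (and for $\psi_{a}(aq^{-m})$ itself) the naive leading term vanishes, being proportional to $\sum_{i\in\mathbb{Z}}(-1)^{i}q^{i(i-1)/2}=(q;q)_{\infty}(1;q)_{\infty}(q;q)_{\infty}=0$; retaining the factor $k=m+i$ inside the sum then brings in $\sum_{i\in\mathbb{Z}}i(-1)^{i}q^{i(i-1)/2}=-(q;q)_{\infty}^{3}$ (the $w$-derivative of the Jacobi triple product at $w=1$), whence
\[
\psi_{a}'(aq^{-m})=(-1)^{m+1}\frac{(q;q)_{\infty}^{2}}{(q/a;q)_{\infty}}\,a^{-1}q^{-m(m-1)/2}\bigl(1+O(q^{m})\bigr),
\]
together with $\varphi_{a}'(aq^{-m})\sim(-1)^{m}m(1-a)(q/a;q)_{\infty}\,a^{m-1}q^{-m(m-1)/2}$ and $\psi_{a}''(aq^{-m})/\psi_{a}'(aq^{-m})\sim 2ma^{-1}q^{m}$, obtained the same way (now also using $\sum_{i}i(i-1)(-1)^{i}q^{i(i-1)/2}=0$). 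Since $\sigma_{m}/(aq^{-m})=1+O(a^{2m})$, all of these continue to hold with $aq^{-m}$ replaced by $\sigma_{m}$.

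With these asymptotics in hand I would carry out a regular perturbation (Newton-type) argument. From $g_{\lambda}(\sigma_{m})=-\lambda\varphi_{a}(\sigma_{m})$ and $g_{\lambda}'(\sigma_{m})=\psi_{a}'(\sigma_{m})\bigl(1+O(ma^{m})\bigr)$, a Rouché comparison of $g_{\lambda}$ with $\psi_{a}$ on a circle of radius $\asymp a^{m}\,aq^{-m}$ about $\sigma_{m}$ gives a unique zero $\xi_{m}^{(t)}$ of $g_{\lambda}$ there, with expansion
\[
\xi_{m}^{(t)}=\sigma_{m}+\lambda\frac{\varphi_{a}(\sigma_{m})}{\psi_{a}'(\sigma_{m})}+\lambda^{2}\delta_{2}+\cdots,\qquad \delta_{2}=\frac{\varphi_{a}(\sigma_{m})}{\psi_{a}'(\sigma_{m})}\left(\frac{\varphi_{a}'(\sigma_{m})}{\psi_{a}'(\sigma_{m})}-\frac{\psi_{a}''(\sigma_{m})\varphi_{a}(\sigma_{m})}{2\psi_{a}'(\sigma_{m})^{2}}\right)\!.
\]
Inserting the formulas above, $\lambda\varphi_{a}(\sigma_{m})/\psi_{a}'(\sigma_{m})=-\frac{(1-a)(t+a)}{t+1}\frac{(q/a;q)_{\infty}^{2}}{(q;q)_{\infty}^{2}}\,a^{m}q^{-m}$, and combining with $\sigma_{m}=aq^{-m}(1+O(a^{2m}))$ this reproduces the leading and first-order part of (\ref{eq:asym_roots_t}). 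The key point is that the two terms in the parenthesis of $\delta_{2}$ are each of order $ma^{m}$ but cancel to leading order, so $\lambda^{2}\delta_{2}$ contributes only $O(a^{2m})$ relative to $aq^{-m}$, which yields the stated remainder. For the weights, I would evaluate the ($t$-independent) formula $1/\rho(x)=\frac{a}{1-a}\bigl(\psi_{a}(x)\varphi_{a}'(x)-\varphi_{a}(x)\psi_{a}'(x)\bigr)$ from (\ref{eq:Nex_OG_measure}) at $x=\xi_{m}^{(t)}$; there $\psi_{a}(\xi_{m}^{(t)})=(\xi_{m}^{(t)}-\sigma_{m})\psi_{a}'(\sigma_{m})(1+o(1))$ is small, so the first product is $O(ma^{m})$ times the second, while $\varphi_{a}$ and $\psi_{a}'$ change by a relative $O(ma^{m})$ in passing from $aq^{-m}$ to $\xi_{m}^{(t)}$, hence
\[
\frac{1}{\rho(\xi_{m}^{(t)})}=-\frac{a}{1-a}\,\varphi_{a}(aq^{-m})\,\psi_{a}'(aq^{-m})\bigl(1+O(ma^{m})\bigr)=(q;q)_{\infty}^{2}\,a^{m}q^{-m^{2}}\bigl(1+O(ma^{m})\bigr),
\]
using the two displayed asymptotics and $-m(m+1)/2-m(m-1)/2=-m^{2}$; this is (\ref{eq:asym_weights_t}).

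The main obstacle is making the perturbation rigorous with constants uniform in $m$: one must (i) justify the Rouché step, which needs a lower bound for $|g_{\lambda}|$ on the comparison circle and thus two-sided control of $\psi_{a}$ near its zeros; (ii) bound the remainder of the perturbation expansion, i.e. $\psi_{a}''$ and $\varphi_{a}''$ on a neighbourhood of $\sigma_{m}$; and, most delicately, (iii) keep track of the several competing small scales ($a^{m}$, $q^{m}$, the first-order shift of size $\asymp a^{m-1}$, and the super-exponentially small $\sigma_{m}-aq^{-m}$), exploiting $0<q<a<1$ so that $(q/a)^{m}\to0$ collapses the spurious error terms, and in particular verifying the order-$ma^{2m}$ cancellation in $\delta_{2}$ that is responsible for the clean $O(a^{2m})$ error in (\ref{eq:asym_roots_t}). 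The algebra itself — the Jacobi triple product evaluations and the power-of-$q$ bookkeeping — is routine.
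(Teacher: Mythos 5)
Your plan is sound and, up to the uniformity issues you yourself flag, it reproduces the proposition with the correct constants; but it follows a genuinely different route from the paper. The paper does not perturb off the zeros of $\psi_{a}$: it writes $x=aq^{-m}+\epsilon$ directly in the characteristic equation (\ref{eq:chareq_Phi}) and uses the Daalhuis-based large-$z$ asymptotics of $\,_{1}\phi_{1}(0;w;q,z)$ from the Appendix (Theorems \ref{thm:asympt_q-confhyp} and \ref{thm:asymp_der_q-confhyp}, built on (\ref{eq:daalhuis}) and (\ref{eq:ident_qPochhammer})) to recast the equation as a perturbed sine equation $\sin\!\big(\pi\ln(x/a)/\ln(q)\big)=\text{(controlled small term)}$; solving this gives both asymptotic terms of $\xi_{m}^{(t)}$, and the same appendix results give the values of $\varphi_{a}$, $\varphi_{a}'$, $\psi_{a}'$ at $\xi_{m}^{(t)}$ needed for the weights. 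Your alternative — local evaluation of $\varphi_{a},\varphi_{a}',\psi_{a}',\psi_{a}''$ at $aq^{-m}$ via reindexed series and the Jacobi triple product, followed by a quantitative Newton/Rouch\'e step off $\sigma_{m}=\xi_{m}^{(-a)}$ — is legitimate, and your local data agree with the paper's (your $\varphi_{a}(aq^{-m})$ and $\psi_{a}'(aq^{-m})$ match the values the paper obtains at $\xi_{m}^{(t)}$, and the first-order shift $\lambda\varphi_{a}(\sigma_{m})/\psi_{a}'(\sigma_{m})$ reproduces (\ref{eq:asymp_eps_m})). The cancellation you point out in $\delta_{2}$ is real: both $\varphi_{a}'/\psi_{a}'$ and $\varphi_{a}\psi_{a}''/(2\psi_{a}'^{\,2})$ equal $-m(1-a)(q/a;q)_{\infty}^{\,2}(q;q)_{\infty}^{-2}a^{m}$ to leading order, and this is exactly what upgrades the error from $O(ma^{2m})$ to the stated $O(a^{2m})$; the paper's route sidesteps this bookkeeping because the second-order term never appears in its sine-equation form. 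What your approach buys is independence from the appendix machinery; what the paper's buys is that the uniform error control you list as obstacles is already packaged in Theorems \ref{thm:asympt_q-confhyp}--\ref{thm:asymp_der_q-confhyp}.

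One step is missing from your plan: identifying the zero produced by Rouch\'e with the $m$-th mass point. Rouch\'e gives a unique zero of $g_{\lambda}$ in a small disc around $\sigma_{m}$, but you must still show that this zero is $\xi_{m}^{(t)}$ in the global increasing ordering of the zeros of $\Phi_{t}$. The paper closes this point using the interlacing inequalities $\xi_{m-1}^{(-1)}<\xi_{m}^{(t)}<\xi_{m}^{(-1)}$ stated just before the proposition, together with (\ref{eq:asym_xi_part}): since your disc around $\sigma_{m}\approx aq^{-m}$ lies inside $\big(\xi_{m-1}^{(-1)},\xi_{m}^{(-1)}\big)\approx(q^{-m+1},q^{-m})$ for large $m$ (here $q<a<1$ is used), and this interval contains exactly one zero of $\Phi_{t}$, namely $\xi_{m}^{(t)}$, the identification follows. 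Add this observation, and supply the lower bound for $|\psi_{a}|$ on the comparison circle and the uniform control of $\varphi_{a}''$, $\psi_{a}''$ near $\sigma_{m}$ that you already anticipate, and your argument is complete.
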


\begin{proof}
  We seek a root $\xi$ of the characteristic equation in vicinity of $aq^{-m}$
  while writing $\xi=aq^{-m}+\epsilon$, with $|q^{m}\epsilon|\ll1$, and
  assuming $m$ to be sufficiently large.  Referring to
  Theorem~\ref{thm:asympt_q-confhyp} one readily concludes that
\[
\frac{A(x)}{A(x/a)}=D(x)\exp\!\left(-\frac{\ln(a)}{\ln(q)}\ln(x)\right)
\text{where}\ D(x)=O(1)\ \text{as}\ x\to+\infty,
\]
and therefore, in the considered asymptotic domain, the characteristic
equation can be given the form
\[
\sin\!\left(\frac{\pi\ln(x/a)}{\ln(q)}\right)
=O\!\left(x^{-\ln(a)/\ln(q)}\right).
\]
Hence, for all sufficiently large $m$, a solution $\xi=\xi(m)$ is sure to
exist such that $\xi(m)=aq^{-m}\left(1+O(a^{m})\right)$ as $m\to\infty$. To
get the second asymptotic term explicitly note that, in view of
(\ref{eq:daalhuis}),
\begin{eqnarray*}
  \frac{A(aq^{-m})}{A(q^{-m})}
  & = & a^{m+1/2}\exp\!\left(-\frac{\ln^{2}(a)}{2\ln(q)}\right)
  \frac{\left|\left(\tilde{q}e^{-2i\beta(a)};
        \tilde{q}\right)_{\!\infty}\right|^{2}}
  {(\tilde{q};\tilde{q})_{\infty}^{\,2}}\\
  \noalign{\smallskip}
  & = & -\frac{(a;q)_{\infty}(q/a;q)_{\infty}}{(q;q)_{\infty}^{\,2}}
  \frac{\pi a^{m}}{\ln(q)\sin(\beta(a))}.
\end{eqnarray*}
Now it is somewhat tedious but straightforward to apply
Theorem~\ref{thm:asympt_q-confhyp} to (\ref{eq:chareq_Phi}) in order to derive
that
\begin{equation}
  \epsilon(m)=-aq^{-m}\,\frac{(1-a)(t+a)}{t+1}
  \frac{(q/a;q)_{\infty}^{\,2}}{(q;q)_{\infty}^{\,2}}\, a^{m-1}
  \left(1+O(a^{m})\right).
  \label{eq:asymp_eps_m}
\end{equation}
Hence the sequence of roots $\{\xi(m)\}$ we have found meets the asymptotic
behavior as claimed in the proposition. To conclude the proof it suffices to
observe that (\ref{eq:asymp_eps_m}) along with (\ref{eq:asym_xi_part})
guarantee $\xi(m)$ to be sufficiently close to $\xi_{m}^{(-a)}$ and,
consequently, $\xi_{m-1}^{(-1)}<\xi(m)<\xi_{m}^{(-1)}$, implying that
$\xi(m)=\xi_{m}^{(t)}$ is actually the $m$th root of $\Phi_{t}(x)$.

As far as the weights are concerned, sticking to notation (\ref{eq:Az}) and
making use of (\ref{eq:asym_roots_t}) it is straightforward to derive that
\[
A(\xi_{m}^{(t)})=A(a)a^{m}q^{-(m+1)m/2}\left(1+O(ma^{m})\right).
\]
Clearly,
\[
\sin\!\left(\beta(\xi_{m}^{(t)})\right)
=(-1)^{m}\sin(\beta(a))\left(1+O(a^{m})\right).
\]
Taking into account (\ref{eq:daalhuis}) and Theorem~\ref{thm:asympt_q-confhyp}
one finds that
\[
\varphi_{a}(\xi_{m}^{(t)})=(-1)^{m}(q/a;q)_{\infty}(1-a)a^{m}q^{-(m+1)m/2}
\left(1+O(ma^{m})\right).
\]
To analyze
\[
\psi_{a}'(\xi_{m}^{(t)})
=\frac{1}{a}\,\frac{\partial\,_{1}\phi_{1}(0;q/a;q,z)}
{\partial z}\Bigg|_{z=\xi_{m}^{(t)}/a}
\]
for $m$ large one can make use of Theorem~\ref{thm:asymp_der_q-confhyp} along
with (\ref{eq:ident_qPochhammer}) to obtain
\[
\psi_{a}'(\xi_{m}^{(t)})
=(-1)^{m+1}\frac{(q;q)_{\infty}^{\,2}}
{a\,(q/a;q)_{\infty}}\, q^{-m(m-1)/2}\left(1+O(ma^{m})\right).
\]
Similarly one finds that $\varphi_{a}'(\xi_{m}^{(t)})=O(ma^{m}q^{-m(m-1)/2})$.
Consequently, in view of (\ref{eq:Nex_OG_measure}),
\begin{eqnarray*}
  \frac{1}{\rho(\xi_{m}^{(t)})}
  & = & \frac{a\,\varphi_{a}(\xi_{m}^{(t)})}{1-a}
  \left(\frac{t+a}{a\,(t+1)}\,\varphi_{a}'(\xi_{m}^{(t)})
    -\psi_{a}'(\xi_{m}^{(t)})\right)\\
  & = & -\frac{a}{1-a}\,\varphi_{a}(\xi_{m}^{(t)})\,
  \psi_{a}'(\xi_{m}^{(t)})\left(1+O(ma^{m})\right).
\end{eqnarray*}
Relation (\ref{eq:asym_weights_t}) readily follows.
\end{proof}
%%%%%%%%%%%%%%%%%%%%%%%%%%%%%%%%%%%%%%%%%%%%%%%%%%%%%%%%%%%%%%%%%%%%%%%

In what follows we again admit any value of $a$ lying between $q$ and
$q^{-1}$.

\begin{lemma}
\label{prop:AD-BC_in_phipsi}
With the notation introduced in (\ref{eq:def_phi_a_psi_a}) it holds true that
\begin{equation}
  \varphi_{a}(z)\psi_{a}(qz)-a\psi_{a}(z)\varphi_{a}(qz)
  =1-a\ \ \mbox{ if}\ \ a\neq1,\label{eq:determ_Nevan_a_neq_1}
\end{equation}
and
\[
2q\big(\varphi_{1}(z)\chi_{1}(qz)-\chi_{1}(z)\varphi_{1}(qz)\big)
+z\big(q\varphi_{1}(z)\varphi_{1}'(qz)
-\varphi_{1}'(z)\varphi_{1}(qz)\big)+\varphi_{1}(z)\varphi_{1}(qz)=1,
\]
for all $z\in\mathbb{C}$.
\end{lemma}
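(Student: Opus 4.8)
The key identity (\ref{eq:AD-BC=00003D1}) from the Nevanlinna theory, $A(z)D(z)-B(z)C(z)=1$, already does all the work; the lemma is just a translation of this determinant identity into the language of the hypergeometric series $\varphi_a$, $\psi_a$, $\chi_1$. So my plan is to substitute the explicit formulas from Theorem~\ref{thm:Nevan_ABCD} into (\ref{eq:AD-BC=00003D1}) and simplify.

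For the case $a\neq1$ I would compute, using (\ref{eq:Nevan_ABCD}),
\[
A(a,q;z)D(a,q;z)-B(a,q;z)C(a,q;z)
=\frac{a}{(1-a)^{2}}\Big(\big(\varphi_a(qz)-\psi_a(qz)\big)\big(\varphi_a(z)-\psi_a(z)\big)
-\big(a\psi_a(z)-\varphi_a(z)\big)\big(\psi_a(qz)-a\varphi_a(qz)\big)\Big).
\]
Expanding the product and collecting the four bilinear terms $\varphi_a(qz)\varphi_a(z)$, $\varphi_a(qz)\psi_a(z)$, $\psi_a(qz)\varphi_a(z)$, $\psi_a(qz)\psi_a(z)$, the coefficients of $\varphi_a(qz)\varphi_a(z)$ and of $\psi_a(qz)\psi_a(z)$ cancel, while the remaining two terms combine (after pulling out a factor $a$) into $\dfrac{a}{1-a}\big(\psi_a(z)\varphi_a(qz)-\varphi_a(z)\psi_a(qz)\big)$ — note the argument pattern on the left-hand side of (\ref{eq:determ_Nevan_a_neq_1}) is the negative of this, so setting the whole thing equal to $1$ and rearranging gives exactly $\varphi_a(z)\psi_a(qz)-a\psi_a(z)\varphi_a(qz)=1-a$. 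This is entirely routine bookkeeping; the only thing to watch is the sign and the placement of the factor $a$.

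For the case $a=1$ the same strategy applies but with the formulas (\ref{eq:Nevan_ABCD_a1}) in terms of $\varphi_1$ and $\chi_1$. Substituting into $AD-BC=1$ and expanding produces a combination of the bilinear forms $\varphi_1(z)\varphi_1(qz)$, $\chi_1(z)\varphi_1(qz)$, $\varphi_1(z)\chi_1(qz)$, $\varphi_1'(z)\varphi_1(qz)$, $\varphi_1(z)\varphi_1'(qz)$, and (a priori) $\chi_1(z)\chi_1(qz)$, etc.; one must check that all the unwanted terms (those not appearing in the claimed formula, in particular the $\chi_1\chi_1$ term and any $\varphi_1'\varphi_1'$ or $\varphi_1'\chi_1$ terms) cancel, and that the survivors assemble into $2q\big(\varphi_1(z)\chi_1(qz)-\chi_1(z)\varphi_1(qz)\big)+z\big(q\varphi_1(z)\varphi_1'(qz)-\varphi_1'(z)\varphi_1(qz)\big)+\varphi_1(z)\varphi_1(qz)$. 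Alternatively — and this is probably cleaner — one obtains the $a=1$ identity by differentiating (\ref{eq:determ_Nevan_a_neq_1}) with respect to $a$ at $a=1$, exactly as the $a=1$ Nevanlinna functions were obtained from the generic ones in the proof of Theorem~\ref{thm:Nevan_ABCD}; one needs $\partial_a\varphi_a(z)|_{a=1}$ and $\partial_a\psi_a(z)|_{a=1}$ expressed via $\chi_1$ and $\varphi_1'$ (namely $\partial_a\,{}_1\phi_1(0;qa;q,z)|_{a=1}=q\,\chi_1(z)$ and $\partial_a\,{}_1\phi_1(0;qa^{-1};q,a^{-1}z)|_{a=1}=-q\,\chi_1(z)-z\varphi_1'(z)$, up to the overall structure), after which the $a=1$ identity drops out by the Leibniz rule applied to the already-proven relation for general $a$. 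The main obstacle is purely computational — keeping track of signs and of the factors $q$ coming from the arguments $qz$ versus $z$ — but no new idea is required beyond (\ref{eq:AD-BC=00003D1}) and Theorem~\ref{thm:Nevan_ABCD}.
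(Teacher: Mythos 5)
Your proposal is correct and is exactly the paper's argument: the paper proves the lemma by substituting the explicit formulas of Theorem~\ref{thm:Nevan_ABCD} into the relation (\ref{eq:AD-BC=00003D1}), for both $a\neq1$ and $a=1$ (your alternative of differentiating (\ref{eq:determ_Nevan_a_neq_1}) in $a$ at $a=1$ also works and mirrors how the paper obtained (\ref{eq:Nevan_ABCD_a1}) itself). One cosmetic slip: the overall prefactor $a/(1-a)^2$ in your displayed expression is not an exact factorization (only $AD$ carries the factor $a$); the correct expansion gives $AD-BC=\big(\varphi_{a}(z)\psi_{a}(qz)-a\psi_{a}(z)\varphi_{a}(qz)\big)/(1-a)$, which yields the stated identity just as you conclude.
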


\begin{proof}
  These identities follow from (\ref{eq:AD-BC=00003D1}) and, again, from
  Theorem~\ref{thm:Nevan_ABCD}.
\end{proof}

Let us examine a bit more closely two particular N-extremal measures $\mu_{t}$
described in Proposition~\ref{prop:Nex_OG_measure}, with $t=-1$ and
$t=-a$. They correspond to the distinguished cases $t=\alpha$ if $a\in(q,1)$
or $a\in(1,q^{-1})$, respectively (cf. (\ref{eq:quant_alpha})). As already
mentioned, if $t=\alpha$ then the corresponding self-adjoint extension of the
underlying Jacobi matrix is the Friedrichs extension, and the measure
$\mu_{t}$ is necessarily a Stieltjes measure. In the case $t=-1$ the
orthogonality relation for the orthonormal polynomials $P_{n}(a,q;x)$ reads
\begin{equation}
  -\sum_{k=0}^{\infty}\frac{\varphi_{a}(q\xi_{k})}
  {\varphi_{a}'(\xi_{k})}\, P_{n}(a,q;\xi_{k})P_{m}(a,q;\xi_{k})
  =\delta_{mn}
  \label{eq:OGrel_t_eq_-1}
\end{equation}
where $\{\xi_{k};\, k\in\mathbb{Z}_+\}$ are the zeros of the function
$\varphi_{a}$. Actually, from (\ref{eq:Nex_OG_measure}) and
(\ref{eq:determ_Nevan_a_neq_1}) one infers that
$\rho(\xi_k)=-\varphi_{a}(q\xi_{k})/\varphi_{a}'(\xi_{k})$ if
$\varphi_{a}(\xi_{k})=0$. Similarly, the same orthogonality relation for
$t=-a$ reads
\begin{equation}
  -\frac{1}{a}\sum_{k=0}^{\infty}
  \frac{\psi_{a}(q\eta_{k})}{\psi_{a}'(\eta_{k})}\, P_{n}(a,q;
  \eta_{k})P_{m}(a,q;\eta_{k})=\delta_{mn}
  \label{eq:OGrel_t_eq_-a}
\end{equation}
where $\{\eta_{k};\, k\in\mathbb{Z}_+\}$ are the zeros of the function
$\psi_{a}$. One can refer once more to \cite[Thm.~2.2,
Rem.~2.3]{AnnabyMansour} where it is shown (if rewritten in our notation) that
 \[
 \xi_{k}=q^{-k}\left(1+O(q^{k})\right)\text{ }\text{and}\text{ }\text{ } 
\eta_{k}=aq^{-k}\left(1+O(q^{k})\right)\ \ \ \text{as }\ \ k\to\infty.
 \]

\begin{remark}
  The orthogonality relation (\ref{eq:OGrel_t_eq_-1}) has been derived already
  in \cite[Theorem 3.6.]{Koelink}. This is the unique orthogonality relation
  for the polynomials $P_{n}(a,q;x)$ if $a\in(0,q]$ (the determinate case),
  and an example of an N-extremal orthogonality relation if
  $a\in(q,1)$. Similarly, (\ref{eq:OGrel_t_eq_-a}) is the unique orthogonality
  relation if $a\geq q^{-1}$. Of course, (\ref{eq:OGrel_t_eq_-1}) and
  (\ref{eq:OGrel_t_eq_-a}) coincide for $a=1$.
\end{remark}

\begin{remark}
  In \cite[Section~1]{BergChristensen}, an explicit expression has been found
  for the measures of orthogonality $\mu_{\varphi}$ corresponding to constant
  Pick functions $\varphi(z)=\beta+i\gamma$, with $\beta\in\mathbb{R}$ and
  $\gamma>0$. Let us call these measures
  $\mu_{\beta,\gamma}=\mu_{\beta,\gamma}(a,q)$. It turns out that
  $\mu_{\beta,\gamma}$ is an absolutely continuous measure supported on
  $\mathbb{R}$ with the density
\[
\frac{\mbox{d}\mu_{\beta,\gamma}}{\mbox{d}x}
=\frac{\gamma}{\pi}\big(\big(\beta B(a,q;x)-D(a,q;x)\big)^{2}
+\gamma^{2}B(a,q;x)^{2}\big)^{-1}.
\]
In our case, referring to (\ref{eq:Nevan_ABCD}), (\ref{eq:Nevan_ABCD_a1}), we
get the probability density
\[
\frac{\mbox{d}\mu_{\beta,\gamma}}{\mbox{d}x}
=\frac{\gamma\,(1-a)^{2}}{\pi\!\left(\big((\beta+1)a\psi_{a}(x)
-(\beta+a)\varphi_{a}(x)\big)^{2}+\gamma^{2}\big(a\psi_{a}(x)
-\varphi_{a}(x)\big)^{2}\right)}\,,
\]
provided $1\neq a\in(q,q^{-1})$, and
\begin{eqnarray*}
\hskip-1.8em 
&  & \frac{\mbox{d}\mu_{\beta,\gamma}}{\mbox{d}x}\,=\,\frac{\gamma}{\pi}\\
\hskip-1.8em 
&  & \times\!\left(\!\big(2q(\beta+1)\chi_{1}(x)-\beta\varphi_{1}(x)
+(\beta+1)x\varphi_{1}'(x)\big)^{2}+\gamma^{2}\big(2q\chi_{1}(x)
-\varphi_{1}(x)+x\varphi_{1}'(x)\big)^{2}\right)^{\!-1}\!\!\!,
\end{eqnarray*}
provided $a=1$. Letting $\beta=-1$ or $\beta=-a$ and $\gamma>0$ arbitrary, one
obtains comparatively simple and nice orthogonality relations for the
polynomials $P_{n}(a,q;x)$, namely
\[
\int_{\mathbb{R}}\frac{P_{m}(a,q;x)P_{n}(a,q;x)}
{\gamma\big(a\psi_{a}(x)-\varphi_{a}(x)\big)^{2}
+\gamma^{-1}(a-1)^{2}\varphi_{a}(x)^{2}}\,\mbox{d}x
=\frac{\pi}{(a-1)^{2}}\,\delta_{mn}
\]
and
\[
\int_{\mathbb{R}}\frac{P_{m}(a,q;x)P_{n}(a,q;x)}
{\gamma\big(a\psi_{a}(x)-\varphi_{a}(x)\big)^{2}
+\gamma^{-1}(a-1)^{2}a^{2}\psi_{a}(x)^{2}}\,\mbox{d}x
=\frac{\pi}{(a-1)^{2}}\,\delta_{mn},
\]
valid for all $m,n\in\mathbb{Z}_{+}$ and $a\in(q,q^{-1})$, $a\neq1$.  If
$a=1$, a similar orthogonality relation takes the form
\[
\int_{\mathbb{R}}\frac{P_{m}(1,q;x)P_{n}(1,q;x)}
{\gamma\big(2q\chi_{1}(x)+x\varphi_{1}'(x)-\varphi_{1}(x)\big)^{2}
+\gamma^{-1}\varphi_{1}(x)^{2}}\,\mbox{d}x=\pi\delta_{mn}.
\]
\end{remark}

\section{The moment sequence}

\subsection{Passing to the determinate case}

Let $\mu$ be any measure of orthogonality for the orthonormal polynomials
$P_{n}(a,q;x)$ introduced in (\ref{eq:P_n_rel_F_n}). Denote by
\[
m_{n}(a,q)=\int_{\mathbb{R}}x^{n}\,\mbox{d}\mu(x),\ \ n\in\mathbb{Z}_{+},
\]
the corresponding moment sequence. It is clear from Favard's theorem,
however, that the moments do not depend on the particular choice of
the measure of orthogonality. It is even known that
\begin{equation}
m_{n}(a,q)=\langle e_{0},J(a,q)^{n}e_{0}\rangle,\ \ n\in\mathbb{Z}_{+},\label{eq:momentseq_J}
\end{equation}
where $J(a,q)$ is the Jacobi matrix defined in (\ref{eq:alpha_beta}),
(\ref{eq:J}), and $e_{0}$ is the first vector of the canonical basis
in $\ell^{2}(\mathbb{Z}_{+})$. Whence $m_{n}(a,q)$ is a polynomial
in $a$ and $q^{-1}$. Consequently, in order to compute the moments
one can admit a wider range of parameters than that we were using
up to now, namely $0<q<1$ and $q<a<q^{-1}$. This observation can
be of particular importance for the parameter $q$ since the properties
of the matrix operator $J(a,q)$ would change dramatically if $q$
was allowed to take values $q>1$. We wish to stick, however, to the
widely used convention according to which the modulus of $q$ is smaller
than $1$. This is why we replace the symbol $q$ by $p$ in this
section whenever this restriction is relaxed. Concerning the parameter
$a$, it is always supposed to be positive.

Put, for $p>0$ and $a>0$,
\begin{equation}
\omega_{n}(a,p)=\sum_{k=0}^{n}\left[\begin{array}{c}
n\\
k
\end{array}\right]_{p}p^{-k(n-k)}a^{k},\ \text{ }n\in\mathbb{Z}_{+}.\label{eq:def_omega_n}
\end{equation}
The meaning of the $q$-binomial ceofficient in (\ref{eq:def_omega_n})
is the standard one, cf. \cite[Eq.~(I.39)]{GasperRahman}. Let us
remark that $\omega_{n}(a,p)$ can be expressed in terms of the continuous
$q$-Hermite polynomials $H_{n}(x;q)$, namely

\begin{equation}
\omega_{n}(a,p)=\,_{2}\phi_{0}(p^{n},0;\,;p^{-1},p^{-n}a)=a^{n/2}\, H_{n}\Big(\,\frac{1}{2}\left(a^{1/2}+a^{-1/2}\right);p^{-1}\Big),\label{eq:omega_Hn}
\end{equation}
see \cite{KoekoekLeskySwarttouw}.

As before, the monic polynomials $F_{n}(a,p;x)$ are generated by
the recurrence (\ref{eq:recur_def_F_n}), with $F_{-1}(a,p;x)=0$
and $F_{0}(a,p;x)=1$ (writing $p$ instead of $q$). The following
proposition is due to Van~Assche and is contained in \cite[Theorem~2]{VanAssche}.

\begin{proposition} \label{prop:lim_VanAssche} For $p>1$ and $x\neq0$
one has
\[
\lim_{n\to\infty}\, x^{-n}F_{n}(a,p;x)=\sum_{k=0}^{\infty}\frac{\omega_{k}(a,p)}{(p;p)_{k}}\left(\frac{p}{x}\right)^{k}.
\]
\end{proposition}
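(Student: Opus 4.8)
The plan is to start from the generating function identity (\ref{eq:gener_func_F_n}), which after replacing $q$ by $p$ reads
\[
\sum_{n=0}^{\infty}p^{n(n-1)/2}F_{n}(a,p;x)(-t)^{n}
=\sum_{k=0}^{\infty}\frac{p^{k(k-1)/2}(-xt)^{k}}{(t;p)_{k+1}(at;p)_{k+1}}\,.
\]
Since we want the behaviour of $F_{n}(a,p;x)$ for fixed $x\neq0$ and large $n$, and since for $p>1$ the factor $p^{n(n-1)/2}$ blows up, it is natural to pass to the variable $s=t/x$ or, better, to rescale $t$ so that the radius of convergence is controlled. First I would substitute $t\mapsto p^{-1}t$ repeatedly, or more efficiently observe that $x^{-n}F_{n}$ is the quantity whose limit we want; so I would divide the $n$-th term by $x^{n}$, i.e. consider the series $\sum_{n}p^{n(n-1)/2}\bigl(x^{-n}F_{n}(a,p;x)\bigr)(-xt)^{n}$, and set $u=xt$. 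The claim is then equivalent to a statement about the coefficients of the right-hand side as a power series in $u$.

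The key step is to recognize that for $p>1$ one has $|p^{k(k-1)/2}|\to\infty$, so the right-hand side as written is only a formal series; the correct reading is to expand each Pochhammer reciprocal $1/(t;p)_{k+1}$ and $1/(at;p)_{k+1}$ and collect powers of $t$. I would use the $p$-binomial formula (as in the Corollary's proof, with $q$ replaced by $p$, noting $|p^{-1}|<1$),
\[
\frac{1}{(t;p)_{k+1}}=\sum_{j=0}^{\infty}\frac{(p^{k+1};p)_{j}}{(p;p)_{j}}\,t^{j}\,,
\]
to write $p^{n(n-1)/2}F_{n}(a,p;x)$ explicitly as a finite sum over $k$ of products of such coefficients (this is exactly formula (\ref{eq:F_n_explicit})). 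Then I would divide by $x^{n}$, isolate the terms that survive in the limit $n\to\infty$, and identify the limit. Concretely, in (\ref{eq:F_n_explicit}) the coefficient of $x^{j}$ carries $p^{-n(n-1)/2}$ times a sum of $a^{k}$ with Pochhammer weights; the dominant contribution as $n\to\infty$ comes from $j=n$ (the leading coefficient, which is $1$ for a monic polynomial) and the lower-order corrections $j=n-1,n-2,\dots$ assemble into the stated series. I would reindex by $\ell=n-j$ and show the coefficient of $(p/x)^{\ell}$ converges to $\omega_{\ell}(a,p)/(p;p)_{\ell}$.

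The main obstacle is the bookkeeping of the double sum in (\ref{eq:F_n_explicit}): one must show that the inner sum $\sum_{k=0}^{\ell}(p^{k+1};p)_{n-\ell}(p^{\ell-k+1};p)_{n-\ell}a^{k}$, after dividing by $(p;p)_{n-\ell}^{\,2}$ (coming from the $(p;p)_{j}^{-2}$ with $j=n-\ell$) and multiplying by the sign and $p$-power factors, tends as $n\to\infty$ to $(p;p)_{\ell}^{\,-2}$ times something proportional to $\omega_{\ell}(a,p)$. The clean way is to use $(p^{k+1};p)_{n-\ell}/(p;p)_{n-\ell}=\binom{n-\ell+k}{k}_{p}\to$ a ratio of infinite Pochhammers (for $p>1$ one has $(p^{m};p)_{\infty}$ interpreted via $(p;p)_{\infty}$, which diverges, so one keeps the finite ratio and lets $n\to\infty$ carefully, extracting the power of $p$ that matches $p^{-k(\ell-k)}$). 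Once this combinatorial limit is pinned down, comparing with the definition (\ref{eq:def_omega_n}) of $\omega_{\ell}(a,p)$ finishes the identification; uniform control of the tail in $\ell$ for $|p/x|$ small, together with the observation that both sides are analytic in $1/x$, upgrades pointwise convergence of coefficients to convergence of the series and extends it to all $x\neq0$.
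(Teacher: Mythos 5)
Your route is necessarily different from the paper's, because the paper does not prove this proposition at all: it is imported as \cite[Theorem~2]{VanAssche}, so any self-contained argument is added value. Your plan is sound, and the identification you flag as the ``main obstacle'' does check out. Putting $\ell=n-j$ in (\ref{eq:F_n_explicit}) and converting base-$p$ Pochhammer symbols to base $p^{-1}$ via $(p^{c};p)_{m}=(-1)^{m}p^{mc+m(m-1)/2}(p^{-c};p^{-1})_{m}$, the coefficient of $x^{-\ell}$ in $x^{-n}F_{n}(a,p;x)$ equals
\begin{equation*}
c_{n,\ell}=(-1)^{\ell}p^{-\ell(\ell-1)/2}\sum_{k=0}^{\ell}
\frac{(p^{-k-1};p^{-1})_{n-\ell}\,(p^{k-\ell-1};p^{-1})_{n-\ell}}{(p^{-1};p^{-1})_{n-\ell}^{\,2}}\,a^{k}
\ \xrightarrow[n\to\infty]{}\
\frac{(-1)^{\ell}p^{-\ell(\ell-1)/2}}{(p^{-1};p^{-1})_{\ell}}
\sum_{k=0}^{\ell}\frac{(p^{-1};p^{-1})_{\ell}\,a^{k}}{(p^{-1};p^{-1})_{k}(p^{-1};p^{-1})_{\ell-k}}\,,
\end{equation*}
and this limit is exactly $p^{\ell}\omega_{\ell}(a,p)/(p;p)_{\ell}$, because the $p$-binomial coefficient in (\ref{eq:def_omega_n}) satisfies $\bigl[{\textstyle{\ell\atop k}}\bigr]_{p}\,p^{-k(\ell-k)}=\bigl[{\textstyle{\ell\atop k}}\bigr]_{p^{-1}}$ and $(p;p)_{\ell}=(-1)^{\ell}p^{\ell(\ell+1)/2}(p^{-1};p^{-1})_{\ell}$. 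So, modulo bookkeeping, your coefficient-extraction argument is a complete and elementary alternative to citing Van Assche.

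Two points should be tightened. First, do not ``start from'' (\ref{eq:gener_func_F_n}) for $p>1$: the series on the left has zero radius of convergence in $t$, so the identity is only formal there. The clean way is to use (\ref{eq:F_n_explicit}) directly and note that, for fixed $n$, both sides are rational functions of $q$ which agree on $(0,1)$, hence agree at $q=p>1$. Second, your passage from coefficientwise convergence to convergence of the sum is weaker than needed: pointwise convergence for $|p/x|$ small plus analyticity alone does not transport a limit to all $x\neq0$ (one would need local uniform boundedness, e.g.\ Vitali's theorem). The direct fix is easier and gives every $x\neq0$ at once: since each factor of $(p^{-c};p^{-1})_{m}$ with $c\geq1$ lies in $(0,1)$, one has the uniform bound $|c_{n,\ell}|\leq(\ell+1)\max(1,a)^{\ell}\,p^{-\ell(\ell-1)/2}\,(p^{-1};p^{-1})_{\infty}^{-2}$ for all $n\geq\ell$, so the series $\sum_{\ell}c_{n,\ell}x^{-\ell}$ is dominated, uniformly in $n$, by a convergent series, and Tannery's theorem (dominated convergence for sums) yields the stated limit for every $x\neq0$. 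With these two repairs your proposal is correct.
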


Note that if $p>1$ then the Jacobi matrix $J(a,p)$ represents a
compact (even trace class) operator on $\ell^{2}(\mathbb{Z}_{+})$.
In particular, this implies that the Hamburger moment problem is determinate.
Several additional useful facts are known in this case which we summarize
in the following remark.

\begin{remark} \label{rem:CoulombOP_results} In \cite[Section~3]{StampachStovicek_CoulombOP}
it is noted that if $\{\beta_{n}\}_{n=0}^{\infty}$ is a real sequence
belonging to $\ell^{1}(\mathbb{Z}_{+})$, $\{\alpha_{n}\}_{n=0}^{\infty}$
is a positive sequence belonging to $\ell^{2}(\mathbb{Z}_{+})$ and
$\{F_{n}(x)\}_{n=0}^{\infty}$ is a sequence of monic polynomials
defined by the recurrence
\[
F_{n+1}(x)=(x-\beta_{n})F_{n}(x)-\alpha_{n-1}^{\,2}F_{n-1}(x),\ \ n\geq0,
\]
with $F_{0}(x)=1$ and (conventionally) $F_{-1}(x)=0$, then
\begin{equation}
\lim_{n\to\infty}\, x^{-n}F_{n}(x)=\mathcal{G}(x^{-1})\ \ \text{for}\ \ x\neq0\label{eq:lim_Fn_calG}
\end{equation}
where $\mathcal{G}(z)$ is an entire function. Moreover, let $\mu$
be the (necessarily unique) measure of orthogonality for the sequence
of polynomials $\{F_{n}(x)\}$. Then the Stieltjes transform of $\mu$
reads
\begin{equation}
\int_{\mathbb{R}}\frac{\mbox{d}\mu(x)}{1-zx}=\frac{\tilde{\mathcal{G}}(z)}{\mathcal{G}(z)}\label{eq:Stieltjes_mu_general}
\end{equation}
where $\tilde{\mathcal{G}}(z)$ is an entire function associated in
an analogous manner with the shifted sequences $\{\tilde{\alpha}_{n}=\alpha_{n+1}\}_{n=0}^{\infty}$,
$\{\tilde{\beta}_{n}=\beta_{n+1}\}_{n=0}^{\infty}$. \end{remark}

\begin{theorem} Let $p>1$ and $x\neq0$. Then
\[
\lim_{n\to\infty}\, x^{-n}F_{n}(a,p;x)=\mathcal{G}(x^{-1})
\]
where
\begin{equation}
\mathcal{G}(z)=(z;p^{-1})_{\infty}\,\,_{1}\phi_{1}(0;z;p^{-1},az)=\sum_{k=0}^{\infty}\frac{\omega_{k}(a,p)}{(p;p)_{k}}\,(pz)^{k}\label{eq:calG_phi11_series}
\end{equation}
is an entire function obeying the second-order $q$-difference equation
\begin{equation}
\mathcal{G}(z)-\big(1-(a+1)z\big)\mathcal{G}(p^{-1}z)+ap^{-1}z^{2}\mathcal{G}(p^{-2}z)=0.\label{eq:qdiff_G_J}
\end{equation}
The Stieltjes transform of the (unique) measure of orthogonality $\mu$
for the sequence of orthogonal polynomials $\{F_{n}(a,p;x)\}$ is
given by the formula
\begin{equation}
\int_{\mathbb{R}}\frac{\mbox{d}\mu(x)}{1-zx}=\frac{\mathcal{G}(p^{-1}z)}{\mathcal{G}(z)}\,.\label{eq:Stieltjes_mu}
\end{equation}
\end{theorem}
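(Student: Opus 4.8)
The plan is to assemble the statement from the two results quoted immediately above it---Proposition~\ref{prop:lim_VanAssche} of Van~Assche and the Coulomb-polynomial machinery recalled in Remark~\ref{rem:CoulombOP_results} (from \cite{StampachStovicek_CoulombOP})---and to dispatch the remaining identities by direct $q$-series computations. First I would record that, because $p>1$, the recurrence coefficients from (\ref{eq:alpha_beta}) with $q$ replaced by $p$ satisfy $\beta_n=(a+1)p^{-n}\in\ell^1(\mathbb{Z}_+)$ and $\alpha_n=\sqrt{a}\,p^{-n-1/2}\in\ell^2(\mathbb{Z}_+)$. Hence Remark~\ref{rem:CoulombOP_results} applies directly to the monic polynomials defined by (\ref{eq:recur_def_F_n}): the limit $\lim_{n\to\infty}x^{-n}F_n(a,p;x)$ exists for $x\neq0$ and equals $\mathcal{G}(x^{-1})$ with $\mathcal{G}$ \emph{entire}. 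Proposition~\ref{prop:lim_VanAssche} then identifies this limit explicitly as $\sum_{k\ge0}\omega_k(a,p)(pz)^k/(p;p)_k$ with $z=x^{-1}$, which is the second equality in (\ref{eq:calG_phi11_series}); combined with the previous sentence this also shows that the power series has infinite radius of convergence.

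Next I would prove the first equality in (\ref{eq:calG_phi11_series}) by a straightforward manipulation, writing $\tilde q:=p^{-1}\in(0,1)$. Expanding ${}_1\phi_1(0;z;\tilde q,az)=\sum_{n\ge0}(-1)^n\tilde q^{\binom n2}(az)^n/\big((z;\tilde q)_n(\tilde q;\tilde q)_n\big)$, multiplying by $(z;\tilde q)_\infty$, and using $(z;\tilde q)_\infty=(z;\tilde q)_n\,(z\tilde q^{\,n};\tilde q)_\infty$ together with the $q$-binomial theorem applied to $(z\tilde q^{\,n};\tilde q)_\infty$, one obtains a double sum. Collecting the terms with $n+m=k$ and using $\binom n2+\binom m2+nm=\binom k2$ produces the inner sum $\sum_{n=0}^k\binom kn_{\tilde q}a^n$, which equals $\omega_k(a,p)$ after the rescaling $\binom kn_{p^{-1}}=p^{-n(k-n)}\binom kn_p$ is compared with (\ref{eq:def_omega_n}). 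Simplifying the surviving $q$-Pochhammer prefactor via $(\tilde q;\tilde q)_k=(-1)^k p^{-k(k+1)/2}(p;p)_k$ and $\tilde q^{\binom k2}=p^{-k(k-1)/2}$ leaves exactly $\sum_k\omega_k(a,p)(pz)^k/(p;p)_k$, so $\mathcal{G}(z)=(z;p^{-1})_\infty\,{}_1\phi_1(0;z;p^{-1},az)$.

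For the $q$-difference equation (\ref{eq:qdiff_G_J}), I would first note that from (\ref{eq:omega_Hn}) and the standard three-term recurrence for the continuous $q$-Hermite polynomials with base $p^{-1}$ (or, equivalently, directly from (\ref{eq:def_omega_n}) via the $q$-Pascal rule) one gets $\omega_{n+1}(a,p)=(a+1)\omega_n(a,p)-a(1-p^{-n})\omega_{n-1}(a,p)$ with $\omega_0=1$, $\omega_1=a+1$. Substituting $\mathcal{G}(z)=\sum_k c_k z^k$ with $c_k=\omega_k(a,p)p^k/(p;p)_k$ into the left-hand side of (\ref{eq:qdiff_G_J}) and using $(p;p)_k=(1-p^k)(p;p)_{k-1}$, the coefficient of $z^k$ collapses to precisely this $\omega$-recurrence for $k\ge2$, and vanishes for $k=0,1$ by the initial values; this proves (\ref{eq:qdiff_G_J}). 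Finally, for the Stieltjes transform, Remark~\ref{rem:CoulombOP_results} gives $\int_\mathbb{R}\mathrm{d}\mu(x)/(1-zx)=\tilde{\mathcal{G}}(z)/\mathcal{G}(z)$, where $\tilde{\mathcal{G}}$ is the analogue of $\mathcal{G}$ for the once-shifted coefficients $\tilde\alpha_n=\alpha_{n+1}$, $\tilde\beta_n=\beta_{n+1}$. A one-line check from (\ref{eq:recur_def_F_n}) shows the corresponding monic polynomials are $\tilde F_n(x)=p^{-n}F_n(a,p;px)$, whence $\tilde{\mathcal{G}}(x^{-1})=\lim_n x^{-n}\tilde F_n(x)=\lim_n (px)^{-n}F_n(a,p;px)=\mathcal{G}\big((px)^{-1}\big)$, i.e. $\tilde{\mathcal{G}}(z)=\mathcal{G}(p^{-1}z)$, which is (\ref{eq:Stieltjes_mu}).

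The steps invoking the quoted results (the limit, entireness, and the Stieltjes formula) are essentially immediate, and the coefficient-by-coefficient verification of (\ref{eq:qdiff_G_J}) is short. The only genuinely fiddly part is the product-form computation: one must keep careful track of the exponents of $\tilde q=p^{-1}$ and of the two conversions between $p$- and $p^{-1}$-Pochhammer symbols and $q$-binomial coefficients. If one prefers to bypass that bookkeeping, an alternative is to verify directly that $(z;p^{-1})_\infty\,{}_1\phi_1(0;z;p^{-1},az)$ solves (\ref{eq:qdiff_G_J}) with the same value and first Taylor coefficient at $z=0$ as $\mathcal{G}$, and appeal to uniqueness of the power-series solution of (\ref{eq:qdiff_G_J}); however, establishing that $q$-difference equation for the product requires contiguous relations for ${}_1\phi_1$ and does not seem shorter.
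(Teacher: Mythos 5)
Your proposal is correct and follows essentially the same route as the paper's proof: Van Assche's Proposition~\ref{prop:lim_VanAssche} together with Remark~\ref{rem:CoulombOP_results} for the limit and the Stieltjes formula (via the same observation $\tilde F_n(x)=p^{-n}F_n(a,p;px)$, giving $\tilde{\mathcal G}(z)=\mathcal G(p^{-1}z)$), the Euler identity $(z;p^{-1})_\infty=\sum_n (pz)^n/(p;p)_n$ for the product--series identity, and the coefficientwise reduction of (\ref{eq:qdiff_G_J}) to the three-term recurrence for $\omega_k$ coming from the continuous $q$-Hermite polynomials. The only difference is that you write out the Cauchy-product bookkeeping explicitly, which the paper leaves as an immediate consequence of the definitions; your exponent computations check out.
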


\begin{proof} In view of Proposition~\ref{prop:lim_VanAssche},
in order to show (\ref{eq:calG_phi11_series}) it suffices to verify
only the second equality. But this equality follows from the definition
of the basic hypergeometric series and from the well known identity
\cite[Eq. (II.2)]{GasperRahman}
\[
(z;p^{-1})_{\infty}=\sum_{n=0}^{\infty}\frac{(pz)^{n}}{(p;p)_{n}}\,.
\]

Using the power series expansion of $\mathcal{G}(z)$ established
in (\ref{eq:calG_phi11_series}) one finds that (\ref{eq:qdiff_G_J})
is equivalent to
\[
\omega_{k}-(a+1)\omega_{k-1}+a\,(1-p^{-k+1})\omega_{k-2}=0\ \ \text{for}\ \ k\geq2
\]
and $\omega_{1}-(a+1)\omega_{0}=0$. This is true, indeed, if we take
into account (\ref{eq:omega_Hn}) and the recurrence relation for
the continuous $q$-Hermite polynomials \cite[Eq. (14.26.3)]{KoekoekLeskySwarttouw}
\[
2xH_{k}(x;q)=H_{k+1}(x;q)+(1-q^{k})H_{k-1}(x;q).
\]

Recalling once more (\ref{eq:recur_def_F_n}), the polynomials $F_{n}(a,p;x)$
solve the recurrence relation
\begin{equation}
  u_{n+1}=\big(x-(a+1)p^{-n}\big)u_{n}-ap^{-2n+1}u_{n-1}
  \label{eq:recurr_F}
\end{equation}
while the polynomials $\tilde{F}_{n}(a,p;x):=p^{-n}F_{n}(a,p;px)$ obviously
obey the recurrence
\begin{equation}
  \tilde{u}_{n+1}
  =\big(x-(a+1)p^{-n-1}\big)\tilde{u}_{n}-ap^{-2n-1}\tilde{u}_{n-1}.
  \label{eq:recurr_Ftilde}
\end{equation}
Comparing these two equations one observes that (\ref{eq:recurr_Ftilde}) is
obtained from (\ref{eq:recurr_F}) just by shifting the index.  In other words,
the sequences of monic polynomials $\{\tilde{F}_{n}(a,p;x)\}$ and
$\{F_{n}(a,p;x)\}$ are generated by the same recurrence relation, but the
index has to be shifted in the latter case. Hence, referring to
Remark~\ref{rem:CoulombOP_results} and equation (\ref{eq:lim_Fn_calG}), one
can compute
\[
\tilde{\mathcal{G}}(x^{-1})
=\lim_{n\to\infty}\, x^{-n}\tilde{F}_{n}(a,p;x)
=\lim_{n\to\infty}\,(px)^{-n}F_{n}(a,p;px)=\mathcal{G}(p^{-1}x^{-1}).
\]
Thus $\tilde{\mathcal{G}}(z)=\mathcal{G}(p^{-1}z)$ and (\ref{eq:Stieltjes_mu})
is a particular case of (\ref{eq:Stieltjes_mu_general}). \end{proof}

\subsection{Recurrence relations and asymptotic behavior}

From (\ref{eq:momentseq_J}) it is seen that $m_{n}(a,p)\leq\|J(a,p)\|^{n}$.
Moreover, from (\ref{eq:Stieltjes_mu}) one deduces that
\begin{equation}
\sum_{n=0}^{\infty}m_{n}(a,p)z^{n}=\frac{\mathcal{G}(p^{-1}z)}{\mathcal{G}(z)}\,,\label{eq:moment_powerser_eq_ratioG}
\end{equation}
and the series is clearly convergent if $p>1$ and $|z|<\|J(a,p)\|^{-1}$.

\begin{remark} Any explicit formula for monic polynomials $F_{n}(x)$,
$n\in\mathbb{Z}_{+}$, which are members of a sequence of orthogonal
polynomials with a measure of orthogonality $\mu$, automatically
implies a linear recursion for the corresponding moments. In fact,
$F_{0}(x)=1$ and so, by orthogonality, $\int_{\mathbb{R}}F_{n}(x)\mbox{d}\mu(x)=0$
for $n\geq1$. Particularly, in our case, formula (\ref{eq:F_n_explicit})
implies the relation
\[
\sum_{j=0}^{n}\frac{(-1)^{j}q^{(j-1)j/2}}{(q;q)_{j}{}^{2}}\left(\sum_{k=0}^{n-j}(q^{k+1};q)_{j}(q^{n-j-k+1};q)_{j}\, a^{k}\right)\! m_{j}(a,q)=0\ \ \text{for}\ \ n\geq1.
\]
\end{remark}

Further we derive two more recursions for the moments, a linear and
a quadratic one.

\begin{proposition} The moment sequence $\{m_{n}(a,q)\}$ solves
the equations $m_{0}(a,q)=1$ and
\begin{equation}
m_{n}(a,q)=\frac{\omega_{n}(a,q)}{(q;q)_{n-1}}-\sum_{k=1}^{n-1}\frac{q^{k}\omega_{k}(a,q)}{(q;q)_{k}}\, m_{n-k}(a,q),\ \ n\in\mathbb{N}.\label{eq:lin_recur_moments}
\end{equation}
\end{proposition}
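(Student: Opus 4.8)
The plan is to extract the recursion directly from the generating-function identity \eqref{eq:moment_powerser_eq_ratioG}. Recall that for $p>1$ we have
\[
\sum_{n=0}^{\infty}m_{n}(a,p)\,z^{n}=\frac{\mathcal{G}(p^{-1}z)}{\mathcal{G}(z)}\,,
\]
which we rewrite as $\mathcal{G}(z)\sum_{n\geq0}m_{n}(a,p)z^{n}=\mathcal{G}(p^{-1}z)$. Using the power-series form $\mathcal{G}(z)=\sum_{k\geq0}\frac{\omega_{k}(a,p)}{(p;p)_{k}}(pz)^{k}$ from \eqref{eq:calG_phi11_series}, the left-hand side is a Cauchy product, and $\mathcal{G}(p^{-1}z)=\sum_{k\geq0}\frac{\omega_{k}(a,p)}{(p;p)_{k}}z^{k}$. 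Comparing coefficients of $z^{n}$ gives, for every $n\in\mathbb{Z}_{+}$,
\[
\sum_{k=0}^{n}\frac{p^{k}\omega_{k}(a,p)}{(p;p)_{k}}\,m_{n-k}(a,p)=\frac{\omega_{n}(a,p)}{(p;p)_{n}}\,.
\]
The $n=0$ case gives $m_{0}(a,p)=1$ since $\omega_{0}=1$. For $n\geq1$, split off the $k=0$ term (which is just $m_{n}(a,p)$) and the $k=n$ term on the left, which is $\frac{p^{n}\omega_{n}(a,p)}{(p;p)_{n}}$; moving the latter to the right combines with $\frac{\omega_{n}(a,p)}{(p;p)_{n}}$ to produce $\frac{(1-p^{n})\omega_{n}(a,p)}{(p;p)_{n}}=\frac{\omega_{n}(a,p)}{(p;p)_{n-1}}$, using $(p;p)_{n}=(1-p^{n})(p;p)_{n-1}$. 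This yields exactly
\[
m_{n}(a,p)=\frac{\omega_{n}(a,p)}{(p;p)_{n-1}}-\sum_{k=1}^{n-1}\frac{p^{k}\omega_{k}(a,p)}{(p;p)_{k}}\,m_{n-k}(a,p),
\]
which is \eqref{eq:lin_recur_moments} with $p$ in place of $q$.

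Finally, to pass from $p>1$ back to the original range $q<a<q^{-1}$, $0<q<1$, one invokes the polynomiality observation made earlier in the section: by \eqref{eq:momentseq_J}, $m_{n}(a,q)=\langle e_{0},J(a,q)^{n}e_{0}\rangle$ is a polynomial in $a$ and $q^{-1}$, and likewise $\omega_{n}(a,q)$ and $1/(q;q)_{n-1}$ are rational functions of $q$ regular on $0<q<1$ (indeed $\omega_{n}$ is a Laurent polynomial in the two variables). Thus \eqref{eq:lin_recur_moments} is an identity between rational functions that holds on the open set $p>1$, hence holds identically, in particular for $0<q<1$.

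I do not anticipate a genuine obstacle here — the argument is a routine coefficient comparison followed by an analytic-continuation remark. The only point requiring a little care is the bookkeeping when isolating the $k=0$ and $k=n$ terms and verifying that $\frac{\omega_{n}}{(p;p)_{n}}-\frac{p^{n}\omega_{n}}{(p;p)_{n}}=\frac{\omega_{n}}{(p;p)_{n-1}}$; and one should note that for $p>1$ the series manipulations are legitimate because, as remarked after \eqref{eq:moment_powerser_eq_ratioG}, the moment generating series converges on a neighborhood of $0$ and $\mathcal{G}$ is entire with $\mathcal{G}(0)=1\neq0$, so the quotient is analytic there and the Cauchy product identity is valid term by term.
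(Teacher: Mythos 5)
Your proposal is correct and follows essentially the same route as the paper: multiply the moment generating series by $\mathcal{G}(z)$ using (\ref{eq:moment_powerser_eq_ratioG}) and (\ref{eq:calG_phi11_series}), equate coefficients of $z^{n}$ for $p>1$ (absorbing the $k=n$ term via $(p;p)_{n}=(1-p^{n})(p;p)_{n-1}$), and then extend to $0<q<1$ by rationality in $q$. The only difference is that you spell out the coefficient bookkeeping that the paper leaves implicit.
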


\begin{proof} Equations (\ref{eq:moment_powerser_eq_ratioG}) and
(\ref{eq:calG_phi11_series}) imply that
\[
\sum_{m=0}^{\infty}\frac{p^{m}\omega_{m}(a,p)}{(p;p)_{m}}\, z^{m}\,\sum_{n=0}^{\infty}m_{n}(a,p)z^{n}=\sum_{m=0}^{\infty}\frac{\omega_{m}(a,p)}{(p;p)_{m}}\, z^{m}
\]
holds for $p>1$ and $z$ from a neighborhood of $0$. Equating the
coefficients of equal powers of $z$ one finds that (\ref{eq:lin_recur_moments})
holds true for $q=p>1$. But for the both sides are rational functions
in $q$ the equation remains valid also for $0<q<1$. \end{proof}

\begin{proposition} The moment sequence $\{m_{n}(a,q)\}$ solves
the equations $m_{0}(a;q)=1$ and
\begin{equation}
m_{n+1}(a,q)=(a+1)\, m_{n}(a,q)+a\sum_{k=0}^{n-1}q^{-k-1}m_{k}(a,q)\, m_{n-k-1}(a,q),\ \ n\in\mathbb{Z}_{+}.\label{eq:quadr_recur_moments}
\end{equation}
\end{proposition}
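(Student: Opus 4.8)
The plan is to derive the quadratic recurrence from the second-order $q$-difference equation (\ref{eq:qdiff_G_J}) satisfied by $\mathcal{G}$, combined with the generating-function identity (\ref{eq:moment_powerser_eq_ratioG}). First I would set $p > 1$ throughout the derivation, so that all the series converge in a neighborhood of $z = 0$ and all manipulations are legitimate; the passage to $0 < q < 1$ will be handled at the end by the usual rationality argument, since both sides of (\ref{eq:quadr_recur_moments}) are rational (in fact polynomial) in $q^{-1}$ and $a$ by (\ref{eq:momentseq_J}). Write $M(z) = \sum_{n=0}^{\infty} m_n(a,p) z^n = \mathcal{G}(p^{-1}z)/\mathcal{G}(z)$, so that $\mathcal{G}(p^{-1}z) = M(z)\,\mathcal{G}(z)$, and likewise $\mathcal{G}(p^{-2}z) = M(p^{-1}z)\,\mathcal{G}(p^{-1}z) = M(p^{-1}z)\,M(z)\,\mathcal{G}(z)$.

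Next I would substitute these two relations into the $q$-difference equation (\ref{eq:qdiff_G_J}), namely $\mathcal{G}(z) - (1-(a+1)z)\mathcal{G}(p^{-1}z) + ap^{-1}z^{2}\mathcal{G}(p^{-2}z) = 0$, and divide through by $\mathcal{G}(z)$ (nonzero near the origin, as $\mathcal{G}(0) = 1$). This yields the functional equation
\[
1 - \big(1-(a+1)z\big)M(z) + ap^{-1}z^{2}M(p^{-1}z)M(z) = 0.
\]
Equivalently, $M(z) - 1 = (a+1)z\,M(z) + ap^{-1}z^{2}M(p^{-1}z)M(z)$. The product $M(p^{-1}z)M(z)$ has $z^{m}$-coefficient $\sum_{k=0}^{m} p^{-k} m_k(a,p) m_{m-k}(a,p)$, so comparing the coefficient of $z^{n+1}$ on both sides (for $n \in \mathbb{Z}_+$) gives
\[
m_{n+1}(a,p) = (a+1)\,m_n(a,p) + ap^{-1}\sum_{k=0}^{n-1} p^{-k} m_k(a,p)\, m_{n-1-k}(a,p),
\]
which is exactly (\ref{eq:quadr_recur_moments}) after writing $ap^{-1}p^{-k} = aq^{-k-1}$ and relabeling $p \to q$; the coefficient of $z^0$ recovers $m_0(a,q) = 1$.

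I do not anticipate a serious obstacle here: the only points that need a word of care are (i) justifying convergence and term-by-term multiplication of the power series, which is immediate for $p > 1$ and $|z|$ small since $\mathcal{G}$ is entire with $\mathcal{G}(0)=1$ and $M$ is analytic near $0$ by (\ref{eq:moment_powerser_eq_ratioG}); (ii) the bookkeeping in the convolution, where the $z^2$ prefactor shifts the double sum so that the $z^{n+1}$ coefficient involves indices $k$ and $n-1-k$ with $0 \le k \le n-1$; and (iii) the final extension from $p>1$ to $0<q<1$ by rationality in $q$, invoking (\ref{eq:momentseq_J}) exactly as in the proof of (\ref{eq:lin_recur_moments}). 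One could alternatively prove (\ref{eq:quadr_recur_moments}) combinatorially from $m_n(a,p) = \langle e_0, J(a,p)^n e_0\rangle$ by a walk-decomposition argument (first-return decomposition of paths from $0$), but the generating-function route above is shorter and reuses the machinery already assembled.
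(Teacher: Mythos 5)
Your proposal is correct and follows essentially the same route as the paper: substitute the power series $\sum_n m_n(a,p)z^n=\mathcal{G}(p^{-1}z)/\mathcal{G}(z)$ into the $q$-difference equation (\ref{eq:qdiff_G_J}), compare coefficients of $z^{n+1}$, and extend from $p>1$ to $0<q<1$ by polynomiality in $q^{-1}$. The only difference is cosmetic (you work with $M(z)$ and $M(p^{-1}z)$ rather than the ratio $\mathcal{G}(p^{-2}z)/\mathcal{G}(p^{-1}z)$ directly), and your convolution bookkeeping and base case check out.
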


\begin{proof} Equation (\ref{eq:qdiff_G_J}) can be rewritten as
\[
\frac{\mathcal{G}(p^{-1}z)}{\mathcal{G}(z)}\!\left(1-(a+1)z-ap^{-1}z^{2}\,\frac{\mathcal{G}(p^{-2}z)}{\mathcal{G}(p^{-1}z)}\right)=1
\]
and holds true for $p>1$ and $z$ from a neighborhood of the origin.
Substituting the power series expansion (\ref{eq:moment_powerser_eq_ratioG})
one has
\[
\left(1-(a+1)z-ap^{-1}z^{2}\sum_{n=0}^{\infty}m_{n}(a,p)\, p^{-n}z^{n}\right)\!\sum_{n=0}^{\infty}m_{n}(a,p)z^{n}=1.
\]
Equating the coefficients of equal powers of $z$ one concludes that
(\ref{eq:quadr_recur_moments}) holds for $q=p>1$. For the both sides
are polynomials in $q^{-1}$ the equation is valid for $0<q<1$ as
well. \end{proof}

Our final task is to provide estimates bringing some insight into
the asymptotic behavior of the moments for large powers. We still
assume that $0<q<1$ and $a>0$. On the other hand, $a$ is not required
to be restricted to the interval $q<a<q^{-1}$. Let us note that it
has been shown in \cite[Lemma~4.9.1]{BergValent} that
\begin{equation}
a^{n/2}q^{-n(n-1)/4}\leq\omega_{n}(a,q)\leq(1+a)^{n}q^{-n^{2}/4},\ \ n\in\mathbb{Z}_{+}.\label{eq:estim_omega}
\end{equation}

\begin{proposition} Let $a>0$. The moments $m_{n}(a,q)$ obey the
inequalities
\begin{equation}
m_{n}(a,q)\leq\frac{(1+a)^{n}}{(q;q)_{n-1}}\, q^{-n^{2}/4},\ n\in\mathbb{Z}_{+},\label{eq:upper_estim_moment}
\end{equation}
and
\begin{equation}
m_{2n}(a,q)\geq a^{n}q^{-n^{2}},\ \ m_{2n+1}(a,q)\geq(a+1)a^{n}q^{-n(n+1)},\ n\in\mathbb{Z}_{+}.\label{eq:lower_estim_moment}
\end{equation}
\end{proposition}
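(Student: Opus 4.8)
The plan is to handle the two estimates separately, using the two recurrences for the moment sequence $\{m_n(a,q)\}$ established above together with the (elementary) positivity of the moments. First I would record that $m_n(a,q)>0$ for all $n\in\mathbb{Z}_{+}$ and all $a>0$: since $m_0(a,q)=1$ and $a>0$, an induction on $n$ based on the quadratic recursion (\ref{eq:quadr_recur_moments}) shows its right-hand side to be a sum of nonnegative terms, so $m_{n+1}(a,q)\ge(a+1)m_n(a,q)>0$. (Alternatively one may invoke that we are in the Stieltjes case, so there is a measure of orthogonality supported on $[0,\infty)$.)

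For the upper bound (\ref{eq:upper_estim_moment}) I would start from the linear recursion (\ref{eq:lin_recur_moments}). Every summand of the subtracted sum $\sum_{k=1}^{n-1}q^k\omega_k(a,q)\,m_{n-k}(a,q)/(q;q)_k$ is positive, because $\omega_k(a,q)>0$ (it is a sum of positive terms), $0<q<1$, and $m_{n-k}(a,q)>0$; discarding it gives $m_n(a,q)\le\omega_n(a,q)/(q;q)_{n-1}$ for $n\ge1$. It then suffices to insert the known estimate $\omega_n(a,q)\le(1+a)^n q^{-n^2/4}$ from (\ref{eq:estim_omega}).

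For the lower bounds (\ref{eq:lower_estim_moment}) I would run two independent inductions, one over the even-indexed and one over the odd-indexed moments, again using the quadratic recursion (\ref{eq:quadr_recur_moments}) but now retaining only a single, carefully chosen summand and discarding the rest (legitimate by positivity). The base cases are $m_0(a,q)=1$ and $m_1(a,q)=a+1$, which match (\ref{eq:lower_estim_moment}) with equality. In the even case, taking $n=2m-1$ in (\ref{eq:quadr_recur_moments}) and keeping only the last term of the sum --- the one pairing $m_{2m-2}(a,q)$ with $m_0(a,q)=1$ --- yields $m_{2m}(a,q)\ge a q^{-2m+1}m_{2m-2}(a,q)$; the induction hypothesis $m_{2m-2}(a,q)\ge a^{m-1}q^{-(m-1)^2}$ together with the identity $-(2m-1)-(m-1)^2=-m^2$ then gives $m_{2m}(a,q)\ge a^m q^{-m^2}$. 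Similarly, with $n=2m$ the terminal summand pairs $m_{2m-1}(a,q)$ with $m_0(a,q)=1$, so $m_{2m+1}(a,q)\ge a q^{-2m}m_{2m-1}(a,q)\ge a q^{-2m}(a+1)a^{m-1}q^{-(m-1)m}=(a+1)a^m q^{-m(m+1)}$. The only step that is not pure bookkeeping --- and hence what I would flag as the crux --- is recognizing that it is exactly the terminal summand $a q^{-n}m_{n-1}(a,q)m_0(a,q)$ of the quadratic recursion that, after one use of the induction hypothesis, reproduces the asserted bound precisely; everything else is positivity and elementary exponent arithmetic.
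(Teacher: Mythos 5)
Your proposal is correct and follows essentially the same route as the paper: positivity of the moments (clear from the quadratic recursion), dropping the positive sum in (\ref{eq:lin_recur_moments}) to get $m_{n}\leq\omega_{n}/(q;q)_{n-1}$ and then invoking (\ref{eq:estim_omega}), and for the lower bounds retaining only the terminal summand of (\ref{eq:quadr_recur_moments}) to obtain $m_{2n}\geq aq^{-2n+1}m_{2n-2}$ and $m_{2n+1}\geq aq^{-2n}m_{2n-1}$, followed by induction. Your write-up just makes explicit the exponent bookkeeping that the paper leaves to the reader.
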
 

\begin{proof} It is clear, for instance from (\ref{eq:quadr_recur_moments}),
that each moment $m_{n}(a,q)$ is a polynomial in $a$ and $q^{-1}$
with nonnegative integer coefficients. Furthermore, by the very definition
(\ref{eq:omega_Hn}), $\omega_{n}(a,q)$ is a polynomial in $a$ of
degree $n$ with positive coefficients. From (\ref{eq:lin_recur_moments})
it is seen that
\[
m_{n}(a;q)\leq\frac{\omega_{n}(a,q)}{(q;q)_{n-1}}\,,
\]
and then (\ref{eq:estim_omega}) implies (\ref{eq:upper_estim_moment}).

From (\ref{eq:quadr_recur_moments}) one infers that
\[
m_{2n+1}(a,q)\geq aq^{-2n}m_{2n-1}(a,q),\ m_{2n}(a,q)\geq aq^{-2n+1}m_{2n-2}(a,q),\ \text{for}\ n\geq1.
\]
Using these inequalities and proceeding by mathematical induction
one can verify (\ref{eq:lower_estim_moment}). \end{proof}

%%%%%%%%%%%%%%%%%%%%%%%%%%%%%%%%%%%%%%%%%%%%%%%%%%%%%%%%%%%%%%%%%%%%%%%%
\setcounter{section}{1}
\renewcommand{\thesection}{\Alph{section}}
\setcounter{equation}{0} \renewcommand{\theequation}{\Alph{section}.\arabic{equation}}
\setcounter{theorem}{0} \renewcommand{\thetheorem}{\Alph{section}.\arabic{theorem}}

\section*{Appendix. An asymptotic expansion for the basic confluent
  hypergeometric function and its roots}

The purpose of the appendix is to summarize briefly several useful facts about
the asymptotic behavior of the basic confluent hypergeometric function which
are being referred to in the text of the paper, and to complete them with a
few additional observations while making some details more precise. A common
interpretation of the studied basic hypergeometric function is within the
theory of $q$-Bessel functions but here we prefer, as mentioned already in the
beginning of the paper, to work directly with the function
$\,_{1}\phi_{1}(0;w;q,z)$. Throughout the appendix we assume that $0\leq w<1$
and our focus is on the asymptotic domain $z\to+\infty$.

In \cite{Daalhuis}, Daalhuis derived a remarkable complete asymptotic
expansion of the $q$-Pochhammer symbol. Let us denote
\begin{equation}
  \tilde{q}=e^{4\pi^{2}/\ln(q)},\ \beta(z)=\frac{\pi\ln(z)}{\ln(q)},
  \label{eq:qtilde_beta}
\end{equation}
and
\begin{equation}
  A(z)=2q^{-1/12}\sqrt{z}\,\exp\!\left(-\frac{\ln^{2}(z)}{2\ln(q)}
    +\frac{\pi^{2}}{3\ln(q)}\right)
  \left|\left(\tilde{q}\, e^{-2i\beta(z)};\tilde{q}\right)_{\infty}\right|^{2}.
  \label{eq:Az}
\end{equation}
Then
\begin{equation}
  (z;q)_{\infty}=\frac{A(z)}{(q/z;q)_{\infty}}\,\sin(\beta(z))
  \label{eq:daalhuis}
\end{equation}
for $z>0$. To facilitate comparison of (\ref{eq:daalhuis}) with the original
formula in \cite{Daalhuis} let us note that, for $h>0$ and $\beta$ real,
\begin{eqnarray*}
  \exp\!\left(-\sum_{k=1}^{\infty}
    \frac{\exp(-hk)}{k\sinh(hk)}\,\cos(\beta k)\right)
  & = & \left|\exp\!\left(-\sum_{j=1}^{\infty}\,\sum_{k=1}^{\infty}
      \frac{1}{k}\, e^{(-2hj+i\beta)k}\right)\right|^{2}\\
  \noalign{\smallskip}
  & = & \left|\left(e^{-2h+i\beta};e^{-2h}\right)_{\infty}\right|^{2}.
\end{eqnarray*}

It has also been emphasized in \cite{Daalhuis} that formula
(\ref{eq:daalhuis}) has some useful implications for the theta function. Let
us point out that this relationship between the $q$-Pochhammer symbol and the
theta function works as well in the opposite direction. As far as notations
and basic results related to the theta functions are concerned we refer to
\cite[Chp. XXI]{WhittakerWatson}. The theta function $\vartheta_{1}$ is known
to have the expansion, for $\beta$ real,
\begin{equation}
  \vartheta_{1}(\beta,q)=2q^{1/4}(q^{2};q^{2})_{\infty}
  \sin(\beta)\,\left|\left(e^{2i\beta}q^{2};q^{2}\right)_{\infty}\right|^{2}
  =\sum_{k=0}^{\infty}(-1)^{k}q^{(2k+1)^{2}/4}\sin(\beta(2k+1)).
  \label{eq:theta1}
\end{equation}
The so called Jacobi imaginary transformation of $\vartheta_{1}$ can be
written in the form
\begin{equation}
  \vartheta_{1}(\beta,e^{-\pi h})
  =\frac{1}{i\sqrt{h}}\,\exp\!\left(-\frac{\beta^{2}}{\pi h}\right)
  \vartheta_{1}\!\left(\frac{i\beta}{h},e^{-\pi/h}\right)\!,
  \ \Re h>0.
  \label{eq:Jacobi_imtran}
\end{equation}
Let us note that this identity can be derived, for instance, by applying
Poisson's summation rule to the series expansion in (\ref{eq:theta1}).
Expressing the theta functions occurring in (\ref{eq:Jacobi_imtran}) as
infinite products one obtains
\begin{eqnarray}
  &  & e^{-\pi h/4}\sin(\beta)\,\left(e^{-2\pi h};e^{-2\pi h}\right)_{\infty}
  \left(e^{-2\pi h-2i\beta};e^{-2\pi h}\right)_{\infty}
  \left(e^{-2\pi h+2i\beta};e^{-2\pi h}\right)_{\infty}\nonumber \\
  &  & =\,\frac{1}{\sqrt{h}}\,\exp\!\left(-\frac{\beta^{2}}{\pi h}\right)
  e^{-\pi/(4h)}\sinh\!\left(\frac{\beta}{h}\right)
  \left(e^{-2\pi/h};e^{-2\pi/h}\right)_{\infty}
  \label{eq:jacobi_to_daalhuis}\\
  \noalign{\smallskip} &  & 
  \quad\times\left(e^{-(2\pi+2\beta)/h};e^{-2\pi/h}\right)_{\infty}
  \left(e^{-(2\pi-2\beta)/h};e^{-2\pi/h}\right)_{\infty}.\nonumber 
\end{eqnarray}
Differentiating (\ref{eq:jacobi_to_daalhuis}) with respect to $\beta$ at
$\beta=0$ one derives a rather neat identity for the $q$-Pochhammer symbol,
\begin{equation}
  \left(e^{-2\pi h};e^{-2\pi h}\right){}_{\!\infty}
  =\frac{1}{\sqrt{h}}\,\exp\!\left(\frac{\pi}{12}
    \left(h-\frac{1}{h}\right)\right)
  \left(e^{-2\pi/h};e^{-2\pi/h}\right){}_{\!\infty},\text{ }\Re h>0.
  \label{eq:ident_qPochhammer}
\end{equation}
Furthermore, letting $h=-2\pi/\ln(q)$ and $\beta=\pi\ln(z)/\ln(q)$ in
(\ref{eq:jacobi_to_daalhuis}), and making use of (\ref{eq:ident_qPochhammer})
one arrives at (\ref{eq:daalhuis}).

One may also note that, with increasing $z$, the term
$\left|\left(\tilde{q}\, e^{-2i\beta(z)};\tilde{q}\right)_{\infty}\right|$
occurring in (\ref{eq:Az}) oscillates (logarithmically) between the extreme
values
\[
\left(\tilde{q};\tilde{q}\right)_{\infty}
=q^{1/24}\,\sqrt{-\frac{\ln(q)}{2\pi}}\,
\exp\!\left(-\frac{\pi^{2}}{6\ln(q)}\right)(q;q)_{\infty}
\]
and
\[
\left(-\tilde{q};\tilde{q}\right)_{\infty}
=\frac{q^{-1/48}}{\sqrt{2}}\,\exp\!\left(-\frac{\pi^{2}}{6\ln(q)}\right)
\left(q^{1/2};q\right)_{\infty}.
\]

By differentiating (\ref{eq:daalhuis}) one obtains the asymptotic formula
\begin{eqnarray}
  \frac{\partial(z;q)_{\infty}}{\partial z}
  & = & \frac{A(z)}{(q/z;q)_{\infty}\, z}
  \Bigg(\!\!\left(-\frac{\beta(z)}{\pi}+\frac{1}{2}
    +O\!\left(\frac{1}{z}\right)\!\right)\sin(\beta(z))
  +\frac{\pi}{\ln(q)}\,\cos(\beta(z))\nonumber \\
  \noalign{\smallskip} &  & \qquad\qquad\quad+\,\frac{8\pi}{\ln(q)}
  \sum_{k=1}^{\infty}\frac{\tilde{q}^{k}}{|1-\tilde{q}^{k}e^{-2i\beta(z)}|^{2}}\,
  \sin^{2}(\beta(z))\cos(\beta(z))\Bigg)
  \label{eq:der_qPochhammer}
\end{eqnarray}
as $z\to+\infty$. Note that the $O(z^{-1})$ term is in fact
\[
z\,(q/z;q)_{\infty}\,\frac{\partial}{\partial z}\frac{1}{(q/z;q)_{\infty}}
=-z\,\frac{\partial}{\partial z}\ln((q/z;q)_{\infty})
=-\frac{1}{z}\sum_{j=1}^{\infty}\frac{q^{j}}{1-q^{j}z^{-1}}.
\]

To deduce from (\ref{eq:daalhuis}) and (\ref{eq:der_qPochhammer}) some
information about the asymptotic behavior of the function
$\,_{1}\phi_{1}(0;w;q,z)$ for $z$ large one needs the following fundamental
relation which has been derived in \cite[Prop. 2.1]{KoornwinderSwarttouw},
\begin{equation}
  \,_{1}\phi_{1}(0;w;q,z)
  =\frac{(z;q)_{\infty}}{(w;q)_{\infty}}\,\,_{1}\phi_{1}(0;z;q,w).
  \label{eq:phi_phi}
\end{equation}

\begin{theorem}
\label{thm:asympt_q-confhyp}
Let ($[x]$ standing for the integer part of $x\in\mathbb{R}$)
\begin{equation}
  K(z)=\left[\frac{1}{2}-\frac{\ln(z)}{\ln(q)}\right]\!.
  \label{eq:Kz}
\end{equation}
With the notation introduced in (\ref{eq:qtilde_beta}), (\ref{eq:Az}), and
assuming $0\leq w<1$, there exist functions $B(w,z)$ and $C(w,z)$ such that
\begin{eqnarray*}
 &  & \hskip-1.5em\,_{1}\phi_{1}(0;w;q,z)\,=\,\frac{B(w,z)}{(w;q)_{\infty}}\\
 &  & \ \times\!\left(\! A(z)\sin(\beta(z))+(-1)^{K(z)+1}q^{(K(z)+1)K(z)/2}w^{K(z)+1}\frac{\left(q^{K(z)+1}z;q\right){}_{\infty}}{(q;q)_{\infty}}\, C(w,z)\!\right)
\end{eqnarray*}
and (for a fixed $w$)
\[
B(w,z)=1+O(z^{-1}),\ C(w,z)=1+O(z^{-1})\text{ }\text{as}\ z\to+\infty.
\]
\end{theorem}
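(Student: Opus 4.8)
The plan is to start from the fundamental relation (\ref{eq:phi_phi}),
\[
\,_{1}\phi_{1}(0;w;q,z)=\frac{(z;q)_{\infty}}{(w;q)_{\infty}}\,\,_{1}\phi_{1}(0;z;q,w),
\]
and to analyze the two factors on the right-hand side separately as $z\to+\infty$. The factor $(z;q)_{\infty}$ is handled directly by Daalhuis' formula (\ref{eq:daalhuis}), which already exhibits the $A(z)\sin(\beta(z))$ shape; the correction terms coming from $(q/z;q)_{\infty}^{-1}$ are $1+O(z^{-1})$ and can be absorbed into the prefactor $B(w,z)$. The genuinely new work is to understand $\,_{1}\phi_{1}(0;z;q,w)$ when $z$ is large and $w$ is a fixed number in $[0,1)$. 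Here $z$ sits in the ``denominator parameter'' slot, so the series $\sum_{n\ge0}\frac{q^{n(n-1)/2}(-w)^{n}}{(q;q)_{n}(z;q)_{n}}$ does not simply terminate, but its terms decay rapidly because of the $q^{n(n-1)/2}$ factor; the subtlety is that $(z;q)_{n}$ has a zero near $z=q^{-k}$ for each $k\le n-1$, so the dominant behavior of the series is governed, for $z$ in the window where $q^{-K}\lesssim z\lesssim q^{-K-1}$, by a single ``resonant'' term together with the tail. This is exactly the role of the cutoff $K(z)=[\tfrac12-\ln(z)/\ln(q)]$ in (\ref{eq:Kz}).

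Concretely, I would split $\,_{1}\phi_{1}(0;z;q,w)=S_{\le K}(z)+S_{>K}(z)$ at $n=K(z)$, or better, isolate the single term of index $K(z)+1$ whose denominator factor $(z;q)_{K(z)+1}$ is the one that becomes small (since $q^{K+1}z=O(1)$ while $q^{K}z$ is bounded away from $0$). For the terms with $n\le K(z)$ one rewrites $(z;q)_{n}=(-1)^{n}q^{n(n-1)/2}z^{n}(q^{1-n}/z;q)_{n}$ and checks that these contribute, after multiplication by $(z;q)_{\infty}$, a quantity of the form $A(z)\sin(\beta(z))\big(1+O(z^{-1})\big)$; indeed the leading $n=0$ term already gives $(z;q)_{\infty}$ itself. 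The single term $n=K(z)+1$, again using the same rewriting of $(z;q)_{K+1}$ and the product rule $(z;q)_{\infty}=(z;q)_{K+1}(q^{K+1}z;q)_{\infty}$, produces precisely the second summand
\[
(-1)^{K(z)+1}q^{(K(z)+1)K(z)/2}w^{K(z)+1}\,\frac{(q^{K(z)+1}z;q)_{\infty}}{(q;q)_{\infty}}
\]
up to a factor $C(w,z)=1+O(z^{-1})$ that collects the finite $q$-Pochhammer ratios $(q^{1-n}/z;q)_{n}$ evaluated near their limits and the effect of $(w;q)_{\infty}$-type constants. The remaining tail $S_{>K+1}$ has to be shown to be swallowed by the error: each further term carries an extra factor roughly $q^{K}w/(1-O(q^{K}z))$, and since $q^{K}\asymp z^{-1}$ one gets a geometric bound of size $O(z^{-1})$ relative to the retained term, so it too goes into $C(w,z)$.

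The main obstacle I anticipate is the bookkeeping near the resonance: controlling uniformly in $w\in[0,1)$ (and with explicit $O(z^{-1})$, not merely $o(1)$) the ratio $(q^{K+1}z;q)_{\infty}$ when $q^{K+1}z$ ranges over a bounded interval, while simultaneously making sure the split point $n=K(z)$ is chosen so that neither the retained term nor the discarded tail is mis-sized at the two ends of each window $q^{-K}\le z< q^{-K-1}$ (where $q^{K+1}z$ approaches $1$ or $q$). One must verify that the transition across integer values of $\tfrac12-\ln(z)/\ln(q)$ is consistent, i.e. that replacing $K$ by $K+1$ trades the ``resonant term'' label between consecutive indices without changing the stated asymptotic form — this amounts to a $q$-Pochhammer identity $(z;q)_{K+1}\cdot(\text{term})_{K+1}$ matching across the jump, and it is the point where the precise normalization of $A(z)$ and the factor $(-1)^{K+1}q^{(K+1)K/2}$ in (\ref{eq:Az}), (\ref{eq:Kz}) has to be dovetailed exactly. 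Once this uniformity is in hand, assembling the pieces and folding all $1+O(z^{-1})$ factors into $B(w,z)$ and $C(w,z)$ is routine.
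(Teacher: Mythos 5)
Your proposal follows essentially the same route as the paper's proof: starting from the transformation (\ref{eq:phi_phi}), splitting the series at the index $K(z)$, applying Daalhuis' formula (\ref{eq:daalhuis}) to the head (whose corrections, bounded via $|(z;q)_k|\geq q^{k(k-1)/2}(q^{1/2};q)_k z^k$, are absorbed into $B(w,z)$), and extracting the term of index $K(z)+1$ from the tail with the remaining terms, controlled because $q^{K(z)}z\in[q^{1/2},q^{-1/2})$ keeps $q^{K(z)+1}z$ bounded away from $1$, folded into $C(w,z)$. The uniformity issues you worry about at the window edges are precisely what the half-shift in the definition (\ref{eq:Kz}) of $K(z)$ prevents, and the theorem only claims the estimates for fixed $w$, so your plan matches the paper's argument in substance.
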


\begin{proof}
  In view of (\ref{eq:phi_phi}), we have
\begin{eqnarray}
  &  & \,_{1}\phi_{1}(0;w;q,z)\label{eq:phi_split}\\
  &  & =\,\frac{1}{(w;q)_{\infty}}\!\left((z;q)_{\infty}
    \sum_{k=0}^{K(z)}\frac{(-1)^{k}q^{k(k-1)/2}w^{k}}{(q;q)_{k}(z;q)_{k}}
    +\sum_{k=K(z)+1}^{\infty}\frac{(-1)^{k}q^{k(k-1)/2}
      w^{k}}{(q;q)_{k}}\,(q^{k}z;q)_{\infty}\right)\!.\nonumber 
\end{eqnarray}
Making use of (\ref{eq:daalhuis}) one finds that it suffices to put
\[
B(w,z)=\frac{1}{(q/z;q)_{\infty}}\left(1+\sum_{k=1}^{K(z)}
\frac{(-1)^{k}q^{k(k-1)/2}w^{k}}{(q;q)_{k}(z;q)_{k}}\right)\!,\ C(w,z)
=\frac{\tilde{C}(w,z)}{B(w,z)},
\]
where
\[
\tilde{C}(w,z)=\left(q^{K(z)+2};q\right)_{\infty}
\left(1+\sum_{j=1}^{\infty}\frac{(-1)^{j}
q^{(j-1)j/2}}{\left(q^{K(z)+2};q\right){}_{j}}
\frac{\left(q^{K(z)+1}w\right)^{j}}{\left(q^{K(z)+1}z;q\right){}_{j}}\right)\!.
\]
Note that $q^{1/2}\leq q^{K(z)}z<q^{-1/2}$ and so $q^{K(z)}=O(z^{-1})$.
Furthermore, for $0\leq k\leq K(z)$ one has
\[
\left|(z;q)_{k}\right|\geq q^{k(k-1)/2}(q^{1/2};q)_{\infty}\, z^{k}.
\]
In fact, this is obviously true for $k=0$. For $q^{-1/2}\leq z$ and $1\leq
k\leq K(z)$,
\begin{equation}
  \left|(z;q)_{k}\right|
  =q^{k(k-1)/2}z^{k}(1-z^{-1})(1-q^{-1}z^{-1})\ldots(1-q^{-k+1}z^{-1})
  \geq q^{k(k-1)/2}z^{k}(q^{1/2};q)_{k}.
  \label{eq:qPoch_qz_estim}
\end{equation}
Hence $B(w,z)=1+O(z^{-1})$ as $z\to+\infty$. The rest of the proof is quite
clear.
\end{proof}

\begin{theorem}
\label{thm:asymp_der_q-confhyp}
Under the same assumptions as in Theorem~\ref{thm:asympt_q-confhyp},
\begin{eqnarray*}
  \frac{\partial\,_{1}\phi_{1}(0;w;q,z)}{\partial z}
  & = & \frac{A(z)}{(w;q)_{\infty}\, z}
  \Bigg(\!\left(-\frac{\beta(z)}{\pi}+\frac{1}{2}\right)
  \sin(\beta(z))+\frac{\pi}{\ln(q)}\cos(\beta(z))\\
  \noalign{\smallskip} &  & \qquad\quad+\,\frac{8\pi}{\ln(q)}
  \sum_{k=1}^{\infty}\frac{\tilde{q}^{k}}{|1-\tilde{q}^{k}
    e^{-2i\beta(z)}|^{2}}\,\sin^{2}(\beta(z))\cos(\beta(z))\\
  &  & \qquad\quad+\, O\!\left(\frac{\ln(z)}{z}\right)\!\Bigg)
\end{eqnarray*}
as $z\to+\infty$.
\end{theorem}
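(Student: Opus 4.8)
The natural starting point is to differentiate the decomposition (\ref{eq:phi_split}) which already underlies the proof of Theorem~\ref{thm:asympt_q-confhyp}. Write it as $(w;q)_{\infty}\,_{1}\phi_{1}(0;w;q,z)=(z;q)_{\infty}S_{1}(z)+S_{2}(z)$, where
\[
  S_{1}(z)=\sum_{k=0}^{K(z)}\frac{(-1)^{k}q^{k(k-1)/2}w^{k}}{(q;q)_{k}(z;q)_{k}},
  \qquad
  S_{2}(z)=\sum_{k=K(z)+1}^{\infty}\frac{(-1)^{k}q^{k(k-1)/2}w^{k}}{(q;q)_{k}}\,(q^{k}z;q)_{\infty}.
\]
Since $K(z)$ is piecewise constant with isolated jumps at $z=q^{-m-1/2}$, I would first restrict to $z$ off this jump set and differentiate term by term within each sum, getting
\[
  (w;q)_{\infty}\frac{\partial\,_{1}\phi_{1}(0;w;q,z)}{\partial z}
  =\frac{\partial(z;q)_{\infty}}{\partial z}\,S_{1}(z)+(z;q)_{\infty}S_{1}'(z)+S_{2}'(z).
\]
Because the left-hand side is entire, and the asymptotic formula to be proved (together with its $O(\ln z/z)$ remainder) is a continuous function of $z$, the estimate will then extend from this dense set to all $z>0$.

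The second step is to estimate the three summands using only the elementary bounds already available in the appendix. From (\ref{eq:qPoch_qz_estim}) one has $|(z;q)_{k}|\ge q^{k(k-1)/2}(q^{1/2};q)_{\infty}z^{k}$ for $0\le k\le K(z)$, and feeding this into $S_{1}$ gives $S_{1}(z)=1+O(z^{-1})$ as $z\to+\infty$. For $S_{1}'$, I would use $\partial_{z}(z;q)_{k}=(z;q)_{k}\sum_{j=0}^{k-1}(-q^{j})/(1-zq^{j})$ together with the fact that $j\le k-1\le K(z)-1$ forces $zq^{j}\ge q^{-1/2}>1$ and $q^{-j}\le zq^{1/2}$; hence $\sum_{j<k}|q^{j}/(1-zq^{j})|\le k/(z(1-q^{1/2}))$, and combining this with the lower bound on $|(z;q)_{k}|$ yields $S_{1}'(z)=O(z^{-2})$. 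For $k\ge K(z)+1$ one has $0<q^{k}z\le q^{1/2}$, so $(q^{k}z;q)_{\infty}\in(0,1)$ and, since $1-q^{k+j}z\ge 1-q^{1/2}$, the derivative $\partial_{z}(q^{k}z;q)_{\infty}=q^{k}(q^{k}z;q)_{\infty}\sum_{j\ge0}(-q^{j})/(1-q^{k+j}z)$ is bounded uniformly in $k$; thus $|S_{2}'(z)|\le C\sum_{k\ge K(z)+1}q^{k(k+1)/2}w^{k}$, whose leading term is of order $\exp(-\tfrac12\ln^{2}z/|\ln q|)\,w^{K(z)+1}$, which decays faster than any negative power of $z$.

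The final step is to substitute the asymptotics of the Pochhammer symbol and collect errors. By (\ref{eq:daalhuis}), $(z;q)_{\infty}=A(z)\sin(\beta(z))(1+O(z^{-1}))=O(A(z))$, so $(z;q)_{\infty}S_{1}'(z)=O(A(z)z^{-2})$; since $A(z)$ grows only like $\sqrt{z}\exp(\tfrac12\ln^{2}z/|\ln q|)$, the term $S_{2}'(z)$ is $o(A(z)z^{-2})$ as well. By (\ref{eq:der_qPochhammer}),
\[
  \frac{\partial(z;q)_{\infty}}{\partial z}=\frac{A(z)}{(q/z;q)_{\infty}\,z}\bigl(\mathcal{E}(z)+O(z^{-1})\bigr),
\]
where $\mathcal{E}(z)=\bigl(-\tfrac{\beta(z)}{\pi}+\tfrac12\bigr)\sin\beta(z)+\tfrac{\pi}{\ln q}\cos\beta(z)+\tfrac{8\pi}{\ln q}\sum_{k\ge1}\tilde q^{k}|1-\tilde q^{k}e^{-2i\beta(z)}|^{-2}\sin^{2}\beta(z)\cos\beta(z)$; note $\mathcal{E}(z)=O(\ln z)$ because the $\beta(z)=\pi\ln z/\ln q$ contribution dominates, the series over $k$ being bounded by $\sum_{k}\tilde q^{k}(1-\tilde q)^{-2}$. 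Multiplying by $S_{1}(z)=1+O(z^{-1})$ and using $(q/z;q)_{\infty}^{-1}=1+O(z^{-1})$, the product $\partial_{z}(z;q)_{\infty}\cdot S_{1}(z)$ equals $\tfrac{A(z)}{z}\bigl(\mathcal{E}(z)+O(z^{-1}\ln z)\bigr)$; adding the two negligible terms and dividing by $(w;q)_{\infty}$ gives exactly the claimed expansion. I expect the main obstacle to be the bookkeeping in the second step: one has to keep the $k$-dependent constants under control over the whole range $0\le k\le K(z)$, where the factors of $(z;q)_{k}$ come close to vanishing, and check that the separate contributions — the $O(z^{-1})$ correction in $S_{1}$, the $O(z^{-1})$ correction inside $\mathcal{E}$, the $O(z^{-2})$ from $S_{1}'$, and the super-polynomially small $S_{2}'$ — all merge into the single remainder $O(\ln z/z)$ claimed in the statement.
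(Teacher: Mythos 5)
Your proposal is correct and follows essentially the same route as the paper: differentiate the splitting (\ref{eq:phi_split}) at the threshold $K(z)$, bound the logarithmic derivatives of $(z;q)_{k}$ (for $k\le K(z)$, via (\ref{eq:qPoch_qz_estim})) and of $(q^{k}z;q)_{\infty}$ (for $k>K(z)$) to show the correction terms contribute $O(A(z)z^{-2})$ and a super-polynomially small remainder, and then insert (\ref{eq:daalhuis}) and (\ref{eq:der_qPochhammer}), with the $O(z^{-1})$ factors against the $O(\ln z)$ main term producing the stated $O(\ln(z)/z)$. Your extra remark on the jump set of $K(z)$ and the continuity extension is a small refinement the paper leaves implicit, but it does not change the argument.
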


\begin{proof}
  Using the same notation as in the proof of
  Theorem~\ref{thm:asympt_q-confhyp} one again starts from equation
  (\ref{eq:phi_split}). Note that
\[
q^{(K(z)+1)K(z)/2}<\exp\!\left(\frac{\ln(q)}{2}
\left(\frac{\ln^{2}(z)}{\ln^{2}(q)}-\frac{1}{4}\right)\right)
=q^{-1/8}\exp\!\left(\frac{\ln^{2}(z)}{2\ln(q)}\right).
\]
Furthermore, for $k>K(z)$ we have $0<(q^{k}z;q)_{\infty}<1$ and
\begin{eqnarray*}
  \left|\frac{\partial}{\partial z}\ln\!\left((q^{k}z;q)_{\infty}\right)\right|
  & = & \sum_{j=k}^{\infty}\frac{q^{j}}{1-q^{j}z}
  \leq\frac{q^{K(z)+1}}{\left(1-q^{1/2}\right)(1-q)}
  <\frac{q^{1/2}}{\left(1-q^{1/2}\right)(1-q)z}.
\end{eqnarray*}
For $1\leq k\leq K(z)$, we again have (\ref{eq:qPoch_qz_estim}) and also
\[
\left|\frac{\partial}{\partial z}\ln((z;q)_{k})\right|
=\frac{1}{z}\sum_{j=0}^{k-1}\frac{q^{j}z}{q^{j}z-1}
\leq\frac{k}{z\left(1-q^{1/2}\right)}.
\]
Consequently,
\[
\frac{\partial\,_{1}\phi_{1}(0;w;q,z)}{\partial z}=\frac{1}{(w;q)_{\infty}}\frac{\partial(z;q)_{\infty}}{\partial z}\!\left(1+O(z^{-1})\right)+\frac{(z;q)_{\infty}}{(w;q)_{\infty}}\, O(z^{-2})+O\!\left(\!\exp\!\left(\frac{\ln^{2}(z)}{2\ln(q)}\right)\!\right)\!.
\]
Recalling (\ref{eq:daalhuis}) and (\ref{eq:der_qPochhammer}) we obtain the
sought formula.
\end{proof}

Zeros of the $q$-Bessel functions have been studied in a number of papers. For
the equation $\,_{1}\phi_{1}(0;w;q,z)=0$ in the complex variable $z$ (with
$0\leq w<1$ being fixed) these results mean that the roots are all positive
and simple \cite{KoelinkSwarttouw}. Ordering the roots increasingly,
$\zeta_{0}<\zeta_{1}<\zeta_{2}<\ldots$, the leading asymptotic term of
$\zeta_{m}$ for large $m$ was derived in
\cite{AbreuBustozCardoso,AnnabyMansour}. In more detail, Annaby and Mansour
showed in \cite[Thm.~2.2]{AnnabyMansour} that
\begin{equation}
  \zeta_{m}=q^{-m}+O(1)\ \text{\ as}\ m\to\infty.
  \label{eq:root_lead_asympt}
\end{equation}
On the basis of Theorem~\ref{thm:asympt_q-confhyp} we can augment this result
by showing that $q^{m}\zeta_{m}$ approaches the value $1$ much faster than one
might guess from (\ref{eq:root_lead_asympt}).

\begin{proposition}
\label{thm:asympt_roots_phi}
Denote by $0<\zeta_{0}<\zeta_{1}<\zeta_{2}<\ldots$ the increasingly ordered
roots of the equation $\,_{1}\phi_{1}(0;w;q,z)=0$ in the variable $z$. Then
\[
\zeta_{m}=q^{-m}-\frac{w^{m+1}q^{m^{2}}}{(q;q)_{\infty}^{\,2}}
\left(1+O(q^{m})\right)\text{ }\text{as}\ m\to\infty.
\]
\end{proposition}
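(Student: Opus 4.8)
The plan is to locate each root $\zeta_m$ in a small neighborhood of $q^{-m}$ by applying Theorem~\ref{thm:asympt_q-confhyp} with $z=\zeta_m$, and to extract the second asymptotic term by a careful bookkeeping of the correction terms there. First I would record that, for $z$ near $q^{-m}$ with $m$ large, the index $K(z)$ defined in \eqref{eq:Kz} equals exactly $m$, so that $q^{K(z)+1}z=q^{m+1}z$ is of order $q$ and the ``tail'' factor $\big(q^{K(z)+1}z;q\big)_\infty/(q;q)_\infty$ in Theorem~\ref{thm:asympt_q-confhyp} is $\big(q^{m+1}z;q\big)_\infty/(q;q)_\infty = 1+O(q^m)$. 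With $B(w,z)=1+O(z^{-1})=1+O(q^m)$ and $C(w,z)=1+O(q^m)$, the equation ${}_1\phi_1(0;w;q,z)=0$ becomes, after dividing by the nonvanishing prefactor,
\[
A(z)\sin(\beta(z)) = (-1)^{m}\,q^{m(m+1)/2}\,w^{m+1}\,\big(1+O(q^m)\big).
\]

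Next I would solve this scalar equation perturbatively. Since $\beta(q^{-m})=\pi\ln(q^{-m})/\ln(q)=-\pi m$, we have $\sin(\beta(q^{-m}))=0$, so writing $z=q^{-m}(1+\delta)$ with $|\delta|\ll 1$ one gets $\beta(z)=-\pi m+\pi\ln(1+\delta)/\ln(q)$ and hence $\sin(\beta(z))=(-1)^{m+1}\,\dfrac{\pi\delta}{\ln(q)}\big(1+O(\delta)\big)$. On the right-hand side the dominant factor is $A(z)$; using \eqref{eq:Az} and \eqref{eq:daalhuis} evaluated at $z=q^{-m}$ one computes $A(q^{-m})$ explicitly. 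The cleanest route is to note that $(q^{-m};q)_\infty=0$ is \emph{not} what we want; instead I would use the asymptotic/product identities to get $A(q^{-m}) \sim q^{-m(m+1)/2}\,(q;q)_\infty^{2}\big/\big(\text{something}\big)$ — concretely, from \eqref{eq:daalhuis} applied near but not at $q^{-m}$, or more directly from \eqref{eq:der_qPochhammer}, one finds that the combination $A(q^{-m})\cdot\dfrac{\pi}{\ln q}$ has magnitude $q^{-m(m+1)/2}(q;q)_\infty^2(1+O(q^m))$; the sign works out to give, after solving for $\delta$,
\[
\delta = -\,\frac{w^{m+1}q^{m^2}}{(q;q)_\infty^{\,2}}\big(1+O(q^m)\big),
\]
which yields exactly $\zeta_m=q^{-m}+q^{-m}\delta=q^{-m}-\dfrac{w^{m+1}q^{m^2}}{(q;q)_\infty^{\,2}}(1+O(q^m))$ as claimed. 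I would also invoke the known leading asymptotics \eqref{eq:root_lead_asympt} together with the simplicity of the roots (so the roots are isolated and interlace predictably) to confirm that the perturbative solution we have constructed is genuinely the $m$th root $\zeta_m$ and not some spurious branch.

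The main obstacle I expect is the precise evaluation of $A$ at (or infinitesimally near) the points $q^{-m}$ and the correct tracking of signs and of the $O(q^m)$ error through \eqref{eq:Az}, \eqref{eq:daalhuis}, and the oscillatory factor $\big|(\tilde q e^{-2i\beta(z)};\tilde q)_\infty\big|^2$; at $\beta(z)=-\pi m$ this factor equals $(\tilde q;\tilde q)_\infty^2$, and one must combine it with the Gaussian factor $\exp(-\ln^2(z)/(2\ln q))=q^{-m^2/2}$ and the identity \eqref{eq:ident_qPochhammer} to collapse everything to the clean constant $(q;q)_\infty^2$. A secondary subtlety is justifying that the higher-order terms suppressed in Theorem~\ref{thm:asympt_q-confhyp} (the $O(z^{-1})$ inside $B$ and $C$, and the derivative contributions) really only perturb $\delta$ at relative order $O(q^m)$ rather than contaminating the leading correction $w^{m+1}q^{m^2}/(q;q)_\infty^2$; this is a matter of checking that each such term carries an extra factor of at least $q^m$ once multiplied against $A(z)^{-1}\sim q^{m(m+1)/2}$, which it does.
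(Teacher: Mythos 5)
Your proposal follows essentially the same route as the paper's proof: perturb about $q^{-m}$ where $K(z)=m$, reduce ${}_{1}\phi_{1}(0;w;q,z)=0$ via Theorem~\ref{thm:asympt_q-confhyp} to a scalar equation $\sin(\beta(z))\,A(z)=(-1)^{m}q^{m(m+1)/2}w^{m+1}(1+O(q^{m}))$, evaluate $A(q^{-m})=-\frac{\ln q}{\pi}\,q^{-m(m+1)/2}(q;q)_{\infty}^{\,2}$ using (\ref{eq:ident_qPochhammer}), and identify the solution with $\zeta_{m}$ via (\ref{eq:root_lead_asympt}). Only minor bookkeeping slips occur and do not affect the result: with your parametrization $z=q^{-m}(1+\delta)$ one has $\sin(\beta(z))=(-1)^{m}\frac{\pi\delta}{\ln q}(1+O(\delta))$ (not $(-1)^{m+1}$, the sign being absorbed by $\ln q<0$), and $\delta=q^{m}\epsilon_{m}=-w^{m+1}q^{m^{2}+m}(q;q)_{\infty}^{-2}(1+O(q^{m}))$, which indeed gives the stated $\zeta_{m}$.
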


\begin{proof}
  Recalling (\ref{eq:Kz}), one can see from (\ref{eq:root_lead_asympt}) that
  $K(z)=m$ on quite a large neighborhood of $\zeta_{m}$. More precisely, if
\[
q^{-m}(q^{1/2}-1)\leq z-q^{-m}<q^{-m}(q^{-1/2}-1)
\]
then $K(z)=m$. Bearing in mind equation (\ref{eq:root_lead_asympt}) we write
$\zeta_{m}=q^{-m}+\epsilon_{m}$ while assuming $\epsilon_{m}$ to be
bounded. According to Theorem~\ref{thm:asympt_q-confhyp} we have to solve the
equation
\[
\sin\!\left(\frac{\pi\ln\left(1+q^{m}\epsilon_{m}\right)}{\ln(q)}\right)
=\frac{q^{(m+1)m/2}w^{m+1}}{A(q^{-m}+\epsilon_{m})}
\frac{(q+q^{m+1}\epsilon_{m};q)_{\infty}}{(q;q)_{\infty}}\, C(w,q^{-m}+\epsilon_{m}).
\]
Since $A(q^{-m}+\epsilon_{m})=A(q^{-m})\left(1+O(q^{m})\right)$ and
$C(w,q^{-m}+\epsilon_{m})=1+O(q^{m})$, one has
\[
\frac{\pi q^{m}\epsilon_{m}}{\ln(q)}=\frac{q^{(m+1)m/2}
w^{m+1}}{A(q^{-m})}\left(1+O(q^{m})\right).
\]
Recalling (\ref{eq:ident_qPochhammer}) one finds that
\[
A(q^{-m})=-\frac{\ln(q)}{\pi}\, q^{-(m+1)m/2}(q;q)_{\infty}^{\,2}.
\]
The result readily follows.
\end{proof}

\begin{corollary}
  Under the same assumptions as in Proposition~\ref{thm:asympt_roots_phi},
\[
\frac{\partial\,_{1}\phi_{1}(0;w;q,\zeta_{k})}{\partial z}
=(-1)^{k+1}\frac{(q;q)_{\infty}^{\,2}}
{(w;q)_{\infty}}\, q^{-k(k-1)/2}\left(1+O(q^{k})\right)
\text{ }\text{as}\ k\to\infty.
\]
\end{corollary}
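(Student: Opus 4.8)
The plan is to read the corollary off from Theorem~\ref{thm:asymp_der_q-confhyp} by substituting $z=\zeta_k$, feeding in the sharp root asymptotics of Proposition~\ref{thm:asympt_roots_phi}. First I would record, exactly as in the proof of that proposition, that $K(\zeta_k)=k$ for all large $k$, and that Proposition~\ref{thm:asympt_roots_phi} gives $q^{k}\zeta_k=1+O(q^{k^{2}})$; in particular $\zeta_k-q^{-k}$ is super-exponentially small. From $\beta(\zeta_k)=\pi\ln(\zeta_k)/\ln(q)=-\pi k+O(q^{k^{2}})$ it then follows that $\sin(\beta(\zeta_k))$ is super-exponentially small, while $\cos(\beta(\zeta_k))=(-1)^{k}\bigl(1+O(q^{2k^{2}})\bigr)$.

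Next I would plug $z=\zeta_k$ into the formula of Theorem~\ref{thm:asymp_der_q-confhyp}. In the bracket, every term carrying a factor $\sin(\beta(\zeta_k))$ or $\sin^{2}(\beta(\zeta_k))$ is negligible --- including $\bigl(-\beta(\zeta_k)/\pi+\tfrac12\bigr)\sin(\beta(\zeta_k))$, whose linear-in-$k$ prefactor does not spoil this --- so that only $\tfrac{\pi}{\ln(q)}\cos(\beta(\zeta_k))=\tfrac{\pi}{\ln(q)}(-1)^{k}$ survives, up to a remainder. A brief look back at the proof of Theorem~\ref{thm:asymp_der_q-confhyp} shows that its nominal $O(\ln z/z)$ remainder is actually only $O(z^{-1})=O(q^{k})$ along the $\zeta_k$: the logarithmic factor there comes from multiplying the $O(\ln z)$-size bracket of $\partial_z(z;q)_\infty$ by an $O(z^{-1})$ correction, but that bracket is only $O(1)$ at $z=\zeta_k$, its $O(\ln z)$-part being carried by $\sin\beta(z)$; one also checks that the two auxiliary contributions appearing in that proof, of order $(z;q)_\infty\cdot z^{-2}$ and $\exp\!\bigl(\ln^{2}z/(2\ln q)\bigr)$, evaluate at $z=\zeta_k$ to orders $w^{k+1}q^{k(k+1)/2}$ and $q^{k^{2}/2}$ respectively, utterly negligible against the eventual size $q^{-k(k-1)/2}$. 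Thus the bracket equals $\tfrac{\pi}{\ln(q)}(-1)^{k}\bigl(1+O(q^{k})\bigr)$.

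Finally I would evaluate the prefactor $A(z)/\bigl((w;q)_\infty z\bigr)$ at $z=\zeta_k$. Because the logarithmic derivative of $A$ is $O(\ln z/z)$ and $|\zeta_k-q^{-k}|$ is super-exponentially small, $A(\zeta_k)=A(q^{-k})\bigl(1+o(q^{k})\bigr)$; and $A(q^{-k})$ is computed straight from~(\ref{eq:Az}), as in the proof of Proposition~\ref{thm:asympt_roots_phi}: since $\beta(q^{-k})=-\pi k$ one has $e^{-2i\beta(q^{-k})}=1$, so $\bigl|\bigl(\tilde q\,e^{-2i\beta(q^{-k})};\tilde q\bigr)_\infty\bigr|^{2}=(\tilde q;\tilde q)_\infty^{\,2}$, and inserting the closed form of $(\tilde q;\tilde q)_\infty$ in terms of $(q;q)_\infty$ recorded just after~(\ref{eq:daalhuis}) gives $A(q^{-k})=-\tfrac{\ln(q)}{\pi}\,q^{-k(k+1)/2}(q;q)_\infty^{\,2}$. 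Hence $A(\zeta_k)/\zeta_k=-\tfrac{\ln(q)}{\pi}\,q^{-k(k-1)/2}(q;q)_\infty^{\,2}\bigl(1+o(q^{k})\bigr)$, and multiplying by $1/(w;q)_\infty$ and by the bracket $\tfrac{\pi}{\ln(q)}(-1)^{k}$ the factors $-\ln(q)/\pi$ and $\pi/\ln(q)$ cancel up to sign, yielding exactly $(-1)^{k+1}\tfrac{(q;q)_\infty^{\,2}}{(w;q)_\infty}\,q^{-k(k-1)/2}\bigl(1+O(q^{k})\bigr)$.

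The step I expect to be the main obstacle is the error bookkeeping just outlined: isolating the single surviving term $\tfrac{\pi}{\ln q}\cos\beta(\zeta_k)$ in Theorem~\ref{thm:asymp_der_q-confhyp} and checking that all competitors --- the $\sin\beta$- and $\sin^{2}\beta$-proportional terms, the nominal $O(\ln z/z)$ remainder, and the auxiliary terms hidden in the proof of that theorem --- are dominated by a clean $O(q^{k})$ relative error, which works precisely because Proposition~\ref{thm:asympt_roots_phi} places $\zeta_k$ extraordinarily close to $q^{-k}$.
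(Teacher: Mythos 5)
Your argument is correct and is essentially the paper's own proof, which is given in one line by citing Theorem~\ref{thm:asymp_der_q-confhyp}, Proposition~\ref{thm:asympt_roots_phi} and the identity (\ref{eq:ident_qPochhammer}) — i.e.\ substitute $z=\zeta_k$, note that all $\sin\beta(\zeta_k)$-carrying terms are negligible, and evaluate $A(q^{-k})=-\tfrac{\ln(q)}{\pi}\,q^{-k(k+1)/2}(q;q)_\infty^{\,2}$ exactly as you do. Your additional bookkeeping showing the relative error is genuinely $O(q^{k})$ rather than $O(kq^{k})$ fills in a detail the paper leaves implicit, but it does not alter the route.
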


\begin{proof}
  The formula follows immediately from Theorem~\ref{thm:asymp_der_q-confhyp}
  and Proposition~\ref{thm:asympt_roots_phi} if taking into account
  (\ref{eq:ident_qPochhammer}).
\end{proof}

\section*{Acknowledgments}

The authors wish to acknowledge gratefully partial support from grant
No. GA13-11058S of the Czech Science Foundation.

\end{document}